\definecolor{darkred}{HTML}{880000}
\definecolor{darkblue}{HTML}{000088}
\renewcommand{\leq}{\leqslant}
\renewcommand{\geq}{\geqslant}
\renewcommand{\tilde}{\widetilde}
\renewcommand{\hat}{\widehat}
\newcommand{\eps}{\varepsilon}
\newcommand{\dd}{{\mathrm{d}}}
\newcommand{\Var}{{\rm Var}}
\newcommand{\rdiv}{{\rm div}}
\newcommand{\sfk}{\mathsf{k}}
\newcommand{\sfp}{\mathsf{p}}
\newcommand{\sfP}{\mathsf{P}}
\newcommand{\NN}{\mathsf{NN}}
\newcommand{\E}{\mathbb E}
\newcommand{\N}{\mathbb N}
\newcommand{\p}{\mathbb P}
\newcommand{\R}{\mathbb R}
\newcommand{\Z}{\mathbb Z}
\newcommand{\1}{\mathbbm 1}
\newcommand{\sL}{\mathscr{L}}
\newcommand{\bj}{{\mathbf j}}
\newcommand{\bk}{{\mathbf k}}
\newcommand{\be}{{\mathbf e}}
\newcommand{\bp}{{\mathbf p}}
\newcommand{\cA}{{\mathcal{A}}}
\newcommand{\cD}{{\mathcal{D}}}
\newcommand{\cE}{{\mathcal{E}}}
\newcommand{\cF}{{\mathcal{F}}}
\newcommand{\cG}{{\mathcal{G}}}
\newcommand{\cH}{{\mathcal{H}}}
\newcommand{\cL}{{\mathcal{L}}}
\newcommand{\cN}{{\mathcal{N}}}
\newcommand{\cO}{{\mathcal{O}}}
\newcommand{\cR}{{\mathcal{R}}}
\newcommand{\cS}{{\mathcal{S}}}
\newcommand{\cT}{{\mathcal{T}}}
\newcommand{\cW}{{\mathcal{W}}}
\newcommand{\floor}[1]{{\lfloor #1 \rfloor}}
\def\argmin{\operatornamewithlimits{argmin}}
\newcommand{\gelu}{\mathrm{GELU}}
\newcommand{\integral}[1]{\int\limits_{#1}}
\newcommand{\tr}{\mathop{\mathrm{Tr}}\limits}
\newtheorem{Th}{Theorem}[section]
\newtheorem{Lem}[Th]{Lemma}
\newtheorem{Def}[Th]{Definition}
\newtheorem{Rem}[Th]{Remark}
\newtheorem{As}[Th]{Assumption}
\newcommand{\myendproof}{\hfill$\square$}
\title{Implicit score matching meets denoising score matching:\\
improved rates of convergence and log-density Hessian estimation}
\author{
Konstantin Yakovlev\thanks{HSE University, Russian Federation, kdyakovlev@hse.ru}
\and
Anna Markovich\thanks{HSE University, Russian Federation, aamarkovich@hse.ru}
\and
Nikita Puchkin\thanks{HSE University, Russian Federation, npuchkin@hse.ru}
}
\date{}
\begin{document}

\maketitle

\begin{abstract}
    We study the problem of estimating the score function using both implicit score matching and denoising score matching.
    Assuming that the data distribution exhibiting a low-dimensional structure, we prove that implicit score matching is able not only to adapt to the intrinsic dimension, but also to achieve the same rates of convergence as denoising score matching in terms of the sample size.
    Furthermore, we demonstrate that both methods allow us to estimate log-density Hessians without the curse of dimensionality by simple differentiation. This justifies convergence of ODE-based samplers for generative diffusion models.
    Our approach is based on Gagliardo-Nirenberg-type inequalities relating weighted $L^2$-norms of smooth functions and their derivatives.
\end{abstract}

\bigskip

\section{Introduction}

Given i.i.d. samples $Y_1, \dots, Y_n \in \R^D$ drawn from an absolutely continuous distribution with a density $\sfp^*$, we are interested in estimation of the log-density gradient $s^*(y) = \nabla \sfp(y)$ also referred to as score function.
This task arises naturally in generative modelling, where a learner has to deal with intractable densities and must use either classical algorithms (such as Langevin \citep{cheng2017underdamped} or Hamiltonian Monte Carlo \citep{neal2011mcmc}) or modern score-based diffusion models \citep{song2019generative, song20a, song2021scorebased} to produce a new sample.
Besides, the problem of interest can be considered as a particular case of density estimation where the performance is measured by the Fisher divergence.
The advantages of the Fisher divergence were discussed, for instance, by \cite{sriperumbudur17}.
For these reasons, statisticians have paid a lot of attention to theoretical analysis of various score estimation methods including kernel-based approaches \citep{li05, wibisono2024optimal, zhang2024minimax}, Stein's method \citep{li2018gradient, shi2018spectral}, implicit score matching \citep{hyvarinen2005estimation,hyvarinen2007extensions,sriperumbudur17, sutherland2018efficient, sasaki18mode, zhou2020nonparametric, koehler2023statistical}, and denoising score matching \citep{oko2023diffusion, tang2024adaptivity, azangulov2024convergence, yakovlev2025generalization}.

In the present paper, we examine denoising \citep{vincent2011connection} and implicit \citep{hyvarinen2005estimation} score matching. The former algorithm is the cornerstone for learning score-based diffusion models \citep{song2021scorebased}. Many advances on score estimation with deep feedforward neural networks appeared in few recent years. In \cite{oko2023diffusion}, the authors considered a nonparametric setup, where the target density belongs to the Besov space $B^\beta_{p, q}([-1, 1]^D)$. Assuming that $\sfp^*$ is bounded away from zero, they showed\footnote{According to \cite{yakovlev2025generalization}, the analysis of estimation error in \cite{oko2023diffusion} has a flaw (in particular, the issue occurs in the proof of Theorem C.4, see \citep[page 9]{yakovlev2025generalization} for the discussion). However, following the approach of \cite{yakovlev2025generalization}, one can obtain the rates of convergence (with respect to the sample size $n$) announced by \cite{oko2023diffusion} with slightly worse dependence on the stopping time $\underline T$.} that denoising score matching can achieve the expected squared error of order $\cO(n^{-2\beta / (2\beta + D)})$. While this rate of convergence is common for nonparametric statistics, it obviously suffers from the curse of dimensionality when $D = \Omega(\log n)$ (as well as kernel-based approaches \cite{wibisono2024optimal, zhang2024minimax}).
Fortunately, real-world data sets, such as high-resolution images, often have intrinsic low-dimensional structures \citep{bengio2013representation,pope2021the}.
To mitigate the curse of dimensionality, \cite{chen2023score} suggested a model with a target distribution supported on a low-dimensional linear subspace and derived the score estimation rates that do not deteriorate fast when the ambient dimension is large.
However, this setup looks oversimplified and poorly reflects non-linear structures in the read-world data. This issue was addressed in subsequent works \citep{tang2024adaptivity, azangulov2024convergence}, where the authors assumed that $\sfp^*$ is supported on a smooth submanifold of dimension $d < D$. Under some technical assumptions, they proved\footnote{The papers of \cite{tang2024adaptivity, azangulov2024convergence} inherit the mistake of \cite{oko2023diffusion}, as their proofs rely on Theorem C.4, which has a flaw. Similarly to \cite{oko2023diffusion}, one can obtain the rates of convergence (with respect to the sample size $n$) announced in these papers (with slightly worse dependence on the stopping time) following the approach of \cite{yakovlev2025generalization}.} that the rate of convergence for for denoising score matching in this setup is determined by the intrinsic dimension $d$, rather than the ambient one. Despite a significant progress in understanding score-based diffusion models, both \cite{tang2024adaptivity} and \cite{azangulov2024convergence} required $\sfp^*$ to be bounded away from zero. According to \cite{zhang2024minimax}, this may be too demanding. In \cite{yakovlev2025generalization}, the authors bypassed the restrictive assumptions of density lower bounds and bounded data distribution support, thereby capturing a broader and more realistic class of data distributions.

On the other hand, the ability of implicit score matching to adapt to low intrinsic dimension (as in the setups of \cite{tang2024adaptivity, azangulov2024convergence, yakovlev2025generalization}) is much less explored.
In \cite{sriperumbudur17,sutherland2018efficient,zhou2020nonparametric}, the authors studied nonparametric score estimation using reproducing kernel Hilbert space (RKHS). In addition, \cite{sasaki18mode} use similar machinery to derive the estimation error bounds for the score function and its derivatives under analogous conditions.
Of course, it is known that diffusion kernels can keep information about underlying manifold structure, and some popular dimension reduction techniques, such as diffusion maps \cite{coifman06} or Laplacian eigenmaps \cite{belkin03}, are based on this fact. However, this property mostly holds in the situation when the data points lie exactly on the manifold or in its very small vicinity. For this reason, even if the kernel is chosen properly, the rates of convergence in \cite{sriperumbudur17,sutherland2018efficient,sasaki18mode,zhou2020nonparametric} may significantly deteriorate in noisy setups, which are common in generative diffusion models (see, for example, \citep{tang2024adaptivity, azangulov2024convergence, yakovlev2025generalization}). A similar issue appears in \cite{shen2024differentiable}, where the authors assumed that
the target density is supported on a thin tube around a low-dimensional manifold. They leave the case of unbounded support beyond the scope and, moreover, require the tube to shrink as the sample size grows. This leads us to the following question.

\begin{quote}
    \emph{
    \textbf{Q1}:
    Can a score estimator trained with an implicit score matching objective escape the curse of dimensionality in the noisy setup (for instance, as in \cite{yakovlev2025generalization})? If so, can it achieve the same rates of convergence as denoising score matching?
    }
\end{quote}

Besides score estimation, there is also an important line of research devoted to inference with generative diffusion models.
Researchers have made much effort to study iteration complexity of SDE- and ODE-based samplers (see, for instance, \citep{de_bortoli2022convergence, chen2023sampling, chen2023improved, li2024towards, benton2024nearly, huang24faster, li2024adapting}). It turns out that deterministic samplers often require accurate estimates of both the score function and its Jacobian matrix (that is, the log-density Hessian) \citep{li2024accelerating, li2024towards, li2024sharp, huang2025fast, li2025faster}. Sometimes, a faster inference demands the score estimate to be Lipschitz \citep{zhang2025sublinear}. The drawbacks of the approach of \cite{shen2024differentiable} and of the RKHS-based methods discussed in the previous paragraph are also valid for the log-density Hessian estimation. This brings us to another question.

\begin{quote}
    \emph{
    \textbf{Q2}:
    Can we estimate the score Jacobian matrix without the curse of dimensionality?
    }
\end{quote}

In what follows, we give a positive answer to this question. Moreover, we show that it is enough to take derivatives of denoising and implicit score matching estimates for this purpose.

\medskip
\noindent
\textbf{Our contribution.}\quad
In this paper, motivated by the aforementioned research questions, we investigate the properties of feedforward neural networks with smooth activation function \citep{yakovlev2025simultaneous} in estimating both the score function and its Jacobian matrix without the curse of dimensionality under the noisy setting as in \cite{yakovlev2025generalization}.
Our main contributions are summarized below.

\begin{enumerate}
    \item 
    We demonstrate that implicit score matching can surpass the curse of dimensionality and achieve the same rates of convergence as denoising score matching (in terms of the sample size).
    \item We further show that the Jacobian matrix of the true score function can be estimated without the curse of dimensionality by differentiating the outputs of either denoising or implicit score matching.
    \item Our main results rely on extensions of the Gagliardo-Nirenberg inequality for the space $L^2(\sfp^*)$. 
    The first inequality reveals favourable geometry of the implicit score matching loss landscape and results in sharper rates of convergence.
    The second inequality links $L^2(\sfp^*)$-norm of a smooth function with its Sobolev seminorm $W^{1,2}(\sfp^*)$ and yields an upper bound on the estimation error of the score Jacobian matrix.
    \item As a byproduct, we develop a tail inequality for suprema of unbounded empirical processes, building upon \cite{adamczak2008tail} and \cite{bartlett2005local}.
    Our new machinery extends the standard localization technique to the unbounded setting, thereby eliminating the need for an $\varepsilon$‑net argument and simplifying the analysis while maintaining the sharpness of the resulting convergence rates.
\end{enumerate}

\medskip
\noindent
\textbf{Paper structure.}\quad
The remainder of the paper is organized as follows.
Section \ref{sec:prelim_notation} introduces the necessary preliminaries and notation.
Our main results are presented in Section \ref{sec:results}.
Section \ref{sec:estimation_proof} contains the proof of Theorem \ref{th:estimation} on the statistical efficiency of implicit score matching.
The Appendix includes auxiliary results and deferred proofs.

\medskip
\noindent
\textbf{Notation.}\quad
We use the following notations throughout the paper.
For real numbers $x$ and $y$, we define $x \vee y = \max\{x, y\}$ and $x \wedge y = \min\{x, y\}$. The condition $f \lesssim g$ and $g \gtrsim f$ means $f = \cO(g)$.
Moreover, the notation $f \asymp g$ indicates that $f \lesssim g$ and $g \lesssim f$.
Finally, for any function class $\cF$ equipped with a metric $\rho$, we denote its diameter (with respect to $\rho$) by $\cD(\cF, \rho)$:
\[
    \cD(\cF, \rho) = \sup\limits_{f, g \in \cF} \rho(f, g).
\]

\section{Preliminaries and notations}
\label{sec:prelim_notation}

\noindent
\textbf{Multi-index notation.}\quad
We denote the set of non-negative integers by \(\Z_+\) and for a multi-index \(\bk = (k_1, k_2, \dots, k_m)\in\Z_+^m\) we write
\begin{align*}
    |\bk| = k_1 + k_2 + \ldots + k_m,
    \quad
    \bk! = k_1! \cdot k_2! \cdot \ldots \cdot k_m!
\end{align*}
For \(\bk, \bj \in \Z^m_+\) notation \(\bk  \geq \bj, \bk \leq \bj\) stands for element-wise comparison.
In addition, summation and subtraction are also defined element-wise: $\bk \pm \bj = (k_1 \pm j_1, k_2 \pm j_2, \dots, k_m \pm j_m)$.
We also define an indicator function as $\1[\bk = \bj]$, which is one if $\bj = \bk$ and zero otherwise.
Let \(f: \Omega\to\R^r\) be an arbitrary function defined on a set \(\Omega \subseteq \R^m\). For a multi-index \(\bk \in \Z_+^r\), we define the corresponding partial differential operator $\partial^\bk$ as a component-wise partial derivative
\begin{align*}
    \partial^\bk f(x) =  (\partial^{\bk} f_1(x), \dots, \partial^{\bk} f_m(x))^\top, \quad \partial^{\bk} f_i(x) = \frac{\partial^{|\bk|} f_i}{\partial x_1^{k_1} \ldots \partial x_r^{k_r}}, \quad i \in \{1, \dots, m\}.
\end{align*}
The divergence of $f : \R^r \to \R^r$, denoted as $\rdiv[f](x)$, is given by
\begin{align*}
    \rdiv[f](x) = \sum_{i = 1}^r \frac{\partial f_i(x)}{\partial x_i}.
\end{align*}
The Jacobian matrix of $f$ is represented by $\nabla f(x)$.

\noindent
\textbf{Norms.}\quad
We denote the Euclidean vector norm by \(\|\cdot\|\). 
For a vector \(v\) and a matrix \(A\) we use  \( \|v\|_\infty\) and \(\|A\|_\infty\) respectively for maximal absolute values of their entries.
Similarly,  \(\|v\|_0\) and \(\|A\|_0\) stand for the number of non-zero entries of \(v\) and \(A\).
Moreover, we denote by $\|A\|_F$ and $\|A\|$ the Frobenius and the spectral norm of a matrix $A$, respectively.
For a vector-valued \(\mu\)-measurable function \(f: \Omega \to \R^m\) we use 
\begin{align*}
    \|f\|_{L^p(\Omega)}
    = \left\{\integral{\Omega} \|f(x)\|^p \, \dd \mu(x) \right\}^{1 / p},
    \quad
    \|f\|_{L^\infty(\Omega)} = \sup_{x\in\Omega} \|f(x)\|.
\end{align*}
Similarly, for a non-negative weighting function $\sfp : \Omega \to \R$, we define the weighted $L^p$-norm and inner product as
\begin{align*}
    \|f\|_{L^p(\Omega, \sfp)} = \left\{\integral{\Omega} \|f(x)\|^p \, \sfp(x) \dd \mu(x) \right\}^{1 / p},
    \quad
    \langle f, g \rangle_{(\Omega, \sfp)} = \integral{\Omega} f(x)^\top g(x) \; \sfp(x) \dd \mu(x) .
\end{align*}
When $\Omega$ is clear from context, we will write $\|f\|_{L^p(\sfp)}$ and $\langle f, g\rangle_\sfp$ correspondingly.
Finally, for a real-valued random variable $\xi$ its $\psi_1$-norm is defined by
\begin{align}
\label{eq:psi_1_norm_def}
    \|\xi\|_{\psi_1} = \inf\{t > 0 \, : \, \E \exp\left\{ |\xi| / t \right\} \leq 2 \} .
\end{align}

\medskip
\noindent
\textbf{Neural networks.}\quad
In this paper we use a Gaussian error linear unit activation function, given by
\begin{align}
    \label{eq:gelu_def}
    \gelu(x) = x\cdot\Phi(x), \quad \Phi(x) = \frac{1}{\sqrt{2\pi}}\integral{-\infty}^x e^{-t^2 / 2} \, \dd t,
    \quad x \in \R .
\end{align}
The shifted activation function, $\gelu_b(x)$ for $b = (b_1, \dots, b_r)$, maps a vector $x$ from $\R^r$ to $\R^r$ as follows:
\begin{align*}
    \gelu_b(x) = \big( \gelu(x_1 - b_1), \dots, \gelu(x_r - b_r) \big), \quad x = (x_1, \dots, x_r) \in \R^r .
\end{align*}
For $L \in \N$ and an architecture vector $W = (W_0, W_1, \dots, W_L) \in \N^{L + 1}$, a feedforward neural network of depth $L$ and architecture $W$ is a function $f : \R^{W_0} \to \R^{W_L}$ defined by the composition:
\begin{align}
    \label{eq:feed_forward_nn_def}
    f(x) = -b_L + A_L \circ \gelu_{b_{L - 1}} \circ A_{L - 1} \circ \gelu_{b_{L - 2}} \circ \ldots \circ A_2 \circ \gelu_{b_1} \circ A_1 \circ x ,
\end{align}
where $A_j \in \R^{W_{j} \times W_{j - 1}}$ is a weight matrix and $b_j \in \R^{W_j}$ is a bias vector for each $j \in \{1, \dots, L\}$.
The maximum number of neurons in each layer is defined as $\|W\|_\infty$, representing the width of the network.
We define the class of neural networks of the form \eqref{eq:feed_forward_nn_def} with at most $S \in \N$ non-zero weights and the weight magnitude bounded by $B > 0$ as follows:
\begin{align}
    \label{eq:nn_class_def}
    \NN(L, W, S, B) = \Bigg\{
    &\notag
    \text{$f$ of the form \eqref{eq:feed_forward_nn_def}}:
    \sum_{j = 1}^L \big( \|A_j\|_0 + \|b_j\|_0 \big) \leq S,
    \\&
    \qquad\qquad\quad
    \text{and } \max_{1 \leq j \leq L} \left\{ \|A_j\|_\infty \vee \|b_j\|_\infty \right\} \leq B \Bigg\}.
\end{align}

\medskip
\noindent
\textbf{Smoothness classes.}\quad
For any \(r, m, s \in \N\) and \(\Omega \subseteq \R^r\) let  $C^s(\Omega)$ be the space of functions $f : \Omega \to \R^m$ with bounded and continuous derivatives up to order $s$. Formally, we define 
\begin{align*}
    C^s(\Omega) = \left\{f : \Omega \to \R^m : \max_{\bk \in \Z_+^r, \; |\bk| \leq s} \|\partial^\bk f\|_{L^\infty(\Omega)} < \infty \right\}.
\end{align*}
For any \(\Omega \subseteq \R^r, r \in \N\) and any positive \(0 \leq \delta \leq 1\) a function \(f: \Omega \to \R\) is called \(\delta\)-Hölder continuous, if
\begin{align*}
    [f]_\delta = \sup_{\substack{x, y \in\Omega\\ x \neq y}} \frac{|f(x) - f(y)|}{1 \wedge \|x-y\|_\infty^\delta} < \infty.
\end{align*}

\begin{Def}[Hölder class]\label{def:holder_class}
    For given $\beta > 0$ we let $\floor{\beta}$ to be the largest integer strictly less than $\beta$. The Hölder class \(\cH^\beta(\Omega, \R^m)\) contains functions from $C^\floor{\beta}(\Omega)$, such that their derivatives of order $\floor{\beta}$ are $(\beta - \floor{\beta})$-Hölder-continuous. For a \(H > 0\) a Hölder ball \(\cH^\beta(\Omega, R^m, H)\) is given by
    \begin{align*}
        \cH^\beta(\Omega, \R^m, H) = \left\{f \in C^\floor{\beta}(\Omega) : \max_{1\leq i\leq m}\max_{\substack{\bk \in \Z_+^r \\ |\bk| \leq \floor{\beta}}} \|\partial^\bk f_i\|_{L^\infty(\Omega)} \leq H,
        \; \max_{1\leq i\leq m}\max_{\substack{\bk \in \Z_+^r \\ |\bk| = \floor{\beta}}} [\partial^\bk f_i]_{\beta - \floor{\beta}} \leq H \right\} .
    \end{align*}
\end{Def}

\begin{Def}[Sobolev space]
\label{def:sobolev_space}
    Given an open set $\Omega \subseteq \R^r$ for some $r \in \N$ the Sobolev space $W^{k, p}(\Omega)$ with $k\in \Z_+$ and $1 \leq p \leq \infty$ is defined as
    \begin{align*}
        W^{k, p}(\Omega) = \{f \in L^p(\Omega) : \partial^\bk f \in L^p(\Omega) \quad \text{for every $\bk \in \Z_+^r$ with $|\bk| \leq k$} \},
    \end{align*}
    where $L^p(\Omega)$ is the Lebesgue space.
    In what follows, we assume that $p < \infty$.
    For each $l \in \{0, 1, \dots, k\}$, define the Sobolev seminorms on $W^{k, p}(\Omega)$ as
    \begin{align*}
        |f|_{W^{l, p}(\Omega)} = \left\{\sum_{\substack{\bk \in \Z_+^r,\; |\bk| = l}}\|\partial^\bk f\|^p_{L^p(\Omega)}\right\}^{1/p},
        \quad |f|_{W^{l, \infty}(\Omega)} = \max_{\substack{\bk \in \Z_+^r, \; |\bk| = l}}\|\partial^\bk f\|_{L^\infty(\Omega)} .
    \end{align*}
    We also introduce the Sobolev norm on $W^{k, p}(\Omega)$ based on the seminorm
    \begin{align*}
        \|f\|_{W^{k, p}(\Omega)} = \left\{\sum_{0 \leq m \leq k}|f|_{W^{m, p}(\Omega)}^p\right\}^{1 / p}, \quad \|f\|_{W^{k, \infty}(\Omega)} = \max_{0 \leq m \leq k}|f|_{W^{m, \infty}(\Omega)}.
    \end{align*}
    Now let $\sfp : \Omega \to \R$ be a non-negative weight function.
    The weighted Sobolev space $W^{k, p}(\Omega, \sfp)$ consists of all functions $f : \Omega \to \R$ for which the weighted Sobolev seminorm
    \begin{align*}
        |f|_{W^{l, p}(\Omega, \sfp)} = \left\{\sum_{\bk \in \Z_+^r, \, |\bk| = l}\int_\Omega \|\partial^\bk f\|_{L^p(\Omega, \sfp)}^p \right\}^{1 / p}, \quad l \in \{0, 1, \dots, k\} .
    \end{align*}
    is finite.
    In addition, for vector-valued functions $f = (f_1, \dots, f_r) : \Omega \to \R^r$ the definition applies componentwise, i.e. $f \in W^{k, p}(\Omega, \sfp)$ if and only if $f_i \in W^{k, p}(\Omega, \sfp)$ for all $1 \leq i \leq r$.
    The corresponding Sobolev seminorm is defined as follows:
    \begin{align*}
        |f|_{W^{l, p}(\Omega, \sfp)}
        = \left\{ \sum_{i = 1}^r |f_i|^p_{W^{l, p}(\Omega, \sfp)} \right\}^{1/p},
        \quad l \in \{0, 1, \dots, k\} .
    \end{align*}
    When the domain $\Omega$ is clear from the context, we simply write $W^{k, p}(\sfp)$.
\end{Def}

\medskip
\noindent
\textbf{Hermite polynomials.}\quad
Probabilist's Hermite polynomial \(H_k(x)\) for \(x\in\R\) and \(k\in\Z_+\) is given by
\begin{align*}
    H_k(x) = (-1)^k\exp\left\{\frac{x^2}{2}\right\} \frac{\dd^k}{\dd x^k} \exp\left\{-\frac{x^2}{2}\right\}.
\end{align*}
For some \(r\in\Z_+\) and for a multi-index  \(\bk= (k_1, \dots, k_r)\in\Z_+^r\) \textit{multivariate Hermite polynomial} \(\cH_\bk\) is defined as 
\begin{align*}
    \cH_\bk(x) = H_{k_1}(x_1)\cdot \ldots \cdot H_{k_r}(x_r), \quad  x\in\R^r.
\end{align*}
To avoid confusion with the Hölder function class, subscript indexing is preserved for Hermite polynomials.
Let
\[
    \phi_r(x) = \frac{e^{-\|x\|^2 / 2}}{\sqrt{2\pi}} 
\]
stand for the standard Gaussian measure on $\R^r$.
It is known that univariate Hermite polynomials form a complete orthogonal system in $L^2(\R, \phi_1)$ (see \cite[Theorem 5.7.1]{szego1975orthogonal}).
Furthermore, Hermite functions $\{\cH_\bk \, : \, \bk \in \Z_+^r\}$ are complete in $L^2(\R^r, \phi_r)$
(see \cite[Proposition 1]{bongioanni2006sobolev} and \cite{stempak2003poisson}).
In addition, for any $\bk, \bj, \bp \in \Z_+^r$ such that $\bp \leq \bk$, we have
\begin{align*}
    \left\langle \cH_\bk, \cH_\bj \right\rangle_{\phi_r} = \bk! \cdot \1[\bk=\bj],
    \quad \partial^\bp \cH_\bk = \frac{\bk!}{(\bk-\bp)!} \cH_{\bk-\bp} .
\end{align*}

\medskip
\noindent
\textbf{Implicit score matching.}\quad
Let $Y \sim \sfp^*$, where $\sfp^*$ is a probability distribution on $\R^D$.
To avoid confusion, $\sfp^*$ denotes its probability density function.
The \emph{score function} of $\sfp^*$ is given by $s^*(x) = \nabla \log \sfp^*(x)$ for any $x \in \R^D$.
Implicit score matching aims to minimize
\begin{align*}
    \min_{s \in \cS} \E \, \|s(Y) - s^*(Y)\|^2,
\end{align*}
where $\cS$ is the class of score estimators, which will be specified later in the paper.
Since $s^*$ is unknown, \cite{hyvarinen2005estimation} proposed replacing the objective with $\E \ell(s, Y)$, where
\begin{equation}
    \label{eq:ism_loss}
    \ell(s, Y) = \frac{1}{2}\|s(Y)\|^2 + \rdiv[s](Y).
\end{equation}
Under regularity conditions (satisfied in our setting), integration by parts yields
\begin{align*}
    \frac{1}{2}\E \|s(Y) - s^*(Y)\|^2
    = \E \ell(s, Y) + \frac{1}{2}\E \|s^*(Y)\|^2  
\end{align*}
for any $s \in \cS$.
Consequently, for all $s \in \cS$, we have that
\begin{align}
\label{eq:ism_loss_diff_fisher}
    \frac{1}{2}\E \|s(Y) - s^*(Y)\|^2 = \E \ell(s, Y) - \E \ell(s^*, Y).
\end{align}
Given i.i.d. samples $Y_1, \dots, Y_n$ from $\sfp_*$, the implicit score matching estimator is the empirical risk minimizer
\begin{align}
\label{eq:ism_erm}
    \hat{s} = \argmin_{s \in \cS} \left\{ \frac{1}{n}\sum_{i = 1}^n \ell(s, Y_i) \right\} .
\end{align}

\medskip
\noindent
\textbf{Denoising score matching.}\quad
Given the Ornstein-Uhlenbeck forward process defined by
\begin{align}
\label{eq:ou_forward}
    \dd X_t = -X_t \, \dd t + \sqrt{2} \, \dd B_t,
    \quad t \in [0, T],
\end{align}
with initial condition $X_0 \sim \sfp^*$.
Let $\sfp_t^*$ denote the probability density function and the distribution of $X_t$ to avoid ambiguity.
Under mild regularity conditions \citep{anderson1982reverse}, the corresponding reverse process satisfies
\begin{align}
\label{eq:ou_backward}
    \dd Z_t = \big(Z_t + 2 \nabla\log \sfp_{T - t}^*(Z_t) \big) \dd t + \sqrt{2} \, \dd B_t,
    \quad t \in [0, T],
\end{align}
with $Z_0 \sim \sfp_T^*$.
Here, $(W_t)_{t \geq 0}$ and $(B_t)_{t \geq 0}$ are independent Wiener processes on $\R^D$.
By construction, $Z_T \sim \sfp^*$.
Therefore, to draw samples from $\sfp^*$, one can first sample $Z_0 \sim \sfp_T^*$ and then model the backward dynamic \eqref{eq:ou_backward}.
Since we do not have access to the \emph{score function} $\nabla\log \sfp_t^*(y)$, it must be estimated.
For each timestamp $t > 0$, this is achieved by minimizing the empirical version of the score matching loss:
\begin{align*}
    \min_{s \in \cS_{DSM}} \E \, \|s(X_t) - s_t^*(X_t)\|^2.
\end{align*}
Here, $s_t^*(x) = \nabla\log\sfp_t^*(x)$ and $\cS_{DSM}$ denotes the class of score estimates at the considered timestamp.
As in the case of implicit score matching, the score function $s_t^*$ is unknown.
\cite{vincent2011connection} suggest using a surrogate objective, $\E[\ell_t(s, X_0)]$, defined as
\begin{align*}
    \ell_t(s, X_0)
    = \E\left[ \big\|s(X_t) - \nabla_{X_t} \log\sfp^*_t(X_t \,|\, X_0) \big\|^2 \, \big\vert \, X_0\right] .
\end{align*}
Given the Ornstein-Uhlenbeck forward process \eqref{eq:ou_forward}, we have that
\begin{align}
\label{eq:cond_law_ou}
    (X_t \,|\, X_0) \sim \cN \big(m_t X_0, \sigma_t^2 I_D \big),
    \quad
    \text{where $m_t = e^{-t}$ and $\sigma_t^2 = 1 - e^{-2t}$}.
\end{align}
Consequently, the score function of the conditional density is tractable and, thus,
\begin{align*}
    \ell_t(s, X_0)
    = \E\left[\left\| s(X_t) + \frac{X_t - m_t X_0}{\sigma_t^2} \right\|^2 \, \bigg\vert \, X_0\right] .
\end{align*}
Furthermore, \cite{vincent2011connection} claims that
\begin{align*}
    \E \, \|s(X_t) - s_t^*(X_t)\|^2
    = \E \ell_t(s, X_0) + C_t,
\end{align*}
where $C_t$ is a constant depending on $t$ but independent of $s_t^*$ and $s$.
Therefore, we conclude that
\begin{align}
\label{eq:dsm_exp_loss_diff_fisher}
    \E \, \|s(X_t) - s_t^*(X_t)\|^2
    = \E \ell_t(s, X_0) - \E \ell_t(s_t^*, X_0).
\end{align}
Finally, we define the denoising score matching estimate as the empirical risk minimizer
\begin{align}
\label{eq:dsm_erm}
    \hat{s} = \argmin_{s \in \cS_{DSM}}\left\{\frac{1}{n} \sum_{i = 1}^n \ell_t(s, Y_i) \right\},
\end{align}
where $Y_1, \dots, Y_n \sim \sfp^*$ are independent.

\section{Results}
\label{sec:results}

This section collects our main results. We show that both denoising and implicit score matching are able to estimate not only the score function but also its Jacobian matrix without the curse of dimensionality. The results of such type usually require the underlying density $\sfp^*$ to have a low intrinsic dimension. Following \citep{yakovlev2025generalization}, we assume that the data distribution is a convolution of isotropic Gaussian noise with a measure supported on a low-dimensional surface.

\begin{As}[\citep{yakovlev2025generalization}]
\label{asn:relax_man}
Let $g^* \in \cH^\beta([0, 1]^d, \R^D, H)$, for some $H > 0$ and $d, D \in \N$, with $\|g^*\|_{L^\infty([0, 1]^d)} \leq 1$.
We assume that the observed samples $Y_1, \dots, Y_n$ are independent copies of a random element $Y_0 \in \R^D$, generated according to the model
\begin{align*}
    Y_0 = g^*(U) + \sigma \xi,
\end{align*}
where $U \sim \mathrm{Un}([0, 1]^d)$ and $\xi \sim \cN(0, I_D)$ are independent and $\sigma \in [0, 1)$.
\end{As}

In Assumption \ref{asn:relax_man}, we suppose that the intrinsic dimension $d$ is small compared to the ambient dimension $D$. We would like to note that it is not the only way to impose structural assumptions on the data distribution. For instance, \cite{koehler2023statistical} investigated statistical efficiency of implicit score matching under the condition that $\sfp^*$ has a small isoperimetric constant. In \cite{tang2024adaptivity, azangulov2024convergence}, the authors assumed the existence of a hidden smooth low-dimensional manifold. Nevertheless, Assumption \ref{asn:relax_man} has several advantages over the aforementioned setups. First, Assumption accommodates highly multimodal distributions, where the isoperimetric approach is not applicable. Second, unlike \cite{tang2024adaptivity} and \cite{azangulov2024convergence}, we do not require the image of $g^*$ to have a positive reach. More importantly, the density of $g^*(U)$ (with respect to the volume measure) does not have to be bounded away from zero. In \cite{zhang2024minimax}, the authors discussed that the density lower bound assumption may be restrictive and, in particular, does not explain the true strength of generative diffusion models.

We also highlight that adding a small amount of Gaussian noise is crucial when learning the score function, especially when the data distribution lies near a low-dimensional manifold (see \citep{song2019generative}).
The normalization $\|g^*\|_{L^\infty([0, 1]^d)} \leq 1$ together with $\sigma \in [0, 1)$ ensures a controlled signal‑to‑noise ratio, making the score estimation problem well‑posed.
Furthermore, Assumption \ref{asn:relax_man} encompasses the cases when the data distribution has multiple well-separated components, a typical feature real-world data \citep{brown2023verifying}.

Finally, we emphasize that similar data distribution assumptions were used to analyze statistical efficiency of other generative models, including generative adversarial networks \citep{schreuder2021statistical,stephanovitch2024wasserstein,chakraborty2025statistical} and diffusion-based generative models \citep{yakovlev2025generalization}.
Recently, a more general assumption, where the data sample is a convolution of a bounded random vector and Gaussian noise, was used to analyze the iteration complexity of diffusion models \citep{beyler25conv}.

\subsection{Weighted Gagliardo-Nirenberg inequalities for $\sfp^*$}
\label{sec:towards_bernstein_cond}

We start with a key technical result, which can be considered as an extension of the Gagliardo–Nirenberg inequality \citep{nirenberg1959elliptic} with a weight function $\sfp^*$.

\begin{Lem}
\label{lem:bernstein_cond_div}
    Suppose Assumption \ref{asn:relax_man} holds.
    Let also $h : \R^D \to \R^D$ satisfy $|h|_{W^{\alpha, 2}(\sfp^*)} < \infty$ for some integer $\alpha \geq 2$.
    Then it holds that
    \begin{align*}
        (i)
        &\quad
        \|\rdiv[h]\|_{L^2(\sfp^*)}^2 \leq \frac{\alpha D}{\sigma^2} \left( \|h\|_{L^2(\sfp^*)}^2 + \sigma^{2\alpha} \, |h|_{W^{\alpha, 2}(\sfp^*)}^2 \right)^{1 / \alpha} \|h\|_{L^2(\sfp^*)}^{2 - 2 / \alpha},
        \\
        (ii)
        &\quad
        \big\| \|\nabla h\|_F \big\|_{L^2(\sfp^*)}^2 \leq \frac{\alpha D^{1 - 1 / \alpha}}{\sigma^2} \left( D \, \|h\|_{L^2(\sfp^*)}^2 + \sigma^{2\alpha} \, |h|_{W^{\alpha, 2}(\sfp^*)}^2 \right)^{1 / \alpha} \|h\|_{L^2(\sfp^*)}^{2 - 2 / \alpha} .
    \end{align*}
\end{Lem}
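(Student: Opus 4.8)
The plan is to exploit the special structure of $\sfp^*$ from Assumption~\ref{asn:relax_man}: since $Y_0 = g^*(U) + \sigma\xi$ with $\xi \sim \cN(0, I_D)$, the density $\sfp^*$ is a convolution of a compactly supported measure with the Gaussian $\phi_D(\cdot/\sigma)/\sigma^D$. This means that working in $L^2(\sfp^*)$ is, after conditioning on $U$, essentially working in a Gaussian-weighted space, where Hermite polynomials form a complete orthogonal basis and differentiation acts as a shift on Hermite coefficients (the identity $\partial^{\bp}\cH_{\bk} = \frac{\bk!}{(\bk-\bp)!}\cH_{\bk-\bp}$ recorded in the preliminaries). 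Concretely, for fixed $u \in [0,1]^d$ write $\sfp^*_u(y)$ for the density of $g^*(u) + \sigma\xi$; then $\|f\|_{L^2(\sfp^*)}^2 = \int \|f\|_{L^2(\sfp^*_u)}^2\,\dd u$, and it suffices to prove the corresponding inequalities with $\sfp^*$ replaced by each Gaussian $\sfp^*_u$ (with constants independent of $u$) and then integrate over $u$, using concavity of $t \mapsto t^{1/\alpha}$ and $t \mapsto t^{1-1/\alpha}$ together with Jensen's inequality to reassemble the product form on the right-hand side.

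So the heart of the matter is a Gagliardo--Nirenberg-type interpolation inequality for a single (shifted, rescaled) Gaussian weight. After translating by $g^*(u)$ and rescaling by $\sigma$, this reduces to the following: for $f : \R^D \to \R$ with finite relevant Hermite norms,
\[
    |f|_{W^{1,2}(\phi_D)}^2 \;\lesssim\; D^{1/\alpha}\,\bigl(\|f\|_{L^2(\phi_D)}^2 + |f|_{W^{\alpha,2}(\phi_D)}^2\bigr)^{1/\alpha}\,\|f\|_{L^2(\phi_D)}^{2-2/\alpha},
\]
with the $\sigma$-powers restored by the scaling $\partial \mapsto \sigma^{-1}\partial$ under $y \mapsto \sigma y$. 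Expanding $f = \sum_{\bk} c_{\bk}\cH_{\bk}/\sqrt{\bk!}$ (orthonormalized), one has $\|f\|_{L^2(\phi_D)}^2 = \sum c_{\bk}^2$, and each derivative $\partial_i$ multiplies the coefficient of $\cH_{\bk}$ by a factor of order $\sqrt{k_i}$ while lowering the index, so $|f|_{W^{1,2}(\phi_D)}^2 \asymp \sum_{\bk}|\bk|\,c_{\bk}^2$ (summing over $i$ gives the factor $\sum_i k_i = |\bk|$) and likewise $|f|_{W^{\alpha,2}(\phi_D)}^2$ is comparable to a sum weighted by $\sum_{|\bp|=\alpha}\frac{\bk!}{(\bk-\bp)!}$, which is of order $|\bk|^{\alpha}$ up to combinatorial factors involving $D$. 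The inequality $\sum_{\bk}|\bk|\,c_{\bk}^2 \le (\sum_{\bk}|\bk|^{\alpha}c_{\bk}^2)^{1/\alpha}(\sum_{\bk}c_{\bk}^2)^{1-1/\alpha}$ is then just Hölder's inequality with exponents $\alpha$ and $\alpha/(\alpha-1)$ applied to the measure $c_{\bk}^2\,\dd\bk$ and the functions $|\bk|$ and $1$; bounding the genuine $W^{\alpha,2}$ seminorm below by the pure top-order piece and absorbing the lower-order terms into $\|f\|_{L^2}^2$ yields the stated form. For part~$(ii)$, one applies part~$(i)$ componentwise — writing $h = (h_1,\dots,h_D)$, summing $\sum_i \|\nabla h_i\|^2$ over $i$, and again using the concavity/Jensen bookkeeping to pass from a sum of products to a product of sums — which accounts for the extra factor $D^{1-1/\alpha}$ and the $D$ multiplying $\|h\|_{L^2(\sfp^*)}^2$ inside the bracket.

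The main obstacle I expect is tracking the dimension dependence cleanly through the Hermite-coefficient comparison: the combinatorial weights $\sum_{|\bp|=\alpha}\bk!/(\bk-\bp)!$ relating $|f|_{W^{\alpha,2}}^2$ to $\sum_{\bk}|\bk|^{\alpha}c_{\bk}^2$ carry constants that, handled naively, blow up like $D^{\alpha}$ or $\alpha!$, and getting them down to the advertised single factor of $D$ (and linear-in-$\alpha$ prefactor) requires using the multinomial inequality $\sum_{|\bp|=\alpha}\binom{\alpha}{\bp}\prod_i k_i^{p_i} = |\bk|^{\alpha}$ in the right direction and being careful that the seminorm $|f|_{W^{\alpha,2}}$, as defined, already sums over all multi-indices $\bp$ with $|\bp|=\alpha$ without the multinomial coefficient. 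A secondary subtlety is that the convolution structure only gives $\sfp^*$ as a mixture, not a single Gaussian, so one must make sure the interpolation constant is uniform in $u$ (it is, since after translation and scaling the weight is exactly $\phi_D$) and that differentiation commutes with the $u$-integration, which holds under the finite-seminorm hypothesis by dominated convergence. Everything else — the two applications of Hölder, the Jensen step, the componentwise reduction — is routine once the single-Gaussian inequality with explicit constants is in hand.
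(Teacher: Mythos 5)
Your proposal follows essentially the same route as the paper's proof: condition on $U$ (equivalently, substitute $y=\sigma z+g^*(u)$) so that weighted $L^2(\sfp^*)$-norms become $u$-integrals of $L^2(\phi_D)$-norms with the $\sigma$-powers produced by the rescaling; prove a Gaussian-weight interpolation inequality by expanding in Hermite polynomials, using $\partial^{\bp}\cH_{\bk}=\frac{\bk!}{(\bk-\bp)!}\cH_{\bk-\bp}$ and H\"older with exponents $\alpha$ and $\alpha/(\alpha-1)$ on the coefficients; then recombine over $u$. (The paper does the last step by a second H\"older inequality; your ``concavity plus Jensen'' step is the same inequality once you invoke joint concavity of $(a,b)\mapsto a^{1/\alpha}b^{1-1/\alpha}$.)

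The one substantive divergence is the combinatorial comparison, which is exactly the obstacle you flag without resolving. The paper never compares the seminorm with $\sum_{\bk}|\bk|^{\alpha}c_{\bk}^2$: it bounds $\|\rdiv[f]\|^2\le D\sum_i\|\partial^{\be_i}f_i\|^2$ (and likewise works directionwise for the Jacobian), and for each coordinate $i$ interpolates $\partial^{\be_i}$ only against the \emph{same-direction} pure derivative $\partial^{\alpha\be_i}$, via the elementary bound $k_i^{\alpha}(k_i-\alpha)!\le\alpha^{\alpha}k_i!$ for $k_i\ge\alpha$ (and $k_i^{\alpha}\le\alpha^{\alpha}$ otherwise); the full seminorm then enters only as an upper envelope for the sum over $i$ of these diagonal terms. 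This is what yields the stated constants $\alpha D/\sigma^2$ and $\alpha D^{1-1/\alpha}/\sigma^2$, with the extra $D$ inside the bracket of $(ii)$ coming from the lower-order term being summed over the $D$ directions. Your route through $|\bk|^{\alpha}$ can be closed — by the Vandermonde identity $\sum_{|\bp|=\alpha}\binom{\alpha}{\bp}\prod_i k_i(k_i-1)\cdots(k_i-p_i+1)=|\bk|(|\bk|-1)\cdots(|\bk|-\alpha+1)$, in which terms with $\bp\not\le\bk$ vanish automatically, one gets the dimension-free bound $\sum_{\bk}|\bk|^{\alpha}c_{\bk}^2\le\alpha^{\alpha}\,\alpha!\,\big(\|f\|_{L^2(\phi_D)}^2+|f|_{W^{\alpha,2}(\phi_D)}^2\big)$ — but bounding $\binom{\alpha}{\bp}\le\alpha!$ costs an extra factor $(\alpha!)^{1/\alpha}\asymp\alpha$ after the $1/\alpha$-th power, and your componentwise assembly of $(i)$ via the full gradient also places $D\|h\|^2$ (rather than $\|h\|^2$) inside the bracket. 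So your argument proves the lemma up to an additional $\alpha$-factor and a $D^{1/\alpha}$ loss in $(i)$; this is harmless for how the lemma is used downstream, but the diagonal-only comparison is the ingredient needed to obtain the constants as stated.
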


We postpone the proof of Lemma \ref{lem:bernstein_cond_div} to Appendix \ref{sec:lem_bernstein_cond_div_proof} and focus on its implications. First, inequality $(i)$ helps us to verify the Bernstein condition (see \citep[Definition 2.6]{bartlett2006empirical}) for the excess loss class. While for denoising score matching this was already done in \cite{yakovlev2025generalization}, this task remained challenging for implicit score matching.
This property is crucial for establishing sharp rates of convergence as it allows us to use localization technique for unbounded empirical processes (see Appendix \ref{sec:stat_learn_th}). Second, the inequality $(ii)$ allows us to control the score Jacobian matrix estimation error. We just have to verify that both $s^*$ and its estimate are sufficiently smooth.
We would like to emphasize that the dependence on the ambient dimension in Lemma \ref{lem:bernstein_cond} is polynomial. This aspect did not receive much attention in some previous works on statistical properties of generative diffusion models, such as \cite{oko2023diffusion, tang2024adaptivity}, where the hidden constant in the rates of convergence is of order $\cO(e^D)$. This significantly limits applicability of those results.

\subsection{Score estimation using implicit score matching}

We proceed with analysis of implicit score matching. To leverage the low intrinsic dimension of $\sfp^*$ following from Assumption \ref{asn:relax_man}, we take a reference class $\cS$ of special structure. Let us note that, under Assumption \ref{asn:relax_man}, the density $\sfp^*$ admits the following form:
\begin{align*}
    \sfp^*(y) = (2\pi\sigma^2)^{-D/2} \int\limits_{[0, 1]^d} \exp\left\{-\frac{\|y - g^*(u)\|^2}{2\sigma^2}\right\} \, \dd u, \quad y \in \R^D .
\end{align*}
Therefore, the score function is given by
\begin{align}
    \label{eq:s_star_f_star_def}
    s^*(y)
    = \nabla \log \sfp^*(y)
    = -\frac{y}{\sigma^2} + \frac{f^*(y)}{\sigma^2},
    \quad f^*(y) = \frac{\integral{[0, 1]^d}g^*(u)\exp\left\{-\frac{\|y - g^*(u)\|^2}{2\sigma^2}\right\} \dd u  }{\integral{[0, 1]^d} \exp\left\{-\frac{\|y - g^*(u)\|^2}{2\sigma^2}\right\} \dd u } .
\end{align}
In other words, $s^*$ is a sum of linear and bounded nonlinear parts. We take this structure of the score function \eqref{eq:s_star_f_star_def} into account when designing the reference class.

\begin{Def}[class of score estimators]
\label{def:score_class}
 For $\alpha \in \N$ and $\sigma_{\min}\in(0, 1)$ the class of the score estimators is defined as 
\begin{align*}
    &\cS(L, W, S, B)
    = \Big\{s(x) = -a x + af(x) \, : \, a \in (1, \sigma^{-2}_{\min}], \, f = (f_1, \dots, f_D)^\top \in\NN(L, W, S, B), \\
    &\qquad \max_{1\leq l \leq D}|f_l|_{W^{0, \infty}(\R^D)} \leq C_0,
    \quad \max_{1\leq l \leq D}|f_l|_{W^{1, \infty}(\R^D)} \leq C_1 \sigma^{-2}_{\min},
    \quad \max_{1\leq l \leq D}|f_l|_{W^{\alpha, 2}(\sfp^*)} \leq C_\alpha \sigma^{-2\alpha}_{\min}\Big\} .
\end{align*}
\end{Def}

We would like to make several remarks regarding Definition \ref{def:score_class}.
The conditions on the Sobolev seminorms are motivated by Lemma \ref{lem:bernstein_cond_div}: before we apply the weighted Gagliardo-Nirenberg inequalities, we must ensure higher-order derivatives of the score estimate candidates are bounded uniformly over $\cS(L, W, S, B)$. We understand that enforcing boundedness of higher-order derivatives across the entire space is a challenging task in practice.
To the best of our knowledge, only first-order regularity is achieved during training, either by adding a regularization term to the loss function \citep{yoshida2017spectral, gulrajani2017improved, pauli2021training} or through the introduction of a more sophisticated neural network architecture \citep{anil19sorting, qi2023lipsformer}.

In what follows, we often $\cS$, instead of $\cS(L, W, S, B)$, when there is no ambiguity.
Note that, as defined in Assumption \ref{asn:relax_man}, the value of $\sigma$ is unknown.
To ensure that the linear term of the score estimate aligns with the true score function, as given in \eqref{eq:s_star_f_star_def}, we introduce a learnable parameter $a \in (1, \sigma_{\min}^{-2}]$.
This requires that $\sigma_{\min}$ is known and satisfies $\sigma_{\min} \leq \sigma$.
These constraints are formalized in the following assumption.

\begin{As}[non-degenerate noise]
\label{as:nondeg_noise}
    Assume that $\sigma_{\min} \leq \sigma$ for a known constant $\sigma_{\min} \in (0, 1)$, where $\sigma$ is a noise level defined in Assumption \ref{asn:relax_man}.
\end{As}

We are ready to formulate our main result on generalization properties of implicit score matching.

\begin{Th}[implicit score matching generalization bound]
\label{th:estimation}  
Grant Assumptions \ref{asn:relax_man} and \ref{as:nondeg_noise}, and let the sample size $n$ be large enough, in the sense that it satisfies
\begin{align*}
    n\sigma_{\min}^{52 + 4P(d, \beta)}
    \gtrsim 1 \vee \left\{ D\sigma^{-2} (\log n + \log(\sigma_{\min}^{-2}) ) \log(nD\sigma^{-2}\log(\sigma_{\min}^{-2})) \right\}^{\frac{2\beta + d}{\beta \wedge 1}} .
\end{align*}
Define $P(d, \beta) = \binom{d + \floor{\beta}}{d}$.
Then, for any $\delta \in (0, 1)$ with probability at least \(1-\delta\) the risk of an empirical risk minimizer over the class \(\cS(L, W, S, B)\) (as defined in Definition \ref{def:score_class} and \eqref{eq:ism_erm}) with 
\begin{align*}
    &L \lesssim \log(nD\sigma_{\min}^{-2}),
    \quad \log B \lesssim D^8 (\log n)^{65} (\log D)^{26}(\log(\sigma_{\min}^{-2}))^{69}, \\
    &\|W\|_\infty \vee S \lesssim D^{16 + P(d, \beta)} (n\sigma_{\min}^{52 + 4P(d, \beta)})^{\frac{d}{2\beta + d}} \sigma_{\min}^{-48 - 4P(d, \beta)} (\log(nD\sigma_{\min}^{-2}))^{142 + 17P(d, \beta)}
\end{align*}
and $C_0 \asymp 1$, $C_1 \asymp (D \log n \log\sigma_{\min}^{-2})^{\cO(1)}$, $C_\alpha \asymp (D \log n \log\sigma_{\min}^{-2})^{\cO(\log(n\sigma_{\min}^{-2}))}$ satisfies the inequality
\begin{equation}
    \label{eq:ism_l2_norm_bound}
    \|\hat{s} - s^*\|_{L^2(\sfp^*)}^2
    \lesssim \sigma^{-8} \left( n \sigma_{\min}^{52 + 4P(d, \beta)}\right)^{-\frac{2\beta}{2\beta + d}} \log(e / \delta) \sL(\sigma_{\min}, D, n),
\end{equation}
where the logarithmic factors are contained within
\begin{align*}
    \sL(\sigma_{\min}, D, n)
    = \left(D \log(n) \log(\sigma_{\min}^{-2}) \right)^{\cO\left(\sqrt{\log n} + \sqrt{\log(\sigma_{\min}^{-2})} \right)} .
\end{align*}
Furthermore, with probability at least $1 - \delta$,
\begin{equation}
    \label{eq:ism_jacobian_bound}
    \big\| \|\nabla(\hat{s} - s^*)\|_F \big\|^2_{L^2(\sfp^*)}
    \lesssim \sigma^{-10}\sigma_{\min}^{-4} \left(n \sigma_{\min}^{52 + 4P(d, \beta)} \right)^{-\frac{2\beta}{2\beta + d}} \log(e / \delta) \sL(\sigma_{\min}, D, n).
\end{equation}
\end{Th}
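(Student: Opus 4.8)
The plan is to split the argument into a standard statistical learning step and an approximation step, and then to combine them through the weighted Gagliardo-Nirenberg inequalities of Lemma \ref{lem:bernstein_cond_div}. First I would set up the excess loss class. For the implicit score matching loss \eqref{eq:ism_loss}, define $\mathcal{G} = \{ y \mapsto \ell(s, y) - \ell(s^*, y) : s \in \cS \}$; by the identity \eqref{eq:ism_loss_diff_fisher} the population mean of each element equals $\tfrac12 \|s - s^*\|_{L^2(\sfp^*)}^2$. The central obstacle, and the point where inequality $(i)$ of Lemma \ref{lem:bernstein_cond_div} enters, is verifying a Bernstein-type condition: I would bound the variance (and the $\psi_1$-norm) of $\ell(s, Y) - \ell(s^*, Y)$ in terms of its expectation. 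Writing $h = s - s^*$, the excess loss expands as $\tfrac12\|h(Y)\|^2 + \langle h(Y), s^*(Y)\rangle + \rdiv[h](Y)$, and after the integration-by-parts identity the fluctuating part is controlled by $\|h\|_{L^4(\sfp^*)}^2$, $\|\,\|h\|\cdot\|s^*\|\,\|_{L^2(\sfp^*)}$, and $\|\rdiv[h]\|_{L^2(\sfp^*)}$. The divergence term is precisely what Lemma \ref{lem:bernstein_cond_div}$(i)$ bounds by $(\alpha D / \sigma^2)\,(\|h\|_{L^2(\sfp^*)}^2 + \sigma^{2\alpha}|h|_{W^{\alpha,2}(\sfp^*)}^2)^{1/\alpha}\|h\|_{L^2(\sfp^*)}^{2-2/\alpha}$; since $\cS$ enforces a uniform bound on $|f_l|_{W^{\alpha,2}(\sfp^*)}$, and since $\alpha$ can be taken large ($\alpha \asymp \log(n\sigma_{\min}^{-2})$), this is essentially $\lesssim (D/\sigma^2)\|h\|_{L^2(\sfp^*)}^{2-o(1)}$, up to the polylogarithmic factor absorbed into $\sL$. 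Together with boundedness of the nonlinear parts this yields the Bernstein condition with a variance proxy that scales like $\sigma^{-8}$ times the excess risk, up to logs.

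Next I would invoke the localized empirical process machinery announced in contribution (4) and developed in Appendix \ref{sec:stat_learn_th}: under the Bernstein condition, the empirical risk minimizer satisfies, with probability $\ge 1-\delta$,
\[
    \tfrac12\|\hat s - s^*\|_{L^2(\sfp^*)}^2 \lesssim \inf_{s \in \cS} \tfrac12\|s - s^*\|_{L^2(\sfp^*)}^2 + r_n^* + \frac{\sigma^{-8}\log(e/\delta)}{n}\,\mathrm{polylog},
\]
where $r_n^*$ is the fixed point of the local Rademacher complexity of $\cS$, which for the sparse GELU network class $\NN(L,W,S,B)$ behaves (via a Dudley/entropy bound with the entropy of $\NN$ given in terms of $L, \|W\|_\infty, S, \log B$) like $(S L \log B \cdot \log n)/n$ up to constants, times the $\sigma^{-8}$ variance proxy. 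The approximation term is handled by the simultaneous-approximation result for GELU networks \citep{yakovlev2025simultaneous}: using the explicit form \eqref{eq:s_star_f_star_def}, $f^*$ is smooth and, crucially, inherits a Hölder structure of intrinsic dimension $d$ from $g^* \in \cH^\beta([0,1]^d,\R^D,H)$; one constructs $f \in \NN(L,W,S,B)$ with $\|f - f^*\|_{L^\infty}$ small AND with the $W^{1,\infty}$ and $W^{\alpha,2}(\sfp^*)$ seminorms under the budgets $C_1 \sigma_{\min}^{-2}$, $C_\alpha \sigma_{\min}^{-2\alpha}$ of Definition \ref{def:score_class}, and with the parameter $a$ matching $\sigma^{-2}$ up to the discretization $a \in (1,\sigma_{\min}^{-2}]$. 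Balancing $S \asymp n^{d/(2\beta+d)}$ against the approximation error $S^{-2\beta/d}$ produces the rate $(n\sigma_{\min}^{52+4P(d,\beta)})^{-2\beta/(2\beta+d)}$; the powers of $\sigma_{\min}$ and the $P(d,\beta)$ exponent come from tracking how the seminorm budgets $C_1, C_\alpha$ and the discretization of $a$ inflate the network size, exactly as in the stated architecture bounds.

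Finally, for the Jacobian bound \eqref{eq:ism_jacobian_bound} I would apply inequality $(ii)$ of Lemma \ref{lem:bernstein_cond_div} to $h = \hat s - s^*$. This requires that $h \in W^{\alpha,2}(\sfp^*)$ with a controlled seminorm: both $s^*$ (from \eqref{eq:s_star_f_star_def}, whose derivatives of all orders are bounded by powers of $\sigma^{-2}$) and $\hat s \in \cS$ (by the explicit seminorm constraints in Definition \ref{def:score_class}) qualify, so $|h|_{W^{\alpha,2}(\sfp^*)} \lesssim \sigma^{-2\alpha} \vee C_\alpha \sigma_{\min}^{-2\alpha}$. Plugging into $(ii)$ with $\alpha$ large gives $\big\|\|\nabla h\|_F\big\|_{L^2(\sfp^*)}^2 \lesssim (D^{1-1/\alpha}/\sigma^2)(D\|h\|_{L^2(\sfp^*)}^2 + \sigma^{2\alpha}|h|_{W^{\alpha,2}(\sfp^*)}^2)^{1/\alpha}\|h\|_{L^2(\sfp^*)}^{2-2/\alpha}$; since $\sigma^{2\alpha}|h|_{W^{\alpha,2}(\sfp^*)}^2 \lesssim (\sigma/\sigma_{\min})^{2\alpha} C_\alpha^2$ and $\alpha \to \infty$ kills the exponent, the bracket is $\lesssim D^{1+o(1)}$ up to $\sL$, and combining with \eqref{eq:ism_l2_norm_bound} yields \eqref{eq:ism_jacobian_bound} with the extra $\sigma^{-2}\sigma_{\min}^{-4}$ factor (one $\sigma^{-2}$ from the prefactor in $(ii)$, the $\sigma_{\min}^{-4}$ from the worst-case ratio in the bracket term) absorbed, the remaining $\sigma$-powers matching the stated $\sigma^{-10}$. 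The main obstacle throughout is the first step: proving the Bernstein condition for the implicit score matching excess loss uniformly over $\cS$ — the divergence term $\rdiv[s](Y)$ is the source of all the difficulty, and controlling its $L^2(\sfp^*)$-norm by a near-linear function of the excess risk (rather than by a constant, which would only give the slow rate) is exactly what Lemma \ref{lem:bernstein_cond_div}$(i)$ and the $W^{\alpha,2}(\sfp^*)$ constraint in Definition \ref{def:score_class} are engineered to deliver; the unbounded-empirical-process localization in Appendix \ref{sec:stat_learn_th} then has to be robust enough to exploit it without an $\varepsilon$-net over the (genuinely unbounded) loss class.
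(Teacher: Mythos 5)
Your overall route coincides with the paper's proof: the excess loss class is shown to have a finite $\psi_1$-diameter, the Bernstein condition is verified through inequality $(i)$ of Lemma \ref{lem:bernstein_cond_div} (this is exactly Lemma \ref{lem:bernstein_cond} in the paper), the unbounded localization of Theorem \ref{th:tail_ineq_unb_new} and Remark \ref{rem:erm_localization_bound} replaces a bounded local-Rademacher argument, the approximation error is supplied by Theorem \ref{thm:approx_main} from \cite{yakovlev2025simultaneous} with the balancing $\eps \asymp (n\sigma_{\min}^{52+4P(d,\beta)})^{-1/(2\beta+d)}$, and the Jacobian bound is obtained by feeding $\hat s - s^*$ into part $(ii)$ of the Gagliardo--Nirenberg lemma. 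So the architecture of your argument is the intended one.

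There is, however, one concrete calibration step that fails as written: the choice $\alpha \asymp \log(n\sigma_{\min}^{-2})$. The constant $C_\alpha$ in Definition \ref{def:score_class} is not free: for the approximant of Theorem \ref{thm:approx_main} to lie in $\cS(L,W,S,B)$ one is forced to take $C_\alpha \asymp \exp\{\cO(\alpha^2\log(\alpha D\log(1/\eps)\log\sigma_{\min}^{-2}))\}$, and the Bernstein constant of Lemma \ref{lem:bernstein_cond} enters the final bound through $C_\alpha^{2/\alpha} = \exp\{\cO(\alpha\log(\alpha D\log n\log\sigma_{\min}^{-2}))\}$, together with a loss of order $n^{\cO(1/\alpha)}$ caused by the exponent $2-2/\alpha$ in the variance bound. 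With $\alpha \asymp \log(n\sigma_{\min}^{-2})$ the first factor becomes $n^{\cO(\log\log n)}$, which is superpolynomial and destroys the rate $n^{-2\beta/(2\beta+d)}$; it is not, as you assert, a polylogarithmic factor absorbed into $\sL$. The paper balances the two losses by taking $\alpha \asymp \sqrt{\log n} + \sqrt{\log(\sigma_{\min}^{-2})}$, which makes both of order $(D\log n\log\sigma_{\min}^{-2})^{\cO(\sqrt{\log n}+\sqrt{\log(\sigma_{\min}^{-2})})}$, i.e.\ exactly the stated $\sL(\sigma_{\min},D,n)$, and simultaneously yields the advertised values of $C_0, C_1, C_\alpha$. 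A smaller glossed point: your entropy bound is quoted for $\NN(L,W,S,B)$ itself, but the loss class contains $\rdiv[s]$, so one also needs parameter-Lipschitz control of the divergence of a GELU network (Lemma \ref{lem:gelu_func_div_prox}$(ii)$, used in Lemma \ref{lem:loss_cov_num_bound_true}) and a moment bound on the random radius $R_n=\max_i\|Y_i\|$ (Lemma \ref{lem:exp_max_log_data}) before Theorem \ref{th:tail_ineq_unb_new} can be invoked; these steps are routine but not automatic.
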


The proof of Theorem \ref{th:estimation} is moved to Section \ref{sec:estimation_proof}.
Before comparing our result with the previous work, let us highlight key implications.
First, we derive the rate of convergence $\cO(n^{-2\beta / (2\beta + d)})$ for both score and its Jacobian matrix estimation, which is determined by the effective dimension, rather than the ambient one.
Unlike kernel-based score estimators \citep{wibisono2024optimal, zhang2024minimax} and the procedures based on Stein's method \citep{li2018gradient,shi2018spectral}, the bounds of Theorem \ref{th:estimation} do not become vacuous when $D \asymp \log n$. 
It also indicates that estimating the Jacobian matrix is as easy as learning the score function in terms of the sample size.
It is not surprising, because the score function $s^*$ is analytic in our setup (see \cite[Lemma 4.1]{yakovlev2025generalization}).
Second, we note that $\sL(\sigma_{\min}, D, n)$ grows in the sample size $n$ slower than any polynomial and, hence, it does not ruin the derived fast rate.
Moreover, the rate does not deteriorate if $D \asymp \log n$.

To the best of our knowledge, only \cite{shen2024differentiable} has considered a setup similar to ours. This work assumed that the data distribution is supported on a neighborhood of a Riemannian submanifold.
However, this bounded support assumption does not explain the success of score-based generative models \citep{song2019generative} when the data is corrupted with Gaussian noise.
In contrast, our result successfully addresses this case.
Furthermore, \cite{shen2024differentiable} do not provide convergence rates for the score Jacobian matrix estimation.

In \citep[Lemma 12]{sasaki18mode}, the authors establish estimation error bounds on the score function and its Jacobian matrix with respect to the infinity norm on a compact set, using a least-squares density-derivative ratio method.
However, as discussed in the introduction, these bounds may be affected by the curse of dimensionality, which is avoided in Theorem \ref{th:estimation}. The same concerns the methods based on kernel density estimation \cite{wibisono2024optimal, zhang2024minimax}.
Furthermore, kernel density estimation methods suffer not only from the curse of dimensionality but also from deteriorating convergence rates as the derivative order increases \citep{genovese2014nonparametric}.

Theorem \ref{th:estimation} also extends the statistical guarantees of \cite{ghosh2025stein} for implicit score matching, who considered a univariate case.
Specifically, Theorem 7 in \cite{ghosh2025stein} derives fast score estimation rates under the assumption that the data is a convolution of a distribution with support contained in $[-M, M]$ and a sub-Gaussian noise.
However, this work does not explicitly state the dependence on $M$ or, in view of Assumption \ref{asn:relax_man}, does not provide a dependence on the noise scale parameter $\sigma$, which is a crucial aspect for analyzing distribution estimation in diffusion models, as highlighted by \cite{tang2024adaptivity, azangulov2024convergence, yakovlev2025generalization}.

By comparing the convergence rates provided by Theorem \ref{th:estimation} with with those achieved by denoising score matching \citep{yakovlev2025generalization} under identical assumptions, we see that implicit score matching achieves the same rate of $\cO(n^{-2\beta / (2\beta + d)})$.
Thus, implicit score matching is statistically efficient as denoising score matching, even though the latter has access to the original noiseless samples.
Although the dependence on the noise scale parameter in Theorem \ref{th:estimation} is less favourable than in \cite{yakovlev2025generalization}, it still remains polynomial.

\subsection{Score estimation using denoising score matching}

Finally, we discuss our findings on statistical properties of denoising score matching.
In particular, we show that, under Assumption \ref{asn:relax_man}, Jacobian matrix of the denoising score matching output yields a consistent log-density Hessian estimate, provided that the reference class $\cS_{DSM}$ is chosen properly.
To facilitate our analysis, we fix an arbitrary $t > 0$ and investigate the statistical efficiency of denoising score matching at this specific timestamp.
This focus is motivated by the requirement for accurate estimation of both the score function and its Jacobian matrix at designated timestamps \citep{li2024accelerating, li2024towards, li2024sharp, li2025faster}.

Assumption \ref{asn:relax_man} in conjunction with the conditional law of the Ornstein-Uhlenbeck process given in \eqref{eq:cond_law_ou} implies that the density of $X_t$, for any $t > 0$, can be written in a closed form as
\begin{align*}
    \sfp_t^*(y) = \big( 2\pi(m_t^2\sigma^2 + \sigma_t^2) \big)^{-D / 2}\integral{[0, 1]^d}\exp\left\{-\frac{\|y - m_tg^*(u)\|^2}{2(m_t^2\sigma^2 + \sigma_t^2)}\right\} \, \dd u,
    \quad y \in \R^D .
\end{align*}
Now let us fix an arbitrary $t > 0$.
Therefore, by differentiating the logarithm of the derived density, we arrive at
\begin{align}
\label{eq:true_score_def}
    s_t^*(y) = -\frac{y - m_t f^*(y, t)}{m_t^2\sigma^2 + \sigma_t^2},
    \quad \text{where} \quad
    \quad f^*(y, t) = \frac{\integral{[0, 1]^d}g^*(u)\exp\left\{-\frac{\|y - m_tg^*(u)\|^2}{2(m_t^2\sigma^2 + \sigma_t^2)} \right\} \, \dd u }{\integral{[0, 1]^d}\exp\left\{-\frac{\|y - m_tg^*(u)\|^2}{2(m_t^2\sigma^2 + \sigma_t^2)} \right\} \, \dd u} .
\end{align}
Based on the expression for the true score function given in \eqref{eq:true_score_def}, we formulate the following definition of its estimate.

\begin{Def}
\label{def:dsm_score_class}
    For $\alpha \in \N$ and $t > 0$ define
    \begin{align*}
        \cS_{DSM}(L, W, S, B)
        = \bigg\{ s(x) = -\frac{x}{m_t^2\tilde{\sigma}^2 + \sigma_t^2} + \frac{m_t f(x)}{m_t^2\tilde{\sigma}^2 + \sigma_t^2} \, : \, \tilde{\sigma} \in [0, 1), \, f \in \NN(L, W, S, B),
        &\\
        \max_{1 \leq l \leq D}|f_l|_{W^{0, \infty}(\R^D)} \leq C_0,
        \, \max_{1 \leq l \leq D}|f_l|_{W^{\alpha, 2}(\sfp_t^*)} \leq C_\alpha\sigma_t^{-2\alpha}
        &
        \bigg\}.
    \end{align*}
\end{Def}
Non-asymptotic high-probability upper bounds on the generalization error and the accuracy of score Jacobian matrix estimation by denoising score matching are given in the next theorem.

\begin{Th}[denoising score matching generalization bound]
\label{th:estimation_dsm}
Under Assumption \ref{asn:relax_man}, suppose the sample size satisfies
\begin{align*}
    n\sigma_{\min}^{52 + 4P(d, \beta)}
    \gtrsim \left\{ D(m_t^2\sigma^2 + \sigma_t^2)^{-1} \log^2(nD\sigma_t^{-2}) \right\}^{\frac{2\beta + d}{\beta \wedge 1}} .
\end{align*}
Define $P(d, \beta) = \binom{d + \floor{\beta}}{d}$.
Then, for any $\delta \in (0, 1)$, with probability at least $1 - \delta$, the empirical risk minimizer $\hat{s}$ over the class $\cS_{DSM}(L, W, S, B)$ (as defined in \eqref{eq:dsm_erm}) with
\begin{align*}
    &L \lesssim \log(nD\sigma_t^{-2}),
    \quad \log B \lesssim D^8 (\log(Dn\sigma_t^{-2}))^{90}, \\
    &\|W\|_\infty \vee S \lesssim D^{16 + P(d, \beta)} (n\sigma_t^{52 + 4P(d, \beta)})^{\frac{d}{2\beta + d}} \sigma_t^{-48 - 4P(d, \beta)} (\log(nD\sigma_t^{-2}))^{142 + 17 P(d, \beta)}
\end{align*}
and
\begin{align*}
    C_0 \asymp 1,
    \quad C_\alpha \asymp (D\log n \log(\sigma_t^{-2}))^{\cO(\log(n\sigma_t^{-2}))}
\end{align*}
satisfies the inequality
\begin{align*}
    \|\hat{s} - s_t^*\|^2_{L^2(\sfp_t^*)}
    \lesssim D^{25 + P(d, \beta)}(m_t^2\sigma^2 + \sigma_t^2)^{-4} (n\sigma_t^{52 + 4P(d, \beta)})^{-\frac{2\beta}{2\beta + d}} \log(e / \delta) \sL(\sigma_t, D, n),
\end{align*}
where
\begin{align*}
    \sL(\sigma_t, D, n) = (\log(Dn\sigma_t^{-2}))^{239 + 17 P(d, \beta)} \exp\left\{\cO\left( \sqrt{\log n} + \sqrt{\log(\sigma_t^{-2})} \right) \right\} .
\end{align*}
Furthermore, on the same event of probability at least $1 - \delta$, it holds that
\begin{align*}
    \big\| \|\nabla(\hat{s} - s_t^*)\|_F \big\|^2_{L^2(\sfp_t^*)}
    \lesssim \frac{(D\log n \log(\sigma_t^{-2}))^{\cO(\sqrt{\log n} + \sqrt{\log(\sigma_t^{-2})})} \log(e / \delta)}{\sigma_t^4(m_t^2\sigma^2 + \sigma_t^2)^5} (n\sigma_t^{52 + 4P(d, \beta)})^{-\frac{2\beta}{2\beta + d}} .
\end{align*}

\end{Th}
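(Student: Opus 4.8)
The plan is to follow the same route as the proof of Theorem~\ref{th:estimation}, since denoising score matching at a fixed timestamp $t$ has essentially the same structure as implicit score matching, with the noise level $\sigma$ replaced by the effective noise level $\sigma_t$ (or more precisely $\sqrt{m_t^2\sigma^2 + \sigma_t^2}$) and the target density $\sfp^*$ replaced by $\sfp_t^*$. First I would recall from \eqref{eq:dsm_exp_loss_diff_fisher} that the excess loss $\ell_t(s, X_0) - \ell_t(s_t^*, X_0)$ equals (in expectation) the Fisher-type divergence $\|s - s_t^*\|^2_{L^2(\sfp_t^*)}$; this makes the problem an instance of empirical risk minimization with a well-behaved excess loss. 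The key structural fact is that $\sfp_t^*$ satisfies Assumption~\ref{asn:relax_man} with $g^*$ rescaled to $m_t g^*$ and noise $\sqrt{m_t^2\sigma^2 + \sigma_t^2} \geq \sigma_t$; consequently Lemma~\ref{lem:bernstein_cond_div} applies verbatim with $\sigma$ replaced by $\sqrt{m_t^2\sigma^2 + \sigma_t^2}$ (and hence is controlled via $\sigma_t$ after using $m_t^2\sigma^2 + \sigma_t^2 \geq \sigma_t^2$). This yields the Bernstein condition for the excess loss class associated with $\cS_{DSM}$, which is what the localization machinery of Appendix~\ref{sec:stat_learn_th} requires.

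The second ingredient is an approximation bound: one must show that $s_t^*$ can be approximated in $L^2(\sfp_t^*)$ (and with control of the required Sobolev seminorms, so the approximant lies in $\cS_{DSM}(L, W, S, B)$) by a network of the stated size, with error of order $(\text{width})^{-\beta/d}$ up to logarithmic and polynomial-in-$D$ factors. This is where the intrinsic dimension $d$ enters: since $f^*(\cdot, t)$ in \eqref{eq:true_score_def} is, as in \cite{yakovlev2025generalization}, an analytic function built from a $d$-dimensional integral, it admits a Hermite-type expansion whose truncation is approximable by GELU networks (using \cite{yakovlev2025simultaneous}); the smoothness class $\cH^\beta([0,1]^d, \R^D, H)$ of $g^*$ controls the truncation error and produces the $d$-dependence in the exponent $d/(2\beta+d)$. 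Then I would balance the approximation error against the stochastic (estimation) error from the localized empirical process bound — the standard bias--variance trade-off that gives the $n^{-2\beta/(2\beta+d)}$ rate, with the prefactor $(m_t^2\sigma^2+\sigma_t^2)^{-4}$ arising from the Lipschitz/divergence constants of $s$ and $s_t^*$ near the support of $\sfp_t^*$.

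For the Jacobian bound I would invoke part~$(ii)$ of Lemma~\ref{lem:bernstein_cond_div} applied to $h = \hat s - s_t^*$: this reduces control of $\big\|\,\|\nabla(\hat s - s_t^*)\|_F\,\big\|^2_{L^2(\sfp_t^*)}$ to control of $\|\hat s - s_t^*\|^2_{L^2(\sfp_t^*)}$ (already obtained) together with a uniform bound on $|\hat s - s_t^*|_{W^{\alpha, 2}(\sfp_t^*)}$. The latter follows because both $\hat s \in \cS_{DSM}$ and $s_t^*$ have higher-order weighted Sobolev seminorms bounded by the constants $C_\alpha$ (for $\hat s$ by definition of the class, for $s_t^*$ by analyticity as in \cite[Lemma~4.1]{yakovlev2025generalization}), so $|h|_{W^{\alpha,2}(\sfp_t^*)} \lesssim C_\alpha \sigma_t^{-2\alpha}$. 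Plugging into $(ii)$ and choosing $\alpha$ large (logarithmically in $n$) so that the term $\sigma_t^{2\alpha}|h|^2_{W^{\alpha,2}}$ is dominated by $D\|h\|^2_{L^2(\sfp_t^*)}$ gives the claimed bound, with the extra $\sigma_t^{-4}(m_t^2\sigma^2+\sigma_t^2)^{-5}$ prefactor coming from the explicit constants in $(ii)$ and the relation between $\sigma_t$ and $m_t^2\sigma^2+\sigma_t^2$.

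The main obstacle I expect is not any single step in isolation but bookkeeping the dependence on $\sigma_t$ and $D$ through the approximation argument: one must verify that the network approximant can simultaneously (a) achieve $L^2(\sfp_t^*)$ error of the right order, (b) keep $|f_l|_{W^{0,\infty}}$ bounded by an absolute constant, and (c) keep $|f_l|_{W^{\alpha,2}(\sfp_t^*)}$ below $C_\alpha \sigma_t^{-2\alpha}$ — and these constraints interact, since forcing smoothness of the approximant typically inflates its weights (hence $\log B$) and its size. Threading this needle, while tracking that all hidden constants remain polynomial in $D$ and sub-polynomial in $n$, is the delicate part; fortunately it parallels the corresponding argument for Theorem~\ref{th:estimation}, so the construction from \cite{yakovlev2025generalization} combined with \cite{yakovlev2025simultaneous} can be adapted with the substitutions $\sigma \mapsto \sqrt{m_t^2\sigma^2+\sigma_t^2}$, $g^* \mapsto m_t g^*$.
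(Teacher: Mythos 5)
Your proposal matches the paper's proof in all essential respects: the key observation that $\sfp_t^*$ satisfies Assumption \ref{asn:relax_man} with generator $m_t g^*$ and variance $m_t^2\sigma^2+\sigma_t^2$, the Bernstein condition via Lemma \ref{lem:bernstein_cond_div}, the unbounded-process localization of Appendix \ref{sec:stat_learn_th} (with $\psi_1$-diameter and covering-number bounds as in Theorem \ref{th:estimation}), the approximation step via \cite{yakovlev2025simultaneous} with $\eps$ and $\alpha \asymp \sqrt{\log n}+\sqrt{\log(\sigma_t^{-2})}$ balanced to give the $n^{-2\beta/(2\beta+d)}$ rate, and the Jacobian bound via part $(ii)$ with the Sobolev control of $s_t^*$ from \cite[Lemma 4.1]{yakovlev2025generalization}. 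This is essentially the same argument the paper carries out, so no further comparison is needed.
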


The proof of Theorem \ref{th:estimation_dsm} is postponed to Appendix \ref{sec:th_estimation_dsm_proof}.
Now compare the result outlined in Theorem \ref{th:estimation_dsm} with previous work.
\cite{yakovlev2025generalization} consider the identical setup and achieve the same convergence rate in terms of the sample size.
However, our work exhibits slower dependence on $\sigma_t$, which remains polynomial.
We argue that this is due to the approximation result given in Theorem \ref{thm:approx_main} that offers a bit worse dependence on the noise scale when approximating higher order derivatives.
Similar to Theorem \ref{th:estimation}, we establish a $\cO(n^{-2\beta / (2\beta + d)})$ rate of convergence for the score Jacobian matrix estimation.
Notably, this rate in terms of the sample size is as fast as that for the score function estimation. Similarly to Theorem \ref{th:estimation}, the reason for such phenomenon is that $s^*$ is an analytic function (see \cite[Lemma 4.1]{yakovlev2025generalization}). 
We also note that the condition on the $W^{\alpha, 2}(\sfp_t^*)$-norm in Definition \ref{def:dsm_score_class} can be removed if a Jacobian matrix estimation is not required.

The existing work on statistical efficiency of denoising score matching that avoids the curse of dimensionality \citep{chen2023score,tang2024adaptivity,azangulov2024convergence,yakovlev2025generalization} does not address score Jacobian matrix estimation problem.
Since these studies provide no approximation guarantees for the higher-order derivatives of the score function, a straightforward application of our weighted Gagliardo-Nirenberg inequality (see Lemma \ref{lem:bernstein_cond_div}) fails to establish an error bound for the score Jacobian matrix estimation.
Consequently, in \cite{li2024accelerating, li2024towards, li2024sharp, li2025faster} the score Jacobian matrix estimation error is no longer a limiting factor, since it is estimated at the same rate as the score function, as indicated by Theorem \ref{th:estimation_dsm}.

\section{Proof of Theorem \ref{th:estimation}}
\label{sec:estimation_proof}

We split the proof into several steps for convenience. We focus our attention on the upper bound $\|\hat s - s^*\|_{L_2(\sfp^*)}^2$ (see Steps 1--5) and postpone the proof of \eqref{eq:ism_jacobian_bound} to the very end (Step 6). A reader will observe that the second statement of the theorem easily follows from our findings presented on Steps 1--5.

As usually in learning theory, to control generalization error of the implicit score matching estimate we have to bound approximation and estimation errors. Fortunately, the upper bound on the former one is straightforward in view of the recent result of \cite[Theorem 3.2]{yakovlev2025simultaneous} (see Step 5). In contrast, the study of estimation error requires more efforts. First, we make some preparatory steps (Steps 1--3) to study properties of the excess loss class
\begin{equation}
    \label{eq:excess_loss_class}
    \cF = \big\{ \ell(s, \cdot) - \ell(s^*, \cdot) : s \in \cS(L, W, S, B) \big\}.
\end{equation}
In particular, on Step 1 we show that $\cF$ has a bounded $\psi_1$-diameter. On the second step, we use the weighted Gagliardo-Nirenberg inequality (Lemma \ref{lem:bernstein_cond_div}) to show that $\cF$ satisfies Bernstein's condition. Finally, on Step 3 we elaborate on the covering number of $\cF$. All these steps are necessary to apply a localization argument for unbounded empirical processes (Theorem \ref{th:tail_ineq_unb_new}).

\noindent
\textbf{Step 1: $\psi_1$-diameter of $\cF$.}
\quad
We start with an upper bound on the $\psi_1$-diameter of the excess loss class $\cF$.
The next lemma shows that it scales linearly with $D$ and $\sigma_{\min}^{-4}$.
\begin{Lem}
\label{lem:psi1_norm_bound}
    Grant Assumption \ref{asn:relax_man}.
    Then for $Y \sim \sfp^*$ it holds that
    \begin{align*}
        \left\|\sup_{s \in \cS(L, W, S, B)}|\ell(s, Y) - \ell(s^*, Y)|\right\|_{\psi_1}
        \lesssim \sigma_{\min}^{-4}D(C_0^2 \vee C_1 \vee 1).
    \end{align*}
\end{Lem}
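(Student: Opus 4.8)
The plan is to bound the supremum of $|\ell(s,Y) - \ell(s^*,Y)|$ pointwise by an explicit random variable whose $\psi_1$-norm we can control directly, and then invoke standard properties of the Orlicz norm. Recall that $\ell(s,Y) = \tfrac12\|s(Y)\|^2 + \rdiv[s](Y)$, so
\[
    \ell(s,Y) - \ell(s^*,Y) = \tfrac12\big(\|s(Y)\|^2 - \|s^*(Y)\|^2\big) + \big(\rdiv[s](Y) - \rdiv[s^*](Y)\big).
\]
For $s \in \cS(L,W,S,B)$ we have $s(x) = -ax + af(x)$ with $a \in (1,\sigma_{\min}^{-2}]$ and $f$ a neural network satisfying $\|f_l\|_{L^\infty} \le C_0$ and $|f_l|_{W^{1,\infty}} \le C_1\sigma_{\min}^{-2}$; likewise $s^*(x) = -x/\sigma^2 + f^*(x)/\sigma^2$ with $\|f^*\|_{L^\infty} \le 1$ (from $\|g^*\|_{L^\infty}\le 1$) and $f^*$ having bounded first derivatives by an analogous computation (using $\sigma \ge \sigma_{\min}$ and the closed form in \eqref{eq:s_star_f_star_def}). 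The key observation is that $\|s(x)\|$ and $\|s^*(x)\|$ are both of the form (linear in $x$) $+$ (bounded), so $\|s(x)\|^2 \lesssim \sigma_{\min}^{-4}(\|x\|^2 + DC_0^2)$ and similarly for $s^*$; and $\rdiv[s](x) = -aD + a\,\rdiv[f](x)$, so $|\rdiv[s](x) - \rdiv[s^*](x)| \lesssim \sigma_{\min}^{-2} D (C_1\sigma_{\min}^{-2} \vee 1) \lesssim \sigma_{\min}^{-4} D (C_1 \vee 1)$, which is a deterministic bound. Hence uniformly over $s \in \cS$,
\[
    \sup_{s \in \cS}|\ell(s,Y) - \ell(s^*,Y)| \lesssim \sigma_{\min}^{-4}\big( \|Y\|^2 + D(C_0^2 \vee C_1 \vee 1) \big).
\]

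It then remains to bound $\big\| \|Y\|^2 \big\|_{\psi_1}$. Under Assumption \ref{asn:relax_man}, $Y = g^*(U) + \sigma\xi$ with $\|g^*(U)\| \le \sqrt{D}$ and $\xi \sim \cN(0,I_D)$, so $\|Y\|^2 \le 2D + 2\sigma^2\|\xi\|^2 \le 2D + 2\|\xi\|^2$. Since $\|\xi\|^2$ is $\chi^2_D$, a direct moment-generating-function computation gives $\E\exp\{\|\xi\|^2/t\} \le 2$ for $t \asymp D$, so $\big\|\|\xi\|^2\big\|_{\psi_1} \lesssim D$ and therefore $\big\|\|Y\|^2\big\|_{\psi_1} \lesssim D$. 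Combining with the previous display and the triangle inequality for $\|\cdot\|_{\psi_1}$ (absorbing the deterministic term, whose $\psi_1$-norm is a constant multiple of itself), we obtain
\[
    \left\| \sup_{s \in \cS} |\ell(s,Y) - \ell(s^*,Y)| \right\|_{\psi_1} \lesssim \sigma_{\min}^{-4}\big( D + D(C_0^2 \vee C_1 \vee 1)\big) \lesssim \sigma_{\min}^{-4} D (C_0^2 \vee C_1 \vee 1),
\]
as claimed.

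The main obstacle I anticipate is being careful about the bounds on $f^*$ and its divergence: one must confirm that $f^*$ in \eqref{eq:s_star_f_star_def} has $L^\infty$-norm at most $1$ and first derivatives bounded by $\cO(\sigma^{-2})$ (this follows from differentiating the ratio of Gaussian integrals and using $\|g^*\|_{L^\infty}\le 1$ together with $\sigma \le 1$, and is essentially the content of \cite[Lemma 4.1]{yakovlev2025generalization}). A second minor point is tracking the precise power of $\sigma_{\min}$: the worst term is $a^2 \le \sigma_{\min}^{-4}$ multiplying either $\|Y\|^2$ or the bounded contributions, and the divergence difference contributes $a \cdot |f|_{W^{1,\infty}} \le \sigma_{\min}^{-2}\cdot C_1\sigma_{\min}^{-2} = C_1\sigma_{\min}^{-4}$; both are $\lesssim \sigma_{\min}^{-4}D(C_0^2 \vee C_1 \vee 1)$, so the stated rate is tight given this argument.
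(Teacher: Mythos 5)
Your proposal is correct and follows essentially the same route as the paper's proof: a pointwise bound $\sup_{s\in\cS}|\ell(s,x)-\ell(s^*,x)|\lesssim \sigma_{\min}^{-4}\|x\|^2+\sigma_{\min}^{-4}D(C_0^2\vee C_1\vee 1)$ obtained from the structure $s(x)=-ax+af(x)$, the constraints in Definition \ref{def:score_class}, and Lemma \ref{lem:score_derivatives} for $s^*$, followed by the $\chi^2(D)$ bound $\big\|\|Y\|^2\big\|_{\psi_1}\lesssim 1+\sigma^2 D$ under Assumption \ref{asn:relax_man}. (The only cosmetic difference is that the paper bounds $|\ell(s,x)|$ and $|\ell(s^*,x)|$ separately rather than decomposing the difference, and uses $\|g^*(U)\|\le 1$ under its Euclidean-norm convention, which only sharpens the same estimate.)
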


We defer the proof of Lemma \ref{lem:psi1_norm_bound} to Appendix \ref{sec:lem_psi1_norm_bound_proof} and move further.

\noindent
\textbf{Step 2: verifying the Bernstein condition.}\quad
The goal of this step is to bound the variance of the excess loss $\ell(s, Y) - \ell(s^*, Y)$ by a function of the excess risk $\E \big[ \ell(s, Y) - \ell(s^*, Y) \big] = 0.5 \|s - s^*\|_{L^2(\sfp^*)}^2$. Recalling the definition of $\ell$ (see \eqref{eq:ism_loss}), we observe that
\[
    \ell(s, Y) - \ell(s^*, Y) = \frac{1}{2} \left( \|s(Y)\|^2 - \|s^*(Y)\|^2 \right) + \rdiv[s - s^*](Y)
    \quad \text{for any $s \in \cS(L, W, S, B)$.}
\]
To bound the variance of the first term in the right-hand side, we use properties of sub-Gaussian and sub-exponential random variables. An upper bound on the variance of the second term follows from Lemma \ref{lem:bernstein_cond_div}. A rigorous result is formulated below.
\begin{Lem}
\label{lem:bernstein_cond}
    For an arbitrary $s \in \cS(L, W, S, B)$, $\alpha \in \N$ with $\alpha \geq 2$ and for $Y \sim \sfp^*$, it holds that
    \begin{align*}
        (i) &\quad \Var[\ell(s, Y) - \ell(s^*, Y)]
        \lesssim \alpha^2 D^{1 + 1 / \alpha}(D + \alpha)\sigma^{-2}\sigma_{\min}^{-4 - 4 / \alpha}(C_0^{2 + 2 / \alpha} \vee C_\alpha^{2 / \alpha} \vee 1)\|s - s^*\|_{L^2(\sfp^*)}^{2 - 2 / \alpha}, \\
        (ii) &\quad \big\| \|\nabla(s - s^*)\|_F \big\|_{L^2(\sfp^*)}^2
        \lesssim \alpha^2 D^{1 + 1 / \alpha}(D + \alpha)\sigma^{-2}\sigma_{\min}^{-4 - 4 / \alpha}(C_0^{2 / \alpha} \vee C_\alpha^{2 / \alpha} \vee 1)\|s - s^*\|_{L^2(\sfp^*)}^{2 - 2 / \alpha} ,
    \end{align*}
    where the hidden constant is absolute and does not depend on $s$.
\end{Lem}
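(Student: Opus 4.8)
The plan is to treat the two statements in parallel, since both reduce to applying the weighted Gagliardo–Nirenberg inequality of Lemma \ref{lem:bernstein_cond_div} to the function $h = s - s^*$, after first checking that $h$ meets the hypotheses of that lemma (namely $|h|_{W^{\alpha,2}(\sfp^*)} < \infty$). For part $(ii)$ this is essentially all there is to it; for part $(i)$ there is an extra decomposition step.

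\emph{Step A: control of $|s - s^*|_{W^{\alpha,2}(\sfp^*)}$.} Write $s(x) = -ax + af(x)$ with $f \in \NN(L,W,S,B)$ and $s^*(x) = -x/\sigma^2 + f^*(x)/\sigma^2$ (using \eqref{eq:s_star_f_star_def}). The linear parts have vanishing derivatives of order $\geq 2$, so for any multi-index $\bk$ with $|\bk| = \alpha \geq 2$ we get $\partial^\bk(s - s^*) = a\,\partial^\bk f - \sigma^{-2}\partial^\bk f^*$. Hence $|s - s^*|_{W^{\alpha,2}(\sfp^*)} \leq a|f|_{W^{\alpha,2}(\sfp^*)} + \sigma^{-2}|f^*|_{W^{\alpha,2}(\sfp^*)}$. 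The first term is finite and bounded by the constraint $|f_l|_{W^{\alpha,2}(\sfp^*)} \leq C_\alpha\sigma_{\min}^{-2\alpha}$ in Definition \ref{def:score_class} (summed over $l$, picking up a $\sqrt{D}$), together with $a \leq \sigma_{\min}^{-2}$; the second term is finite because $f^*$ is analytic with all weighted derivative norms controlled — this is exactly the content of \cite[Lemma 4.1]{yakovlev2025generalization}, which I would cite. Collecting, $\sigma^{2\alpha}|s - s^*|_{W^{\alpha,2}(\sfp^*)}^2 \lesssim D\,\sigma_{\min}^{-4\alpha}\sigma^{2\alpha}(C_\alpha^2 \vee 1) \lesssim D(C_\alpha^2 \vee 1)$ since $\sigma \geq \sigma_{\min}$. (This is where I expect the bookkeeping to be slightly delicate: one must be careful that the $C_\alpha$-dependence enters as $C_\alpha^{2/\alpha}$ in the final bound, which is what makes the $1/\alpha$ exponent on $\sigma_{\min}$ appear after we take the $\alpha$-th root in Lemma \ref{lem:bernstein_cond_div}.)

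\emph{Step B: proof of $(ii)$.} Apply Lemma \ref{lem:bernstein_cond_div}$(ii)$ with $h = s - s^*$: since the bracket $D\|h\|_{L^2(\sfp^*)}^2 + \sigma^{2\alpha}|h|_{W^{\alpha,2}(\sfp^*)}^2$ is at most $D\|h\|_{L^2(\sfp^*)}^2 + O(D(C_\alpha^2 \vee 1))$, and by subadditivity of $x \mapsto x^{1/\alpha}$ on $\R_+$ we may split the $1/\alpha$-power, we obtain
\[
    \big\| \|\nabla h\|_F \big\|_{L^2(\sfp^*)}^2
    \lesssim \frac{\alpha D^{1 - 1/\alpha}}{\sigma^2}\Big( D^{1/\alpha}\|h\|_{L^2(\sfp^*)}^{2/\alpha} + D^{1/\alpha}(C_\alpha^{2/\alpha} \vee 1)\Big)\|h\|_{L^2(\sfp^*)}^{2 - 2/\alpha}.
\]
The first summand gives $\alpha D\sigma^{-2}\|h\|_{L^2(\sfp^*)}^2$, which is dominated (up to $\sigma_{\min}^{-4/\alpha}$ factors, bounded by a constant) by the second since $\|h\|_{L^2(\sfp^*)}$ is bounded over $\cS$ (from the Lipschitz/bounded constraints, or simply because $\|h\|_{L^2(\sfp^*)}^{2/\alpha}$ is controlled). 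After absorbing and using $\sigma \geq \sigma_{\min}$ to match the stated powers of $\sigma_{\min}$, this yields exactly $(ii)$ with the factor $D^{1+1/\alpha}(D+\alpha)$ (the $(D+\alpha)$ being a convenient loosening). I would carry the constants just carefully enough to land on the claimed exponents.

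\emph{Step C: proof of $(i)$.} Write $\ell(s,Y) - \ell(s^*,Y) = \tfrac12(\|s(Y)\|^2 - \|s^*(Y)\|^2) + \rdiv[s - s^*](Y)$, so $\Var \leq 2\Var[\tfrac12(\|s\|^2 - \|s^*\|^2)] + 2\Var[\rdiv[s-s^*]] \leq 2\E[\tfrac14(\|s\|^2 - \|s^*\|^2)^2] + 2\|\rdiv[s-s^*]\|_{L^2(\sfp^*)}^2$. For the divergence term, Lemma \ref{lem:bernstein_cond_div}$(i)$ applied to $h = s - s^*$ gives, after the same splitting as in Step B, a bound of order $\alpha D\sigma^{-2}(C_\alpha^{2/\alpha} \vee 1)\|s-s^*\|_{L^2(\sfp^*)}^{2-2/\alpha}$ (again dropping the subdominant $\alpha D\sigma^{-2}\|s-s^*\|_{L^2}^2$ term). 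For the quadratic term, factor $\|s\|^2 - \|s^*\|^2 = \langle s - s^*, s + s^*\rangle$ pointwise and use Cauchy–Schwarz: $\E[(\|s\|^2-\|s^*\|^2)^2] = \E[\langle s-s^*,s+s^*\rangle^2] \leq \E[\|s-s^*\|^2\|s+s^*\|^4]^{1/1}$... more carefully, $\E[\|s-s^*\|^2\|s+s^*\|^2]$, and then either bound $\|s+s^*\|_{L^\infty}$ is not available, so instead use Hölder: $\E[\|s-s^*\|^2\|s+s^*\|^2] \leq \|s-s^*\|_{L^2(\sfp^*)}^{2-2/\alpha}\cdot\big(\E[\|s-s^*\|^2\|s+s^*\|^{2\alpha}]\big)^{1/\alpha}\cdot(\dots)$; the cleaner route is to note $\|s+s^*\|$ and $\|s-s^*\|$ are both linear-plus-bounded in $Y$, hence have finite sub-exponential ($\psi_1$) norms scaling like $\sigma_{\min}^{-2}D^{1/2}(C_0 \vee 1)$ (essentially the computation already done in Lemma \ref{lem:psi1_norm_bound}), so $\|\,\|s-s^*\|^2\,\|s+s^*\|^2\|_{\psi_{1/2}} \lesssim \sigma_{\min}^{-8}D^2(C_0^4\vee1)$ and one interpolates against $\|s-s^*\|_{L^2(\sfp^*)}^2$ to extract the $\|s-s^*\|_{L^2(\sfp^*)}^{2-2/\alpha}$ factor with a $(C_0^{2+2/\alpha}\vee1)$ prefactor. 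Summing the two contributions and loosening constants to $\alpha^2 D^{1+1/\alpha}(D+\alpha)\sigma^{-2}\sigma_{\min}^{-4-4/\alpha}$ gives $(i)$.

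\emph{Main obstacle.} The genuinely delicate point is the quadratic-term variance in Step C: since neither $s$ nor $s^*$ is bounded on $\R^D$ (both have an unbounded linear part), one cannot simply pull out a sup-norm, and must instead interpolate between the $L^2(\sfp^*)$-norm of $s - s^*$ (which is what the Bernstein condition is allowed to depend on) and a high-moment / sub-exponential bound on $\|s - s^*\|\cdot\|s + s^*\|$ under the Gaussian-convolution measure $\sfp^*$ — arranging the Hölder exponents so that precisely the power $2 - 2/\alpha$ of $\|s-s^*\|_{L^2(\sfp^*)}$ survives and the residual constant is $(C_0^{2+2/\alpha}\vee C_\alpha^{2/\alpha}\vee1)$ with the stated dimension dependence. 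Everything else is a matter of feeding $h = s - s^*$ into Lemma \ref{lem:bernstein_cond_div} and tracking powers of $\sigma_{\min}$, $D$, and $\alpha$.
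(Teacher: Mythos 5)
Your plan coincides with the paper's proof in all essentials: split the variance into the divergence part and the quadratic part $\tfrac12(\|s\|^2-\|s^*\|^2)$, feed $h=s-s^*$ into Lemma~\ref{lem:bernstein_cond_div} (with $\|s-s^*\|^2_{L^2(\sfp^*)}\lesssim \sigma_{\min}^{-4}D(C_0^2\vee 1)$ and the $W^{\alpha,2}(\sfp^*)$-seminorm controlled via the class constraint plus the score-derivative bound of \cite[Lemma 4.1]{yakovlev2025generalization}), and handle the quadratic part by H\"older with exponents $(\alpha/(\alpha-1),\alpha)$ together with sub-exponential ($\chi^2$-type) moment bounds — exactly what the paper does, and claim $(ii)$ is indeed an immediate consequence of part $(ii)$ of the Gagliardo--Nirenberg lemma. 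Two bookkeeping points in your Step A need repair, though neither is conceptual. First, the inequality $\sigma_{\min}^{-4\alpha}\sigma^{2\alpha}\lesssim 1$ ``since $\sigma\ge\sigma_{\min}$'' is false: $\sigma\ge\sigma_{\min}$ and $\sigma_{\min}<1$ give $\sigma^{2\alpha}\sigma_{\min}^{-4\alpha}\ge\sigma_{\min}^{-2\alpha}\ge 1$, so this factor cannot be discarded; you should keep it, and after taking the $1/\alpha$-th root it is precisely what produces the $\sigma_{\min}^{-4-4/\alpha}$ in the stated bound (the paper keeps $|s-s^*|^2_{W^{\alpha,2}(\sfp^*)}\lesssim (D+\alpha)^\alpha(C_\alpha^2\vee1)\sigma_{\min}^{-4\alpha-4}(D+4^\alpha\alpha!)$ and only then takes the root). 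Second, your ``$\lesssim D(C_\alpha^2\vee1)$'' silently drops the factorial growth of $\|\partial^{\bk}s^*\|_{L^2(\sfp^*)}$ coming from the score-derivative lemma (of order $4^\alpha\alpha!\,\sigma^{-4\alpha-4}$); via Stirling its $1/\alpha$-th root contributes a factor $\asymp\alpha$, which is where one of the two powers of $\alpha$ in the final $\alpha^2$ comes from, so it must be tracked rather than absorbed into ``constants.'' With these corrections your route lands exactly on the paper's bounds.
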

We defer the proof of Lemma \ref{lem:bernstein_cond} to Appendix \ref{sec:lem_bernstein_cond_proof} and focus on the covering number of the excess loss class $\cF$.

\noindent
\textbf{Step 3: covering number evaluation.}\quad
Our next goal is to show that the covering number $\cN(\tau, \cF, L^2(\sfP_n))$ of the excess loss class $\cF$ (see \eqref{eq:excess_loss_class}) grows polynomially with respect to $(1/\tau)$. Here and further in this paper, $L^2(\sfP_n)$ stands for the empirical $L^2$-norm. Let us note that $\cN(\cF, L^2(\sfP_n), \eps)$ is the same as the corresponding covering number of the loss class
\begin{equation}
    \label{eq:ism_loss_class}
    \cL = \left\{\ell(s, \cdot) \, : \, s\in\cS(L, W, S, B)\right\}.
\end{equation}
We start with the following result concerning the covering number of $\cL$ with respect to the $L^\infty$-norm on a compact set.

\begin{Lem}
\label{lem:loss_cov_num_bound_true}
    For any $\tau$ satisfying $0 < \tau \leq \cD(\cL, \|\cdot\|_{L^\infty([-R, R]^D)})$ and $R > 0$, the covering number of the class $\cL$ given by \eqref{eq:ism_loss_class} satisfies the inequality
    \[
        \log\cN(\tau, \cL, \|\cdot\|_{L^\infty([-R, R]^D)})
        \lesssim SL \log\left( \tau^{-1}\sigma_{\min}^{-2} D L (R \vee C_0 \vee C_1 \vee 1) (B \vee 1)(\|W\|_\infty + 1) \right).
    \]
\end{Lem}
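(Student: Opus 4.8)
\textbf{Proof plan for Lemma \ref{lem:loss_cov_num_bound_true}.}
The plan is to reduce the covering number of the loss class $\cL$ to covering numbers of its constituent pieces and then invoke the standard complexity bound for the neural network class $\NN(L, W, S, B)$. Recall that every $s \in \cS(L, W, S, B)$ has the form $s(x) = -ax + af(x)$ with $a \in (1, \sigma_{\min}^{-2}]$ and $f = (f_1, \dots, f_D)^\top \in \NN(L, W, S, B)$, and the corresponding loss is $\ell(s, x) = \tfrac{1}{2}\|s(x)\|^2 + \rdiv[s](x) = \tfrac12 \|{-ax} + af(x)\|^2 + (-aD + a\,\rdiv[f](x))$. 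So the loss is a fixed polynomial combination of three ingredients: the scalar $a$, the vector field $f$, and its divergence $\rdiv[f] = \sum_i \partial_i f_i$.

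First I would set up the Lipschitz dependence of $\ell(s, \cdot)$ on the triple $(a, f, \nabla f)$ uniformly on the cube $[-R, R]^D$, using the a priori bounds built into Definition \ref{def:score_class}: $\|f\|_{L^\infty} \leq \sqrt{D}\,C_0$ (componentwise $C_0$), $|f_l|_{W^{1,\infty}} \leq C_1 \sigma_{\min}^{-2}$ so that $\|\nabla f\|$ is controlled, $\|x\| \leq \sqrt{D} R$ on the cube, and $a \leq \sigma_{\min}^{-2}$. Expanding the squared norm and the divergence, the map $(a, f, \nabla f) \mapsto \ell$ is Lipschitz with a constant polynomial in $\sigma_{\min}^{-2}$, $D$, $R$, $C_0$, $C_1$; consequently a $\tau$-cover of $\cL$ in $\|\cdot\|_{L^\infty([-R,R]^D)}$ is obtained from covers of the parameter $a$ (a one-dimensional interval of length $\leq \sigma_{\min}^{-2}$, hence $\log$-covering number $\cO(\log(\tau^{-1}\sigma_{\min}^{-2} \cdot \text{poly}))$) together with a joint cover of $\{(f, \nabla f) : f \in \NN(L,W,S,B)\}$ at scale $\tau' \asymp \tau / \text{poly}$ in the $L^\infty([-R,R]^D)$ norm. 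Then I would invoke the known simultaneous-approximation covering bound for GELU networks and their derivatives from \cite{yakovlev2025simultaneous} (or derive it directly via the weight-perturbation argument: changing all weights by $\eps$ perturbs $f$ and $\nabla f$ on $[-R,R]^D$ by at most $\eps \cdot (\text{poly in }B, \|W\|_\infty, L, R)^{L}$, so taking logarithms gives the factor $SL$ times a logarithm of the stated quantity). The number of free parameters is $\leq S$ and each lies in $[-B, B]$, which produces the $SL\log(\cdots)$ form.

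The main obstacle will be tracking the dependence of the network-plus-derivative covering number on the scale — specifically, that perturbing the weights propagates through $L$ compositions of affine maps and GELU activations, and through one extra differentiation for $\nabla f$, multiplying Lipschitz constants by roughly $(B\|W\|_\infty)^{\cO(L)}$ and by the GELU derivative bounds on $[-R, R]^D$. This is exactly the kind of estimate handled in \cite{yakovlev2025simultaneous}, so I would cite it rather than reprove it; the only care needed is to make the $R$-, $C_0$-, $C_1$-, and $\sigma_{\min}^{-2}$-dependence explicit so it can be absorbed inside the logarithm, which is harmless since $\log$ turns products into sums. Everything else is bookkeeping: collecting the $\cO(\log(\cdots))$ from the $a$-cover and the $\cO(SL\log(\cdots))$ from the network cover, and noting $SL \geq 1$ dominates, yields the claimed bound $\log\cN(\tau, \cL, \|\cdot\|_{L^\infty([-R,R]^D)}) \lesssim SL\log\big(\tau^{-1}\sigma_{\min}^{-2} DL(R \vee C_0 \vee C_1 \vee 1)(B \vee 1)(\|W\|_\infty + 1)\big)$.
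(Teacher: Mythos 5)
Your plan is correct and follows essentially the same route as the paper: a Lipschitz reduction of $\ell(s,\cdot)$ on $[-R,R]^D$ to the triple $(a, f, \rdiv[f])$ using the a priori bounds of Definition \ref{def:score_class}, an interval net for $a$, and a weight-perturbation cover of the network class with the $\binom{\cdot}{S}$-type count of sparsity patterns plus a net of $[-B,B]^S$. The only cosmetic difference is that the paper proves the simultaneous perturbation bound for $f$ and its divergence itself (Lemma \ref{lem:gelu_func_div_prox}, via the $2$-Lipschitzness of GELU and its derivative) rather than citing \cite{yakovlev2025simultaneous}, which is exactly the direct derivation you sketch as your fallback.
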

The proof of Lemma \ref{lem:loss_cov_num_bound_true} is moved to Section \ref{sec:cov_num_bound}.
Let
\[
    R_n = \max\limits_{1 \leq i \leq n} \|Y_i\|.
\]
Then, for any $ 0 < \tau \leq \cD(\cF, L^2(\sfP_n))$, it holds that
\begin{align*}
    \log \cN(\tau, \cF, L^2(\sfP_n))
    = \log \cN(\tau, \cL, L^2(\sfP_n))
    \leq \log \cN\left(\tau, \cL, \|\cdot\|_{L^\infty([-R_n, R_n]^D)}\right),
\end{align*}
where $\cF$ and $\cL$ are defined in \eqref{eq:excess_loss_class} and \eqref{eq:ism_loss_class}, respectively.
Consequently, from Lemma \ref{lem:loss_cov_num_bound_true}, it follows that
\begin{align*}
    \log \cN(\tau, \cL, \|\cdot\|_{L^\infty([-R_n, R_n]^D)})
    &
    \lesssim SL \log(D_n / \tau),
\end{align*}
where we defined
\begin{equation}
    \label{eq:D_n}
    D_n = L D \sigma_{\min}^{-2}(R_n \vee C_0 \vee C_1 \vee 1) (B \vee 1)(\|W\|_\infty + 1) .
\end{equation}
Furthermore, the third moment of $\log D_n$ satisfies the inequality
\[
    (\E \log^3 D_n)^{1 / 3}
    \lesssim \log\big(D \sigma_{\min}^{-2}(C_0 \vee C_1 \vee 1) L (B \vee 1)(\|W\|_\infty + 1) \big)
    + \left( \E \max_{1 \leq i \leq n}\log^3 (\|X_i\| \vee 1) \right)^{1 / 3} .
\]
The second term in the right-hand side admits the following bound derived in Appendix \ref{sec:lem_exp_max_log_data_proof}.

\begin{Lem}
\label{lem:exp_max_log_data}
    Grant Assumption \ref{asn:relax_man} and assume $n \geq 2$.
    Then it holds that
    \begin{align*}
        \E\max_{1 \leq i \leq n}\log^3 (\|Y_i\| \vee 1) \lesssim \log^3(1 + \sigma^2 D)\sqrt{\log(2n)}.
    \end{align*}
\end{Lem}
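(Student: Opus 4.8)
The plan is to control the maximum of $\log^3(\|Y_i\|\vee 1)$ by first establishing a sub-exponential (or sub-Gaussian on the log scale) tail bound for a single $\log(\|Y_0\|\vee 1)$, and then converting this into a bound on the expected maximum of $n$ i.i.d.\ copies via the standard union-bound / integral argument. Under Assumption~\ref{asn:relax_man} we have $Y_0 = g^*(U) + \sigma\xi$ with $\|g^*\|_{L^\infty}\le 1$ and $\xi\sim\cN(0,I_D)$, so $\|Y_0\|\le 1 + \sigma\|\xi\|$ almost surely. The first step is therefore to recall the concentration of the norm of a standard Gaussian vector: $\|\xi\|$ is $1$-Lipschitz in $\xi$, so $\p(\|\xi\|\ge \sqrt{D} + t)\le e^{-t^2/2}$, and in particular $\p(\|\xi\| \ge u) \le e^{-(u-\sqrt D)^2/2}$ for $u \ge \sqrt D$. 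Hence for $r$ large enough (say $r \ge 2(1+\sigma\sqrt D)$) one gets $\p(\|Y_0\|\vee 1 \ge r) \le \p(\sigma\|\xi\|\ge r-1) \le \exp\{-c\, r^2/\sigma^2\}$ for an absolute constant $c$, using $1 \le r/2$ and $\sqrt D \le r/(2\sigma)$.

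The second step is the extreme-value computation. Let $M_n = \max_{1\le i\le n}\log(\|Y_i\|\vee 1)$. Writing $L_0 = \log(1+\sigma^2 D)$ (the natural scale), one splits $\E M_n^3$ at a threshold of the form $t_0 = C(L_0 + \sqrt{\log(2n)})$ chosen so that for $t \ge t_0$ the tail $\p(M_n \ge t) \le n\,\p(\log(\|Y_0\|\vee 1)\ge t) \le n\exp\{-c\,e^{2t}/\sigma^2\}$ is summable against $t^2\,dt$ and contributes a lower-order term. On the complementary event $M_n^3 \le t_0^3$, and since $t_0^3 \lesssim (L_0 + \sqrt{\log(2n)})^3 \lesssim L_0^3 + (\log(2n))^{3/2}$, the dominant contribution is $L_0^3\sqrt{\log(2n)}$ once one checks that the $\log^{3/2}(2n)$ term is itself $\lesssim \log^3(1+\sigma^2 D)\sqrt{\log(2n)}$ — this uses $1 + \sigma^2 D \ge $ some absolute constant (indeed $D \ge 1$), so that $L_0 \gtrsim 1$ and $(\log 2n)^{3/2} = (\log 2n)\cdot\sqrt{\log 2n} \lesssim L_0^3 \sqrt{\log 2n}$ whenever $\log(2n) \lesssim L_0^3$; in the regime $\log(2n) \gg L_0^3$ one instead bounds the tail integral more carefully, noting the doubly-exponential decay $\exp\{-c e^{2t}/\sigma^2\}$ kicks in already at $t \asymp \log\log n$, which is much smaller than $\sqrt{\log n}$, so the maximum is in fact $O(\log\log n + L_0)$ with overwhelming probability and the claimed bound holds with room to spare.

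The main obstacle is purely bookkeeping: matching the two regimes (small ambient dimension, where $\log(2n)$ dominates, versus large $D$, where $L_0$ dominates) so that the single clean bound $\log^3(1+\sigma^2 D)\sqrt{\log(2n)}$ covers both, while keeping all constants absolute. The doubly-exponential tail of $\log\|Y_0\|$ is a help rather than a hindrance here — it means $M_n$ barely grows with $n$ — but one must be careful that the threshold $t_0$ is taken large enough (an absolute multiple of $\log\log n + L_0$) for the union bound to beat the doubly-exponential, and then verify $t_0^3\lesssim \log^3(1+\sigma^2 D)\sqrt{\log(2n)}$ in all cases. I would present this via the identity $\E M_n^3 = \int_0^\infty 3t^2\,\p(M_n > t)\,dt$, split at $t_0$, bound the low part by $t_0^3$ and the high part by $\int_{t_0}^\infty 3t^2 n e^{-c e^{2t}/\sigma^2}\,dt$, and finally substitute $u = e^{2t}$ to see the latter integral is $O(1)$ (or even exponentially small in $n$), absorbed into the main term.
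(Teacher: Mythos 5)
Your route is genuinely different from the paper's. The paper proves this in three lines of Orlicz-norm bookkeeping: it invokes the maximal inequality for $\psi_2$ random variables (\cite[Exercise 2.5.10]{vershynin2018high}), so that $\E\max_i\log^3(\|Y_i\|\vee 1)\lesssim \|\log^3(\|Y_1\|\vee 1)\|_{\psi_2}\sqrt{\log(2n)}$, and then bounds $\|\log^3(\|Y_1\|\vee 1)\|_{\psi_2}\lesssim\log^3(1+\sigma^2D)$ by first showing $\big\|\|Y_1\|^2\vee 1\big\|_{\psi_1}\lesssim 1+\sigma^2D$. You instead work with an explicit Gaussian tail for $\|Y_0\|$, a union bound for the maximum, and a split of the tail integral at a threshold $t_0$; both arguments ultimately rest on the same sub-Gaussianity of $\|Y_1\|$, yours being the more elementary but longer version, the paper's the shorter one that hides the extreme-value computation inside a cited maximal inequality.

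There is, however, one concrete flaw in your bookkeeping: the claim that ``$1+\sigma^2D\ge$ some absolute constant (indeed $D\ge 1$), so that $L_0\gtrsim 1$'' is false. Since $\sigma\in[0,1)$ may be arbitrarily small, $D\ge 1$ only gives $1+\sigma^2D\ge 1$, and $L_0=\log(1+\sigma^2D)$ can be arbitrarily close to $0$. Your absorption steps genuinely use $L_0\gtrsim 1$: the bound $(\log 2n)^{3/2}\lesssim L_0^3\sqrt{\log 2n}$ needs it, and so does absorbing the unavoidable $O(1)$ contributions (your single-variable tail bound is only valid for $r$ above an absolute constant times $1+\sigma\sqrt D$, so $t_0\gtrsim 1$, and the residual tail integral is $O(1)$), which cannot be dominated by $L_0^3\sqrt{\log 2n}$ when $L_0\to 0$. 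Your fallback regime (threshold $\asymp L_0+\log\log n$, then $\log^3\log(2n)\lesssim\sqrt{\log 2n}$) correctly fixes the $n$-dependence when $\log(2n)\gg L_0^3$, but it does not repair the additive-constant issue for tiny $\sigma^2D$. To be fair, the paper's own proof implicitly relies on the same lower bound (the step $\log^6\log(1/\delta)\lesssim\log^6(1+\sigma^2D)(1+\log(1/\delta))$ requires $\log(1+\sigma^2D)\gtrsim 1$), so the statement is really only accurate with, say, $\log^3(e+\sigma^2D)$ on the right-hand side; under that reading (or assuming $\sigma^2D$ bounded below) your two-regime argument closes, up to minor constant adjustments such as requiring $r\ge 4(1+\sigma\sqrt D)$ rather than $2(1+\sigma\sqrt D)$ so that subtracting $\sqrt D$ in the Gaussian deviation bound still leaves a term of order $r/\sigma$.
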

Therefore, by Lemma \ref{lem:exp_max_log_data}, we have that
\begin{align*}
    (\E \log^3 D_n)^{1 / 3}
    \lesssim \log(D \sigma_{\min}^{-2}(C_0 \vee C_1 \vee 1) L (B \vee 1)(\|W\|_\infty + 1)) \log(2n).
\end{align*}
We will use this fact on the next step of the proof.

\noindent
\textbf{Step 4: tail inequality.}\quad
We are in position to apply Theorem \ref{th:tail_ineq_unb_new} to the excess loss class $\cF$. This theorem extends a localization argument (see, for instance, \citep{bartlett2005local}) to the case of unbounded empirical processes. In view of Steps 1--3, $\cF$ meets the conditions of Theorem \ref{th:tail_ineq_unb_new} with $\varkappa = 1 - 1/\alpha$ (see Step 2), $A = \cO(1)$, $\zeta = \cO(SL)$, and $D_n$ given by \eqref{eq:D_n} (see Step 3).
Then, in view of Remark \ref{rem:erm_localization_bound}, taking $\eps = 1/2$, we obtain that for any $\delta \in (0, 1)$, with probability at least $1 - \delta$, the empirical risk minimizer $\hat s$ satisfies the inequality
\begin{align*}
    \E_{Y \sim \sfp^*} \big[ \ell(\hat{s}, Y) - \ell(s^*, Y) \big]
    &
    \lesssim \inf_{s \in \cS(L, W, S, B)} \E[\ell(s, Y) - \ell(s^*, Y)] + \left( B_\alpha \Upsilon(n, \delta) \right)^{\alpha / (\alpha + 1)}
    \\&\quad 
    + \left( \left\| \sup_{s \in \cS(L, W, S, B)} \big|\ell(s, Y) - \ell(s^*, Y) \big| \right\|_{\psi_1} \vee 1 \right) \Upsilon(n, \delta) \log n ,
\end{align*}
where
\begin{align}
\label{eq:ism_upsilon_bound_gen_aux}
    \Upsilon(n, \delta) \lesssim \frac{SL}n \log\big( D\sigma_{\min}^{-2}(C_0 \vee C_1 \vee 1) L (B \vee 1)(\|W\|_\infty + 1) \big) \log(2n) + \frac{\log(e / \delta)}n
\end{align}
and the value of $B_\alpha$ is given by Lemma \ref{lem:bernstein_cond}:
\begin{align}
\label{eq:B_alpha_def}
    B_\alpha = \alpha^2 D^{1 + 1 / \alpha}(D + \alpha)\sigma^{-2}\sigma_{\min}^{-4 - 4 / \alpha}(C_0^{2 + 2 / \alpha} \vee C_\alpha^{2 / \alpha} \vee 1) .
\end{align}
Note that, according to Lemma \ref{lem:psi1_norm_bound}, we have
\[
    \left\| \sup_{s \in \cS(L, W, S, B)} \big|\ell(s, Y) - \ell(s^*, Y) \big| \right\|_{\psi_1}
    \lesssim \sigma_{\min}^{-4}D(C_0^2 \vee C_1 \vee 1).
\]
Taking this into account and recalling the identity
$\E[\ell(s, Y) - \ell(s^*, Y)] = 0.5 \|s - s^*\|^2_{L^2(\sfp^*)}$ for all $s \in \cS(L, W, S, B)$ (see \eqref{eq:ism_loss_diff_fisher}), we obtain that
\begin{align}
\label{eq:gen_bound_oracle}
    \notag
    \|\hat{s} - s^*\|^2_{L^2(\sfp^*)}
    &\lesssim \inf_{s \in \cS(L, W, S, B)}\|s - s^*\|^2_{L^2(\sfp^*)} + \left( B_\alpha \Upsilon(n, \delta) \right)^{1 / (2 - \varkappa)}
    \\&\quad
    + \sigma_{\min}^{-4}D(C_0^2 \vee C_1 \vee 1) \Upsilon(n, \delta) \log n
\end{align}
with probability at least $1 - \delta$.

\noindent
\textbf{Step 5: deriving the final generalization bound.}\quad
It remains to bound the approximation error in the right-hand side of \eqref{eq:gen_bound_oracle}.
It easily follows from a recent result of \cite{yakovlev2025simultaneous} (see Theorem \ref{thm:approx_main}).
Let $\eps \in (0, 1)$ stand for a the precision parameter $\eps \in (0, 1)$ to be determined a bit later and let us set the architecture $(L, W, S, B)$ as specified in Theorem \ref{thm:approx_main} applied with $m = \alpha$.
Therefore, it suffices to take
\begin{align}
\label{eq:C_0_1_alpha_def}
    \quad C_j \asymp \exp\left\{\cO \left(j^2 \log\left(\alpha D \log\left(\frac1{\eps} \right) \log \frac1{\sigma_{\min}^{2}} \right) \right) \right\}, \quad j \in \{0, 1, \alpha\}
\end{align}
in the definition of the score estimators class (see Definition \ref{def:score_class}) to ensure that
\begin{align*}
    \inf_{s \in \cS(L, W, S, B)} \|s - s^*\|_{L^2(\sfp^*)}^2
    \lesssim D^2\sigma^{-8}\eps^{2\beta}\log^2(1 / \eps) \log^2(\alpha D \sigma^{-2}) .
\end{align*}
In addition, from \eqref{eq:ism_upsilon_bound_gen_aux} and Theorem \ref{thm:approx_main} we deduce that
\begin{align*}
    n\Upsilon(n, \delta)
    &\lesssim SL \log\left( Dn\sigma_{\min}^{-2} L (B \vee 1)(\|W\|_\infty + 1)\alpha \log(1 / \eps) \right)  + \log(e / \delta) \\
    &\lesssim \frac{D^{24 + P(d, \beta)}\alpha^{217 + 17P(d, \beta)}}{\eps^{d} \sigma_{\min}^{48 + 4P(d, \beta)}} \left( \log\left(\frac{\alpha D}{\sigma_{\min}^{2}} \right) \log\frac1\eps \right)^{65 + 4P(d, \beta)} \log(2n)
    + \log(e / \delta) .
\end{align*}
Furthermore, $B_\alpha$ given in \eqref{eq:B_alpha_def} is evaluated as
\begin{align*}
    B_\alpha \lesssim \frac{\alpha^2 D^{1 + 1 / \alpha}}{\sigma^{2}\sigma_{\min}^{4 + 4 / \alpha}} \exp\left\{\cO \left(\alpha  \log\left(\alpha D \log\left(\frac1{\eps} \right) \log \frac1{\sigma_{\min}^{2}} \right) \right) \right\} .
\end{align*}
Therefore, we obtain from \eqref{eq:gen_bound_oracle} that, with probability at least $1 - \delta$,
\begin{align*}
    \|\hat{s} - s^*\|^2_{L^2(\sfp^*)}
    &\lesssim D^{\cO(1)}\left\{ \sigma^{-8}\eps^{2\beta}
    + \left(\frac{\eps^{-d} \log(e / \delta)}{n \cdot \sigma_{\min}^{52 + 4 / \alpha + 4P(d, \beta)}\sigma^{2}} \right)^{\alpha / (\alpha + 1)} \right\} \sL(\eps, \sigma_{\min}, \alpha, D, n),
\end{align*}
where we introduced
\begin{align*}
    \sL(\eps, \sigma_{\min}, \alpha, D, n)
    &
    = \left(\alpha \log\left(\frac1\eps \right) \log\left( \frac1{\sigma_{\min}^{2}} \right)
    \log n \right)^{\cO(1)}
    \\&\quad
    \cdot \exp \left\{ \cO\left( \alpha \log(\alpha D) + \alpha\log\log\left(\frac1\eps \right) + \alpha \log \log\left(\frac1{\sigma_{\min}^{2}} \right) \right) \right\} .
\end{align*}
Hence, setting $\alpha \asymp \sqrt{\log n} + \sqrt{\log(\sigma_{\min}^{-2})}$ ensures that, with probability at least $1 - \delta$,
\begin{align*}
    \|\hat{s} - s^*\|_{L^2(\sfp^*)}^2
    &\lesssim \left\{\sigma^{-8} \eps^{2\beta} + \frac{\eps^{-d} \log(e / \delta)}{n \cdot \sigma_{\min}^{52 + 4P(d, \beta)}\sigma^2}\right\} \sL(\eps, \sigma_{\min}, D, n),
\end{align*}
where
\begin{align*}
    \sL(\eps, \sigma_{\min}, D, n)
    = (D \log n \log(1 / \eps) \log(\sigma_{\min}^{-2}))^{\cO(\sqrt{\log n} + \sqrt{\log(\sigma_{\min}^{-2})})} .
\end{align*}
Choosing $\eps = (n \sigma_{\min}^{52 + 4P(d, \beta)})^{-1 / (2 \beta + d)} \in (0, 1)$ yields that, with probability at least $1 - \delta$,
\begin{align}
\label{eq:score_estim_final_proof}
    \|\hat{s} - s^*\|_{L^2(\sfp^*)}^2
    \lesssim \sigma^{-8} (n \sigma_{\min}^{52 + 4P(d, \beta)})^{-\frac{2\beta}{2\beta + d}} \log(e / \delta) \sL(\sigma_{\min}, D, n),
\end{align}
where
\begin{align}
\label{eq:log_fact_estim_final_proof}
    \sL(\sigma_{\min}, D, n)
    = (D \log n \log(\sigma_{\min}^{-2}))^{\cO\left(\sqrt{\log n} + \sqrt{\log(\sigma_{\min}^{-2})} \right)} .
\end{align}
Note that we also have to make sure that the choice of $\eps$ should be sufficiently small according to Theorem \ref{thm:approx_main}.
Thus, we have that
\begin{align*}
    n\sigma_{\min}^{52 + 4P(d, \beta)}
    \gtrsim 1 \vee \left\{ D \sigma^{-2} \alpha^2 \log(n\alpha D \sigma^{-2}) \right\}^{\frac{2\beta + d}{\beta}}
    \vee \left\{ D\sigma^{-2}\alpha^2\log(n\alpha D\sigma^{-2}) \right\}^{2\beta + d} .
\end{align*}
Taking into account the choice of $\alpha$, we deduce that it suffices to ensure that
\begin{align*}
    n\sigma_{\min}^{52 + 4P(d, \beta)}
    \gtrsim 1 \vee \left\{ D\sigma^{-2} (\log n + \log(\sigma_{\min}^{-2}) ) \log(nD\sigma^{-2}\log(\sigma_{\min}^{-2})) \right\}^{\frac{2\beta + d}{\beta \wedge 1}} .
\end{align*}
Finally, it remains to derive the parameters of the score estimators class $\cS(L, W, S, B)$.
Given the choice of $\eps$, we deduce from \eqref{eq:C_0_1_alpha_def} that
\begin{align}
\label{eq:C_0_1_alpha_final_def}
    C_0 \asymp 1,
    \quad C_1 \asymp (D \log n \log\sigma_{\min}^{-2})^{\cO(1)},
    \quad C_\alpha \asymp (D \log n \log\sigma_{\min}^{-2})^{\cO(\log(n\sigma_{\min}^{-2}))} .
\end{align}
Furthermore, we utilize the result from Theorem \ref{thm:approx_main} to specify the parameters of the neural network architecture.
Specifically, we have
\[
    L \lesssim \log \frac{nD}{\sigma_{\min}^{2}},
    \quad
    \log B \lesssim D^8 (\log n)^{65} (\log D)^{26} \left( \log\frac1{\sigma_{\min}^{2}} \right)^{69},
\]
and
\[
    \|W\|_\infty \vee S \lesssim D^{16 + P(d, \beta)} \left( n\sigma_{\min}^{52 + 4P(d, \beta)} \right)^{\frac{d}{2\beta + d}} \sigma_{\min}^{-48 - 4P(d, \beta)} \left( \log\frac{nD}{\sigma_{\min}^{2}} \right)^{142 + 17P(d, \beta)} .
\]
Here, $P(d, \beta) = \binom{d + \floor{\beta}}{d}$.

\noindent
\textbf{Step 6: Jacobian matrix estimation.}\quad
The generalization bound for the Jacobian estimator $\nabla\hat{s}$ is a straightforward implication of the above results.
Recall from Lemma \ref{lem:bernstein_cond} that
\begin{align*}
    \big\| \|\nabla(\hat{s} - s^*)\|_F \big\|^2_{L^2(\sfp^*)}
    \lesssim \frac{\alpha^2 D^{1 + 1 / \alpha}(D + \alpha)}{\sigma^{2}\sigma_{\min}^{4 + 4 / \alpha}} \left(C_0^{2 / \alpha} \vee C_\alpha^{2 / \alpha} \vee 1 \right) \|s - s^*\|_{L^2(\sfp^*)}^{2 - 2 / \alpha} .
\end{align*}
Therefore, from \eqref{eq:score_estim_final_proof}, \eqref{eq:log_fact_estim_final_proof}, and \eqref{eq:C_0_1_alpha_final_def} we conclude that, with probability at least $1 - \delta$,
\begin{align*}
    \big\| \|\nabla(\hat{s} - s^*)\|_F \big\|^2_{L^2(\sfp^*)}
    &
    \lesssim \sigma^{-2}\sigma_{\min}^{-4} \left(D \log n \log \sigma_{\min}^{-2} \right)^{\cO \left(\sqrt{\log n} + \sqrt{\log(\sigma_{\min}^{-2})} \right)} \|\hat{s} - s^*\|_{L^2(\sfp^*)}^{2 - 2 / \alpha} \\
    &
    \lesssim \sigma^{-10}\sigma_{\min}^{-4} \left( n \sigma_{\min}^{52 + 4P(d, \beta)} \right)^{-\frac{2\beta}{2\beta + d}} \log(e / \delta) \sL(\sigma_{\min}, D, n) .
\end{align*}
The proof is finished.

\myendproof

\bibliographystyle{abbrvnat}
\bibliography{references}

\appendix

\section{Proof of Lemma \ref{lem:bernstein_cond_div}}
\label{sec:lem_bernstein_cond_div_proof}

Let $f : (z, u) \mapsto h\big(\sigma z + g^*(u)\big)$ for $z \in \R^D$ and $u \in [0, 1]^d$.
Next, we note that for all $\bk\in\Z_+^D$ with $0 \leq |\bk| \leq \alpha$ it holds that
\begin{align*}
    \integral{[0,1]^d} \left\|\partial^\bk f(\cdot, u)\right\|^2_{L^2(\phi_D)} \dd u =  \sigma^{2|\bk|}  \left\|\partial^\bk h\right\|^2_{L^2(\sfp^*)} .
\end{align*}
Thus, we conclude that
\begin{equation}
\label{eq:norms_relations}
\begin{split}
    &
    \integral{[0,1]^d}\|f(\cdot, u)\|^{2}_{L^2(\phi_D)} \dd u
    = \|h\|^{2}_{L^2(\sfp^*)},
    \quad 
    \integral{[0,1]^d} \|\rdiv[f](\cdot, u)\|^{2}_{L^2( \phi_D)} \dd u = \sigma^2\|\rdiv[h]]\|^{2}_{L^2(\sfp^*)},
    \\&
    \integral{[0,1]^d} |f(\cdot, u)|^{2}_{W^{\alpha, 2}(\phi_D)} \dd u = \sigma^{2\alpha} |h|^{2}_{W^{\alpha, 2}(\sfp^*)},
    \quad
    \integral{[0, 1]^d} \big\| \|\nabla f(\cdot, u)\|_F \big\|_{L^2(\phi_D)}^2 \dd u = \sigma^2 \big\| \|\nabla h\|_F \big\|_{L^2(\sfp^*)}^2 .
\end{split}
\end{equation}
We now reduce the problem to establishing the desired property for the standard Gaussian weight function.

\begin{Lem}
\label{lem:bernstein_cond_helper}
    For an arbitrary $f: \R^D \to \R^D$ such that $|f|_{W^{\alpha, 2}\left(\phi_D\right)} < \infty$  for some integer $\alpha \geq 2$, it holds that
    \begin{align*}
        (i)
        &\quad
        \|\rdiv[f]\|_{L^2(\phi_D)}^2
        \leq \alpha D \left(\|f\|_{L^2(\phi_D)}^2 + |f|_{W^{\alpha, 2}(\phi_D)}^2\right)^{1 / \alpha} \|f\|_{L^2(\phi_D)}^{2(\alpha - 1) / \alpha},
        \\
        (ii)
        &\quad
        \big\| \|\nabla f\|_F \big\|_{L^2(\phi_D)}^2
        \leq \alpha D^{1 - 1/\alpha} \left( D\|f\|_{L^2(\phi_D)}^2 + |f|_{W^{\alpha, 2}(\phi_D)}^2 \right)^{1 / \alpha} \|f\|_{L^2(\phi_D)}^{2(\alpha - 1) / \alpha} .
    \end{align*}
\end{Lem}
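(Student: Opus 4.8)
The plan is to work in the Hermite basis $\{\cH_\bk : \bk \in \Z_+^D\}$, which is complete and orthogonal in $L^2(\phi_D)$, and to reduce everything to a scalar interpolation inequality for the Fourier--Hermite coefficients. Write each component $f_i = \sum_{\bk} c_{i,\bk} \cH_\bk / \bk!$ (with the normalization chosen so that $\|f_i\|_{L^2(\phi_D)}^2 = \sum_\bk c_{i,\bk}^2 / \bk!$); the crucial structural fact is that differentiation acts diagonally, $\partial^\bp \cH_\bk = \frac{\bk!}{(\bk-\bp)!}\cH_{\bk-\bp}$, so $\partial_j f_i$ has Hermite coefficients that are shifts of those of $f_i$ scaled by a factor like $k_j$. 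Consequently $\|\partial_j f_i\|_{L^2(\phi_D)}^2$, $\|\partial^\bk f_i\|_{L^2(\phi_D)}^2$ for $|\bk|=\alpha$, and so on all become explicit weighted sums $\sum_\bk w(\bk)\, c_{i,\bk}^2 / \bk!$ with polynomial weights $w$. The point is that the first-derivative weight grows linearly in $|\bk|$ while the $\alpha$-th derivative Sobolev seminorm controls a weight growing like $|\bk|^\alpha$, which is exactly the setup for an $\ell^p$-interpolation (Hölder) inequality.

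Concretely, for $(ii)$ I would expand $\big\|\|\nabla f\|_F\big\|_{L^2(\phi_D)}^2 = \sum_{i=1}^D \sum_{j=1}^D \|\partial_j f_i\|_{L^2(\phi_D)}^2$. After passing to Hermite coefficients, the inner double sum over $j$ and over $\bk$ produces a factor $\sum_{j} (\text{something like } k_j) = |\bk|$ times $c_{i,\bk}^2/\bk!$, so $\big\|\|\nabla f\|_F\big\|_{L^2(\phi_D)}^2 = \sum_i \sum_\bk |\bk|\, c_{i,\bk}^2/\bk!$ (up to a harmless reindexing of the shifted multi-index). Then apply Hölder with exponents $\alpha$ and $\alpha/(\alpha-1)$ to split $|\bk| = |\bk|^{1/\alpha}\cdot 1^{(\alpha-1)/\alpha}$ against the measure $c_{i,\bk}^2/\bk!$: this gives $\big(\sum_\bk |\bk|^\alpha c_{i,\bk}^2/\bk!\big)^{1/\alpha}\big(\sum_\bk c_{i,\bk}^2/\bk!\big)^{(\alpha-1)/\alpha}$. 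The first factor must be bounded by $\|f_i\|_{L^2(\phi_D)}^2 + |f_i|_{W^{\alpha,2}(\phi_D)}^2$ up to a dimension-dependent combinatorial constant — here one uses that $|\bk|^\alpha \lesssim D^{\alpha-1}\sum_{|\bp|=\alpha}\binom{\alpha}{\bp}(\text{falling factorials of }\bk)$, the multinomial-type identity that matches the weight appearing in $\sum_{|\bp|=\alpha}\|\partial^\bp f_i\|^2_{L^2(\phi_D)}$, plus the low-order term $|\bk|^\alpha \le$ const when $|\bk|$ is small is absorbed into $\|f_i\|^2$. Summing over $i$ and using $\sum_i a_i^{1/\alpha} b_i^{(\alpha-1)/\alpha} \le (\sum_i a_i)^{1/\alpha}(\sum_i b_i)^{(\alpha-1)/\alpha}$ (Hölder again, or just concavity) collapses the bound to the stated form with the extra $D^{1-1/\alpha}$ coming from applying Hölder across the $D$ coordinates $j$ of the gradient before the per-component estimate. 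For $(i)$, $\rdiv[f] = \sum_i \partial_i f_i$, so $\|\rdiv[f]\|_{L^2(\phi_D)}^2 \le D \sum_i \|\partial_i f_i\|_{L^2(\phi_D)}^2 \le D\,\big\|\|\nabla f\|_F\big\|_{L^2(\phi_D)}^2$, and then one either reruns the interpolation directly or reuses $(ii)$, being slightly more careful to get the clean constant $\alpha D$ rather than $\alpha D^{2-1/\alpha}$; direct computation via $\|\partial_i f_i\|^2$ having Hermite weight $k_i$ and $\sum_i k_i = |\bk|$ is cleanest.

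The main obstacle I anticipate is bookkeeping the combinatorial constants so that the weight $|\bk|^\alpha$ is dominated by $\sum_{|\bp|=\alpha}(\text{weight of }\partial^\bp)$ with a constant that is exactly $D^{\alpha-1}$ (or $D^{\alpha}$, matching the two different-looking right-hand sides of $(i)$ and $(ii)$) and no worse — i.e.\ making sure no spurious factor of $\alpha!$ or $D^\alpha$ sneaks in, since the lemma is stated with a tight dimension dependence that the paper later emphasizes is only polynomial. The technical heart is the identity relating $\sum_{|\bp|=\alpha}\frac{\alpha!}{\bp!}\prod_j \frac{k_j!}{(k_j-p_j)!}$ (the falling-factorial weight that $|f_i|_{W^{\alpha,2}(\phi_D)}^2$ really carries) to $|\bk|^\alpha = (\sum_j k_j)^\alpha$ via the multinomial theorem and the elementary bound $k_j(k_j-1)\cdots(k_j-p_j+1) \le k_j^{p_j}$, together with a splitting $|\bk| < \alpha$ versus $|\bk| \ge \alpha$ to handle the regime where the falling factorials vanish. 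Everything else — completeness of the Hermite system, the two Hölder applications, summing over components — is routine once that weight comparison is in hand.
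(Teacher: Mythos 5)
Your plan matches the paper's proof essentially step for step: expand in the Hermite basis, write $\|f\|_{L^2(\phi_D)}^2$, $\big\|\|\nabla f\|_F\big\|_{L^2(\phi_D)}^2$, $\|\rdiv[f]\|_{L^2(\phi_D)}^2$ and $|f|_{W^{\alpha,2}(\phi_D)}^2$ as weighted coefficient sums, apply H\"older with exponents $\alpha$ and $\alpha/(\alpha-1)$, and split small versus large degree so that $k^\alpha \leq \alpha^\alpha\, k!/(k-\alpha)!$ matches the Sobolev weight. The combinatorial bookkeeping you worry about is avoided in the paper by never aggregating the weight to $|\bk|$: it keeps the per-coordinate weight $k_j$ (resp.\ $k_i$ for the divergence) and compares only against the diagonal multi-indices $\bp = \alpha\be_j$ of the seminorm, so no multinomial/Chu--Vandermonde identity is needed and no spurious $\alpha!$ or $D^\alpha$ appears --- your alternative of bounding $|\bk|^\alpha \leq D^{\alpha-1}\sum_j k_j^\alpha$ first yields the same constants.
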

The proof of Lemma \ref{lem:bernstein_cond_helper} can be found in Appendix \ref{sec:lem_bernstein_cond_helper_proof}.
By Lemma \ref{lem:bernstein_cond_helper} and Hölder's inequality, we obtain
\begin{align*}
    \integral{[0,1]^d} \|\rdiv[f](\cdot, u)\|^2_{L^2(\phi_D)} \dd u
    &
    \leq \alpha D \integral{[0, 1]^d} \left(\|f(\cdot, u)\|_{L^2(\phi_D)}^2 + |f(\cdot, u)|_{W^{\alpha, 2}(\phi_D)}^2 \right)^{1 / \alpha} \|f(\cdot, u)\|_{L^2(\phi_D)}^{2(1 - \alpha) / \alpha} \dd u .
\end{align*}
Now the Hölder inequality yields
\begin{align*}
    &
    \integral{[0,1]^d} \|\rdiv[f](\cdot, u)\|^2_{L^2(\phi_D)} \dd u
    \\&
    \leq \alpha D \left( \; \integral{[0,1]^d} \left( \|f(\cdot, u)\|_{L^2(\phi_D)}^2 + |f(\cdot, u)|^{2}_{W^{\alpha, 2}(\phi_D)} \right) \dd u \right)^{1 / \alpha} \left( \; \integral{[0,1]^d} \|f(\cdot, u)\|^{2}_{L^2(\phi_D)} \dd u \right)^{1 - 1 / \alpha} .
\end{align*}
Combining this with \eqref{eq:norms_relations}, we deduce that
\begin{align*}
    \|\rdiv[h]\|_{L^2(\sfp^*)}^2 \leq \alpha D \sigma^{-2} \left( \|h\|_{L^2(\sfp^*)}^2 + \sigma^{2\alpha}|h|_{W^{\alpha, 2}(\sfp^*)}^2 \right)^{1 / \alpha} \|h\|_{L^2(\sfp^*)}^{2 - 2 / \alpha}.
\end{align*}
Hence, statement $(i)$ is established.
As for the second claim, Lemma \ref{lem:bernstein_cond_helper} suggests that
\begin{align*}
    &
    \integral{[0, 1]^d} \big\| \|\nabla f(\cdot, u)\|_F \big\|_{L^2(\phi_D)}^2 \dd u
    \\&
    \leq \alpha D^{1 - 1 / \alpha}\integral{[0, 1]^d} \left( D\|f(\cdot, u)\|_{L^2(\phi_D)}^2 + |f(\cdot, u)|_{W^{\alpha, 2}(\phi_D)}^2 \right)^{1 / \alpha} \|f(\cdot, u)\|_{L^2(\phi_D)}^{2(\alpha - 1) / \alpha} \dd u
    \\&
    \leq \alpha D^{1 - 1 / \alpha} \left( \, \integral{[0, 1]^d}\left( D\|f(\cdot, u)\|_{L^2(\phi_D)}^2 + |f(\cdot, u)|_{W^{\alpha, 2}(\phi_D)}^2 \right) \dd u \right)^{1 / \alpha}
    \left( \, \integral{[0, 1]^d} \|f(\cdot, u)\|_{L^2(\phi_D)}^2 \dd u \right)^{1 - 1 / \alpha} ,
\end{align*}
where the last inequality follows from the H\"older inequality.
In view of \eqref{eq:norms_relations}, we have that
\begin{align*}
    \big\| \|\nabla h\|_F \big\|_{L^2(\sfp^*)}^2
    \leq \alpha D^{1 - 1 / \alpha}\sigma^{-2} \left( D \|h\|_{L^2(\sfp^*)}^2 + \sigma^{2\alpha} |h|_{W^{\alpha, 2}(\sfp^*)}^2 \right)^{1 / \alpha} \|h\|_{L^2(\sfp^*)}^{2 - 2 / \alpha} .
\end{align*}
Thus, the proof is finished.

\myendproof

\subsection{Proof of Lemma \ref{lem:bernstein_cond_helper}}
\label{sec:lem_bernstein_cond_helper_proof}

The proof relies on the expansion of $f$ using Hermite polynomials, enabling control of the divergence norm via the function norm.
We split the proof into two parts, each corresponding to one of the claims.

\noindent
\textbf{Step 1: proof of statement $(i)$.}\quad
Since Hermite polynomials form an orthogonal basis in Sobolev space $W^{\alpha, 2}\left(\R^D, \phi_D\right)$, $f$ can be rewritten in terms of a convergent series
\begin{align*}
    f(z) = \sum\limits_{\bk\in\Z_+^D}a_{\bk}\cH_{\bk}(z), \quad z \in \R^D,
\end{align*}
where $a_\bk = (a_\bk^1, a_\bk^2, \dots, a_\bk^D)$ for every $\bk \in \Z_+^D$.
Hence, we have that
\begin{align}
    \label{eq:f_L2_norm_hermite}
    \|f\|^2_{L^2(\phi_D)}
    = \sum_{\bk \in \Z_+^D}\|a_\bk\|^2\langle \cH_\bk, \cH_\bk\rangle_{\phi_D}
    = \sum_{\bk \in \Z_+^D}\|a_\bk\|^2\bk! 
\end{align}
and, similarly, for any $\bp \in \Z_+^D$ with $|\bp| \geq 1$ and $1 \leq i \leq D$,
\begin{align}
    \label{eq:D_p_f_aux}
    \|\partial^\bp f_i\|_{L^2(\phi_D)}^2
    = \left\|\sum_{\substack{\bk\in \Z_+^D \\ \bk \geq \bp}} a_\bk^i \frac{\bk!}{(\bk - \bp)!}\cH_{\bk - \bp}\right\|_{L^2(\phi_D)}^2
    = \sum_{\substack{\bk \in \Z_+^D \\ \bk \geq \bp}} (a_\bk^i)^2\frac{(\bk!)^2}{(\bk - \bp)!},
\end{align}
where we used the identity that $\partial^{\bp}\cH_\bk = (\bk! / (\bk - \bp)!) \cdot\cH_{\bk - \bp}$ for all $\bk \geq \bp$.
Therefore, the weighted Sobolev seminorm of $f$ (see Definition \ref{def:sobolev_space}) is given by
\begin{align}
    \label{eq:weighted_sob_norm_hermite}
    |f|_{W^{\alpha, 2}(\phi_D)}^2 
    = \sum_{\substack{\bk \in \Z_+^D \\ |\bk| = \alpha}} \sum_{i = 1}^D\|\partial^\bk f_i\|_{L^2(\phi_D)}^2
    = \sum_{\substack{\bp \in \Z_+^D \\ |\bp| = \alpha}} \sum_{\substack{\bk \in \Z_+^D \\ \bk \geq \bp}} \|a_\bk\|^2 \frac{(\bk)!}{(\bk - \bp)!} .
\end{align}
Now Cauchy-Schwartz inequality implies that
\begin{align*}
    \|\rdiv[f]\|^2_{L^2(\phi_D)}
    = \left\|\sum_{i = 1}^D \partial^{\be_i}f_i\right\|_{L_2(\phi_D)}^2
    \leq \left(\sum_{i=1}^D\|\partial^{\be_i}f_i\|_{L^2(\phi_D)}\right)^2
    \leq D \sum_{i = 1}^D\|\partial^{\be_i}f_i\|_{L^2(\phi_D)}^2 ,
\end{align*}
where $\be_i$ denotes the $i$-th unit basis vector in $\R^D$.
Therefore, \eqref{eq:D_p_f_aux} yields
\begin{align*}
    \|\rdiv[f]\|^2_{L^2(\phi_D)}
    &\leq D \sum_{i=1}^D\sum\limits_{\bk\in\Z_+^D}\left(a^i_{\bk+\be_i}\right)^2 (k_i + 1)(\bk+\be_i)! .
\end{align*}
By Hölder's inequality with $p=\alpha$ and $q=\alpha/(\alpha-1)$, we have
\begin{align}
    \label{eq:berns_div_holder}
    \notag
    \|\rdiv[f]\|_{L^2(\phi_D)}^2
    &
    \leq D\left(\sum\limits_{i=1}^D \sum\limits_{\bk\in\Z_+^D}\left(a^i_{\bk+\be_i}\right)^2 (\bk+\be_i)!(k_i + 1)^{\alpha}\right)^{1/\alpha}
    \\&\quad
    \cdot \left(\sum\limits_{i=1}^D \sum\limits_{\bk\in\Z_+^D}\left(a^i_{\bk+\be_i}\right)^2 (\bk+\be_i)!\right)^{1-1/\alpha}.
\end{align}
Evidently, the second term on the right-hand-side is bounded by
\begin{align}
    \label{eq:berns_herm_second_term}
    \sum_{i=1}^D \sum_{\bk\in\Z_+^D}(a^i_{\bk+\be_i})^2 (\bk+\be_i)!
    \leq \sum_{\bk \in \Z_+^D} \|a_\bk\|^2 \bk !
    = \|f\|_{L^2(\phi_D)}^2 ,
\end{align}
as suggested by \eqref{eq:f_L2_norm_hermite}.
We now turn to the evaluation of the first term.
Shifting the multi-index and partitioning the series leads to
\begin{align*}
    \sum_{i=1}^D \sum_{\bk\in\Z_+^D} (a^i_{\bk+\be_i})^2 (\bk + \be_i)!(k_i + 1)^{\alpha}
    &= \sum_{i=1}^D\sum_{\substack{\bk\in\Z_+^D\\1 \leq k_i \leq \alpha}}(a^i_{\bk})^2 \bk!k_i^{\alpha} + \sum\limits_{i=1}^D \sum\limits_{\substack{\bk\in\Z_+^D\\ k_i > \alpha}} (a^i_{\bk})^2 \bk!k_i^{\alpha} \\
    &\leq \alpha^\alpha \sum_{\bk \in \Z_+^D} \|a_\bk\|^2 \bk ! + \alpha^\alpha \sum_{i = 1}^D \sum_{\substack{\bk \in \Z_+^D \\ k_i > \alpha}} (a_\bk^i)^2 \bk! \frac{k_i!}{(k_i - \alpha)!},
\end{align*}
where the last inequality uses the observation that if $k_i \geq \alpha$, then $k_i^\alpha(k_i - \alpha)! \leq \alpha^\alpha k_i!$.
In view of \eqref{eq:f_L2_norm_hermite} and \eqref{eq:weighted_sob_norm_hermite}, we deduce that
\begin{align*}
    \sum_{i=1}^D \sum_{\bk\in\Z_+^D} (a^i_{\bk+\be_i})^2 (\bk + \be_i)!(k_i + 1)^{\alpha}
    &\leq \alpha^\alpha \sum_{\bk \in \Z_+^D} \|a_\bk\|^2 \bk ! + \alpha^\alpha \sum_{\substack{\bp \in \Z_+^D \\ |\bp| = \alpha}} \sum_{\substack{\bk \in \Z_+^D \\ \bk \geq \bp}} \|a_\bk\|^2 \frac{(\bk!)^2}{(\bk - \bp)!} \\
    &\leq \alpha^\alpha\left(\|f\|_{L^2(\phi_D)}^2 + |f|_{W^{\alpha, 2}(\phi_D)}^2\right).
\end{align*}
Combining this with \eqref{eq:berns_div_holder} and \eqref{eq:berns_herm_second_term}, we conclude that
\begin{align*}
    \|\rdiv[f]\|_{L^2(\phi_D)}^2
    \leq \alpha D \left(\|f\|_{L^2(\phi_D)}^2 + |f|_{W^{\alpha, 2}(\phi_D)}^2\right)^{1 / \alpha} \|f\|_{L^2(\phi_D)}^{2(\alpha - 1) / \alpha} .
\end{align*}
The proof of claim $(i)$ is complete.

\noindent
\textbf{Step 2: proof of statement $(ii)$.}\quad
The proof follows almost identically to that of $(i)$.
First, we note that
\begin{align*}
    \big\| \|\nabla f\|_F \big\|_{L^2(\phi_D)}^2
    = \sum_{i = 1}^D \sum_{j = 1}^D \|\partial^{\be_j}f_i\|_{L^2(\phi_D)}^2
    = \sum_{j = 1}^D \sum_{\bk \in \Z_+^D} \|a_{\bk + \be_j}\|^2(k_j + 1)(\bk + \be_j)! .
\end{align*}
Next, applying the Hölder inequality with parameters $p = \alpha$ and $q = \alpha / (\alpha - 1)$, we obtain
\begin{align*}
    \big\| \|\nabla f\|_F \big\|_{L^2(\phi_D)}^2
    \leq \left( \sum_{j = 1}^D\sum_{\bk \in \Z_+^D}\|a_{\bk + \be_j}\|^2(\bk + \be_j)! (k_j + 1)^\alpha \right)^{1 / \alpha} \left( \sum_{j = 1}^D\sum_{\bk \in \Z_+^D}\|a_{\bk + \be_j}\|^2(\bk + \be_j)! \right)^{1 - 1 / \alpha} .
\end{align*}
From \eqref{eq:f_L2_norm_hermite} we find that the second term in the right-hand-side of the above bound is bounded as
\begin{align*}
    \sum_{j = 1}^D\sum_{\bk \in \Z_+^D}\|a_{\bk + \be_j}\|^2(\bk + \be_j)!
    \leq D\sum_{\bk \in \Z_+^D} \|a_\bk\|^2\bk!
    = D\|f\|_{L^2(\phi_D)}^2 .
\end{align*}
For the first term, we apply the same reasoning used in the previous step.
Formally,
\begin{align*}
     \sum_{j = 1}^D\sum_{\bk \in \Z_+^D}\|a_{\bk + \be_j}\|^2(\bk + \be_j)! (k_j + 1)^\alpha
     &\leq \alpha^\alpha\sum_{j = 1}^D \sum_{\bk \in \Z_+^D} \|a_\bk\|^2 \bk! + \alpha^\alpha \sum_{\substack{\bp \in \Z_+^D \\ |\bp| = \alpha}}\sum_{\substack{\bk \in \Z_+^D \\ \bk \geq \bp}}\|a_\bk\|^2\frac{(\bk!)^2}{(\bk - \bp)!} \\
     &= \alpha^\alpha\left( D\|f\|_{L^2(\phi_D)}^2 + |f|_{W^{\alpha, 2}(\phi_D)}^2 \right) ,
\end{align*}
where the last equality uses \eqref{eq:f_L2_norm_hermite} and \eqref{eq:weighted_sob_norm_hermite}.
Combining both bounds, we conclude that
\begin{align*}
    \big\| \|\nabla f\|_F \big\|_{L^2(\phi_D)}^2
    \leq \alpha D^{1 - 1/\alpha} \left( D\|f\|_{L^2(\phi_D)}^2 + |f|_{W^{\alpha, 2}(\phi_D)}^2 \right)^{1 / \alpha} \|f\|_{L^2(\phi_D)}^{2(\alpha - 1) / \alpha} .
\end{align*}
Thus, claim $(ii)$ holds, and the proof is complete.

\myendproof

\section{Proofs of supplemental results from Section \ref{sec:estimation_proof}}

\subsection{Proof of Lemma \ref{lem:bernstein_cond}}
\label{sec:lem_bernstein_cond_proof}

We divide the proof into several steps to improve readability.

\noindent
\textbf{Step 1: decomposing the variance.}\quad
We begin by noticing that
\begin{align}
\label{eq:berns_var_decomp}
    \notag
    \Var[\ell(s, Y) - \ell(s^*, Y)]
    &= \Var\left[\rdiv[s - s^*](Y) + \frac12\|s(Y)\|^2 - \frac12\|s^*(Y)\|^2\right] \\
    &\leq 2 \underbrace{ \E \big( \rdiv[s - s^*](Y) \big)^2 }_{(A)} + \frac12\underbrace{\E\left(\|s(Y)\|^2 - \|s^*(Y)\|^2\right)^2}_{(B)} .
\end{align}

\noindent
\textbf{Step 2: Bounding term $(A)$.}\quad
First, we rewrite term $(A)$:
\begin{align*}
    \E \big( \rdiv[s - s^*](Y) \big)^2
    = \|\rdiv[s - s^*]\|^2_{L^2(\sfp^*)}.
\end{align*}
Applying Lemma \ref{lem:bernstein_cond_div} with $\alpha \geq 2$, we obtain
\begin{align}
    \label{eq:div_s_s_star_aux}
    \|\rdiv[s - s^*]\|^2_{L^2(\sfp^*)} \leq \frac{\alpha D}{\sigma^{2}} \left\{ \|s - s^*\|^2_{L^2(\sfp^*)} + \sigma^{2\alpha}|s - s^*|^2_{W^{\alpha, 2}(\sfp^*)} \right\}^{1 / \alpha}\|s - s^*\|^{2 - 2/\alpha}_{L^2(\sfp^*)} .
\end{align}
Next, we evaluate
\begin{align*}
    \|s - s^*\|^2_{L^2(\sfp^*)} \lesssim \E \left\|-aX + af(Y) \right\|^2 + \E \left\| -\frac{Y}{\sigma^{2}} + \frac{f^*(Y)}{\sigma^{2}} \right\|^2,
\end{align*}
where $X \sim \sfp^*$, $s(y) = -a y + af(y)$, and $s^*(y) = (-y + f^*(y)) / \sigma^{2}$ according to \eqref{eq:s_star_f_star_def}.
Therefore, we have
\begin{align*}
    \|s - s^*\|_{L^2(\sfp^*)}^2
    \lesssim \sigma_{\min}^{-4} \, \E \|Y\|^2 + \sigma_{\min}^{-4} \, \E \|f(Y)\|^2 + \sigma^{-4} \, \E\|f^*(Y)\|^2.
\end{align*}
Taking into account that $|f_l|_{W^{0, \infty}(\R^D)} \leq C_0$ for each $1 \leq l \leq D$, and $\E \|Y\|^2 \lesssim 1 + \sigma^2 D$ according to Assumption \ref{asn:relax_man}, we arrive at
\begin{align}
\label{eq:s_s_star_l2_bound}
    \|s - s^*\|^2_{L^2(\sfp^*)}
    \lesssim \sigma_{\min}^{-4}D(C_0^2 \vee 1) .
\end{align}
Now, using the fact that $\alpha \geq 2$, we bound
\begin{align*}
    |s - s^*|^2_{W^{\alpha, 2}(\sfp^*)}
    = \sum_{\substack{\bk \in \Z_+^D \\ |\bk| = \alpha}} \|a \partial^\bk f - \partial^\bk s^* \|^2_{L^2(\sfp^*)}
    \lesssim \sum_{\substack{\bk \in \Z_+^D \\ |\bk| = \alpha}} \left(\sigma_{\min}^{-4}\|\partial^\bk f\|^2_{L^2(\sfp^*)} + \|\partial^\bk s^*\|^2_{L^2(\sfp^*)} \right) .
\end{align*}
Recall that $|f_l|_{W^{\alpha, 2}(\sfp^*)} \leq C_\alpha \sigma_{\min}^{-2\alpha}$ for each $1 \leq l \leq D$.
Moreover, applying Lemma \ref{lem:score_derivatives}, we deduce that
\begin{align*}
    |s - s^*|^2_{W^{\alpha, 2}(\sfp^*)}
    \lesssim \sum_{\substack{\bk \in \Z_+^D \\ |\bk| = \alpha}} \left( C_\alpha^2D\sigma_{\min}^{-4\alpha - 4} + 4^\alpha \alpha! \sigma^{-4\alpha - 4} \right)
    \leq (D + \alpha)^\alpha (C_\alpha^2 \vee 1)\sigma_{\min}^{-4\alpha - 4}(D + 4^\alpha\alpha!) .
\end{align*}
Substituting this and \eqref{eq:s_s_star_l2_bound} into \eqref{eq:div_s_s_star_aux} yields
\begin{align*}
    &\|\rdiv[s - s^*]\|^2_{L^2(\sfp^*)} \\
    & \lesssim \alpha D\sigma^{-2}\left\{ (\sigma_{\min}^{-4}D(C_0^2 \vee 1))^{1 / \alpha} + \sigma^2(D + \alpha)(C_\alpha^{2 / \alpha} \vee 1)\sigma_{\min}^{-4 - 4 / \alpha} (D + 4^\alpha\alpha!)^{1/\alpha} \right\} \|s - s^*\|_{L^2(\sfp^*)}^{2 - 2 / \alpha} .
\end{align*}
Finally, from Stirling's approximation we find that
\begin{align}
\label{eq:berns_A_bound}
    \|\rdiv[s - s^*]\|^2_{L^2(\sfp^*)}
    \lesssim \alpha^2 D^{1 + 1 / \alpha}(D + \alpha)\sigma^{-2}\sigma_{\min}^{-4 - 4 / \alpha}(C_0^{2 / \alpha} \vee C_\alpha^{2 / \alpha} \vee 1)\|s - s^*\|_{L^2(\sfp^*)}^{2 - 2 / \alpha} .
\end{align}
Lemma \ref{lem:bernstein_cond_div} also implies that the above bound holds for the Frobenius norm of the Jacobian matrix $\nabla(s - s^*)$, therefore, verifying claim $(ii)$ of the current lemma.

\noindent
\textbf{Step 3: Bounding term $(B)$.}\quad
To evaluate term $(B)$, the Cauchy-Schwartz inequality together with the Hölder inequality, applied with $p = \alpha / (\alpha - 1)$ and $q = \alpha$ for some $\alpha \in \N$ with $\alpha \geq 2$, yields
\begin{align*}
    (B)
    &\leq \E \left[ \|s(Y) - s^*(Y)\|^2 \, \|s(Y) - s^*(Y)\|^2 \right] \\
    &\leq \left( \E \|s(Y) - s^*(Y)\|^2 \right)^{(\alpha - 1) / \alpha} \left( \E \big[ \|s(Y) - s^*(Y)\|^2 \, \|s(Y) + s^*(Y)\|^{2\alpha} \big] \right)^{1 / \alpha} .
\end{align*}
The second term is bounded by
\begin{align*}
    \left( \E \big[ \|s(Y) - s^*(Y)\|^2 \|s(Y) + s^*(Y)\|^{2\alpha} \big] \right)^{1 / \alpha}
    &\lesssim \left( \E[(\|s(Y)\|^2 + \|s^*(Y)\|^2)^{1 + \alpha}] \right)^{1 / \alpha} \\
    &\lesssim \left( \E \left[ \left( \frac{\|Y\|^2 + D(C_0^2 \vee 1)}{\sigma_{\min}^{4}} \right)^{1 + \alpha} \right] \right)^{1 / \alpha} .
\end{align*}
Next, by Assumption \ref{asn:relax_man}, we have $\|Y\|^2 \leq 2 + 2\sigma^2\|Z\|^2$ almost surely, where $Z \sim \cN(0, I_D)$.
Therefore, we have that
\begin{align*}
    \left( \E \big[ \|s(Y) - s^*(Y)\|^2\|s(Y) + s^*(Y)\|^{2\alpha} \big] \right)^{1 / \alpha}
    &\lesssim \sigma_{\min}^{-4 - 4/\alpha} \left\{ \E \left( \|Z\|^2 + D(C_0^2 \vee 1) \right)^{1 + \alpha} \right\}^{1 / \alpha} \\
    &\leq \sigma_{\min}^{-4 - 4 / \alpha} \left\{ \left(\E \|Z\|^{2 + 2\alpha} \right)^{1 / (1 + \alpha)} + D(C_0^2 \vee 1) \right\}^{1 + 1 / \alpha} ,
\end{align*}
where the last line follows from Minkowski’s inequality.
Next, since $\big\| \|Z\|^2 \big\|_{\psi_1} \lesssim D$, then from \cite[Proposition 2.7.1]{vershynin2018high} we deduce that
\begin{align*}
     \left( \E \left[\|s(Y) - s^*(Y)\|^2 \, \|s(Y) + s^*(Y)\|^{2\alpha} \right] \right)^{1 / \alpha}
     &
     \lesssim \sigma_{\min}^{-4 - 4 / \alpha} \left\{ \alpha D + D(C_0^2 \vee 1) \right\}^{1 + 1 / \alpha} \\
     &
     \lesssim \alpha \left( \frac{D(C_0^2 \vee 1)}{\sigma_{\min}^{4}} \right)^{1 + 1 / \alpha} .
\end{align*}
Consequently, we conclude that
\begin{align}
    \label{eq:berns_B_bound}
    \E\left[\left(\|s(Y)\|^2 - \|s^*(Y)\|^2\right)^2\right]
    \lesssim \alpha \{\sigma_{\min}^{-4}D(C_0^2 \vee 1)\}^{1 + 1 / \alpha} \{\E[\|s(Y) - s^*(Y)\|^2]\}^{1 - 1 / \alpha} .
\end{align}

\noindent
\textbf{Step 4: combining the bound for terms $(A)$ and $(B)$.}\quad
Putting together \eqref{eq:berns_var_decomp}, \eqref{eq:berns_A_bound}, and \eqref{eq:berns_B_bound}, we obtain that
\begin{align*}
    \Var[\ell(s, Y) - \ell(s^*, Y)]
    \lesssim \frac{\alpha^2 D^{1 + 1 / \alpha}(D + \alpha)}{\sigma^{2}\sigma_{\min}^{4 + 4 / \alpha}} \left(C_0^{2 + 2 / \alpha} \vee C_\alpha^{2 / \alpha} \vee 1 \right) \|s - s^*\|_{L^2(\sfp^*)}^{2 - 2 / \alpha} .
\end{align*}
Note that the hidden constant in the above inequality does not depend on $s$.
The proof is now complete.

\myendproof

\subsection{Proof of Lemma \ref{lem:psi1_norm_bound}}
\label{sec:lem_psi1_norm_bound_proof}
First, fix an arbitrary $x \in \R^D$.
Recall that all $s \in \cS(L, W, S, B)$ are in the form of $s(x) = -a x + af(x)$ with $a \leq \sigma_{\min}^{-2}$ (see Definition \ref{def:score_class}).
Therefore, we have that
\begin{align*}
    \sup_{s \in \cS(L, W, S, B)}|\ell(s, x)|
    &\lesssim \sup_{s \in \cS(L, W, S, B)}\|s(x)\|^2 + \sup_{s \in \cS(L, W, S, B)}|\rdiv[s](x)|
    \\&
    \lesssim \sigma_{\min}^{-4}\|x\|^2 + \sigma_{\min}^{-4}\|f(x)\|^2 + \sup_{s \in \cS(L, W, S, B)}|\rdiv[s](x)| .
\end{align*}
We further find from Definition \ref{def:score_class} that $\rdiv[s](x) = -a D + a \cdot \rdiv[f](x)$ and, thus,
\begin{align*}
    \sup_{s \in \cS(L, W, S, B)}|\ell(s, x)| \lesssim \sigma_{\min}^{-4}\|x\|^2 + \sigma_{\min}^{-4}D(C_0^2 \vee C_1 \vee 1) .
\end{align*}
Similarly, from \eqref{eq:s_star_f_star_def} and Lemma \ref{lem:score_derivatives} we find that
\begin{align*}
    |\ell(s^*, x)|
    \lesssim \|s^*(x)\|^2 + |\rdiv[s^*](x)|
    \lesssim \sigma^{-4}\|x\|^2 + \sigma^{-4}D .
\end{align*}
Therefore, we conclude that
\begin{align*}
    \sup_{s \in \cS(L, W, S, B)}|\ell(s, x) - \ell(s^*, x)| \lesssim \sigma_{\min}^{-4}\|x\|^2 + \sigma_{\min}^{-4}D(C_0^2 \vee C_1 \vee 1) .
\end{align*}
This and Assumption \ref{asn:relax_man} yield
\begin{align*}
    \left\|\sup_{s \in \cS(L, W, S, B)}|\ell(s, Y) - \ell(s^*, Y)|\right\|_{\psi_1}
    &\lesssim \sigma_{\min}^{-4}\left\|\|Y\|^2\right\|_{\psi_1} + \sigma_{\min}^{-4}D(C_0^2 \vee C_1 \vee 1) \\
    &\lesssim \sigma_{\min}^{-4}(1 + \sigma^2\left\| \|Z\|^2 \right\|_{\psi_1}) + \sigma_{\min}^{-4}D(C_0^2 \vee C_1 \vee 1) ,
\end{align*}
where $Z \sim \cN(0, I_D)$.
Since $\|Z\|^2 \sim \chi^2(D)$, we arrive at
\begin{align*}
    \left\|\sup_{s \in \cS(L, W, S, B)}|\ell(s, Y) - \ell(s^*, Y)|\right\|_{\psi_1}
    \lesssim \frac{D(C_0^2 \vee C_1 \vee 1)}{\sigma_{\min}^{4}}.
\end{align*}
The proof is finished.

\myendproof

\subsection{Proof of Lemma \ref{lem:loss_cov_num_bound_true}}\label{sec:cov_num_bound}

The proof is quite technical, so we divide it into several steps.

\noindent
\textbf{Step 1: evaluating the proximity of the loss functions.}\quad
Consider $s^{(1)}, s^{(2)} \in \cS(L, W, S, B)$ (see Definition \ref{def:score_class}) of the form
\begin{align*}
    s^{(j)}(x) = -a_j x + a_j f^{(j)}(x), \quad x \in \R^D, \; j \in \{1, 2\} .
\end{align*}
Now, for any arbitrary $x \in [-R, R]^D$, it holds that
\begin{align}
    \label{eq:l_1_l_2_x_diff}
    |\ell(s^{(1)}, x) - \ell(s^{(2)}, x)| \leq \frac{1}{2}\left\|s^{(1)}(x) - s^{(2)}(x)\right\| \cdot \left\|s^{(1)}(x) + s^{(2)}(x)\right\| + \left|\rdiv[s^{(1)} - s^{(2)}](x)\right| .
\end{align}
From Definition \ref{def:score_class} we find that
\begin{align*}
    \left\|s^{(1)}(x) + s^{(2)}(x)\right\|
    \leq \left\|s^{(1)}(x)\right\| + \left\|s^{(2)}(x)\right\|
    \leq 2\|x\|\sigma_{\min}^{-2} + 2\sqrt{D}C_0\sigma_{\min}^{-2}
    \leq 2\sqrt{D}(R + C_0)\sigma_{\min}^{-2} .
\end{align*}
In addition, we have that
\begin{align*}
    \left\|s^{(1)}(x) - s^{(2)}(x)\right\|
    &\leq |a_1 - a_2| \cdot \|x\| + |a_1 - a_2| \cdot \left\|f^{(1)}(x)\right\| + a_2 \cdot \left\|f^{(1)}(x) - f^{(2)}(x)\right\| \\
    &\leq |a_1 - a_2| \sqrt{D} (R + C_0) + \sigma_{\min}^{-2} \left\|f^{(1)}(x) - f^{(2)}(x)\right\| 
\end{align*}
and, similarly,
\begin{align*}
    \left|\rdiv[s^{(1)} - s^{(2)}](x)\right|
    &\leq |a_1 - a_2|D + |a_1 - a_2| \cdot \left|\rdiv[f^{(1)}](x)\right| + a_2\left|\rdiv[f^{(1)} - f^{(2)}](x)\right| \\
    &\leq |a_1 - a_2|D (1 + C_1\sigma_{\min}^{-2}) + \sigma_{\min}^{-2}\left|\rdiv[f^{(1)} - f^{(2)}](x)\right| .
\end{align*}
Substituting the derived bounds into \eqref{eq:l_1_l_2_x_diff}, we conclude that, for any $x \in [-R, R]^D$, the following holds:
\begin{align}
    \label{eq:l_s1_l_s2_prox}
    \notag
    &\left|\ell(s^{(1)}, x) - \ell(s^{(2)}, x)\right|
    \\&
    \lesssim D\sigma_{\min}^{-4}(R^2 \vee C_0^2 \vee C_1 \vee 1) \left(|a_1 - a_2| \vee \left\|f^{(1)}(x) - f^{(2)}(x)\right\| \vee \left|\rdiv[f^{(1)} - f^{(2)}](x)\right| \right).
\end{align}

\noindent
\textbf{Step 2: covering number evaluation.}\quad
We now formulate a result that quantifies the proximity of $f^{(1)}$ and $f^{(2)}$ together with its derivatives provided that their weights are close.
\begin{Lem}
\label{lem:gelu_func_div_prox}
    Let $\eps > 0$ and $R > 0$ be arbitrary, and let $f^{(1)}, f^{(2)} \in \NN(L, W, S, B)$ be of the form
    \begin{align*}
        f^{(j)}(x) = -b_L^{(j)} + A_L^{(j)} \circ \gelu_{b_{L - 1}^{(j)}} \circ A_{L - 1}^{(j)} \circ \gelu_{b_{L - 2}^{(j)}} \circ \dots \circ A_2^{(j)} \circ \gelu_{b_1^{(j)}} \circ A_1^{(j)} \circ x,
    \end{align*}
    where $x, f^{(j)} \in \R^D$, $j \in \{1, 2\}$, $\max_{1 \leq l \leq L} (\|A_l^{(1)} - A_l^{(2)}\|_\infty \vee \|b_l^{(1)} - b_l^{(2)}\|) \leq \eps$.
    Then, it holds that
    \begin{align*}
        (i) &\quad \|f_1 - f_2\|_{L^\infty([-R, R]^D)} \leq \sqrt{D} \cdot 4^L(B \vee 1)^L(\|W\|_\infty + 1)^L (R \vee 1) \eps , \\
        (ii) &\quad \|\rdiv[f_1 - f_2]\|_{L^\infty([-R, R]^D)} \leq D \cdot 16^L  (\|W\|_\infty + 1)^{2L - 1} (B \vee 1)^{2L - 1} (R \vee 1)\eps .
    \end{align*}
    Furthermore, for all $0 < \tau \leq \cD(\NN, \|\cdot\|_{L^\infty([-R, R]^D)})$, the following inequality holds:
    \begin{align*}
        (iii) &\quad \log \cN\left(\tau, \NN, \|\cdot\|_{L^\infty([-R, R]^D)}\right)
        \lesssim SL \log\left(\tau^{-1}L(B \vee 1)(\|W\|_\infty + 1)(R \vee 1)\right) .
    \end{align*}
\end{Lem}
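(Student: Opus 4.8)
The plan is to carry out a layer-by-layer perturbation analysis, tracking how the outputs and their Jacobians depend on the weight matrices and bias vectors, and then to convert (i) into a covering bound by discretization. Write $x^{(j)}_0 = x$ and $x^{(j)}_l = \gelu_{b_l^{(j)}}\big(A_l^{(j)} x^{(j)}_{l-1}\big)$ for the post-activations of network $j \in \{1,2\}$, with the final affine map $x \mapsto -b_L^{(j)} + A_L^{(j)} x$ applied last. Two elementary analytic facts will be used repeatedly: (a) $\gelu$ is globally Lipschitz with $\|\gelu'\|_{L^\infty(\R)} \le 2$ and $\gelu'$ is itself Lipschitz with $\|\gelu''\|_{L^\infty(\R)} \le 2$, since $\gelu''(x) = (2 - x^2)\phi(x)$ with $\phi$ the standard Gaussian density; (b) each affine map with entries bounded by $B$ and at most $\|W\|_\infty$ nonzero inputs per row expands the Euclidean norm by a factor at most $B(\|W\|_\infty + 1)$. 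From (b) and $|\gelu(t)| \le |t|$, an induction gives $\|x^{(j)}_l\|_\infty \le \big(2B(\|W\|_\infty+1)\big)^l (R \vee 1)$ on $[-R, R]^D$.

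For statement (i) I would propagate the weight perturbation $\max_{l}(\|A_l^{(1)} - A_l^{(2)}\|_\infty \vee \|b_l^{(1)} - b_l^{(2)}\|) \le \eps$ forward layer by layer: the discrepancy freshly introduced at layer $l$ is of order $\eps(\|W\|_\infty + 1)\|x_{l-1}\|_\infty + \eps$, while the discrepancy already present at the input of layer $l$ is amplified by the per-layer Lipschitz constant $2B(\|W\|_\infty + 1)$. Summing the resulting geometric series over $l = 1, \dots, L$ and converting the sup norm of the $D$-dimensional output to Euclidean norm (a factor $\sqrt D$) yields the bound $\sqrt D \, 4^L (B \vee 1)^L (\|W\|_\infty + 1)^L (R \vee 1)\eps$, with room to spare in the constants.

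For statement (ii) I would differentiate via the chain rule, $\nabla f^{(j)}(x) = A_L^{(j)} \diag\big(\gelu'(\cdot)\big) A_{L-1}^{(j)} \cdots \diag\big(\gelu'(\cdot)\big) A_1^{(j)}$, a product of $2L - 1$ matrices each of operator norm at most $2B(\|W\|_\infty + 1)$. To bound $\|\nabla f^{(1)} - \nabla f^{(2)}\|$ I telescope over these $2L - 1$ factors; a single factor difference is either a weight difference (of size $\le \eps$ in the relevant norm, picking up one factor $\|W\|_\infty + 1$ when passing to operator norm) or a difference $\diag\big(\gelu'(a^{(1)})\big) - \diag\big(\gelu'(a^{(2)})\big)$, which by the Lipschitzness of $\gelu'$ is controlled by $\|a^{(1)} - a^{(2)}\|_\infty$ — precisely the pre-activation discrepancy already estimated in the proof of (i). Each of the $O(L)$ telescoped terms is thus a product of at most $2L - 1$ norm-$\le 2B(\|W\|_\infty+1)$ factors times a perturbation of size $O\big(4^L(B\vee1)^L(\|W\|_\infty+1)^L(R\vee1)\eps\big)$; collecting exponents gives the scaling $16^L (\|W\|_\infty+1)^{2L-1}(B\vee1)^{2L-1}$. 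Since $\rdiv[f^{(1)} - f^{(2)}] = \tr\big(\nabla f^{(1)} - \nabla f^{(2)}\big)$ and $|\tr M| \le D\|M\|$ for a $D \times D$ matrix, this produces the extra factor $D$ in (ii). I expect this to be the main obstacle: unlike (i), it requires simultaneous control of the network Jacobians, hence second-order information about $\gelu$, and careful bookkeeping so that the geometric amplification over $2L-1$ matrix factors does not exceed the claimed constants.

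For statement (iii) I would use a standard discretization. Given $\tau$, choose $\eps$ so that the right-hand side of (i) equals $\tau$, i.e. $\eps \asymp \tau \big(\sqrt D \, 4^L (B \vee 1)^L (\|W\|_\infty+1)^L (R \vee 1)\big)^{-1}$, and round every weight and bias entry to the nearest multiple of $\eps$ inside $[-B, B]$; by (i) the resulting finite family is a $\tau$-net of $\NN$ in $\|\cdot\|_{L^\infty([-R,R]^D)}$, valid for every $\tau \le \cD(\NN, \|\cdot\|_{L^\infty([-R,R]^D)})$. Counting: there are at most $L(\|W\|_\infty + 1)^2$ weight slots, of which at most $S$ are nonzero, and each nonzero slot takes one of at most $2B/\eps + 1$ values, so $\log \cN\big(\tau, \NN, \|\cdot\|_{L^\infty([-R,R]^D)}\big) \le \log\binom{L(\|W\|_\infty+1)^2}{S} + S\log(2B/\eps + 1)$. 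Using $\log\binom{m}{S} \le S\log(em/S)$ and substituting the value of $\eps$, the term $\log(B/\eps)$ contributes $O\big(L\log\big(\tau^{-1}(B\vee1)(\|W\|_\infty+1)(R\vee1)\big) + \log D\big)$, which is absorbed since $D \le (\|W\|_\infty+1)^2$; multiplying by $S$ gives the stated bound $SL\log\big(\tau^{-1} L (B\vee1)(\|W\|_\infty+1)(R\vee1)\big)$.
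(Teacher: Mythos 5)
Your plan follows essentially the same route as the paper: a forward layer recursion for (i) using $|\gelu(y)|\le|y|$ and the $2$-Lipschitzness of $\gelu$, a telescoping of the Jacobian product for (ii) using the $2$-Lipschitzness of $\gelu'$ together with the pre-activation discrepancy from (i) and the trace bound $|\rdiv[\cdot]|\le D\|\nabla\cdot\|_\infty$, and the standard sparsity-pattern-plus-grid discretization for (iii); the paper phrases (ii) as a recursion over layers for $\nabla\cF_l$ rather than a telescoping over the $2L-1$ matrix factors, but the two are the same argument. One bookkeeping point in (ii) does not work as you literally wrote it: if you bound \emph{all} $2L-1$ (or $2L-2$) surrounding factors by $2B(\|W\|_\infty+1)$ \emph{and} take the worst-case perturbation $4^L(B\vee1)^L(\|W\|_\infty+1)^L(R\vee1)\eps$ for the differing factor, collecting exponents gives order $(B\vee1)^{3L}(\|W\|_\infty+1)^{3L}$, not the claimed $(B\vee1)^{2L-1}(\|W\|_\infty+1)^{2L-1}$. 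The fix is exactly what the paper's recursion does implicitly: when the differing factor sits at layer $k$, its perturbation is controlled by the layer-$k$ discrepancy $2\cdot4^{k}(B\vee1)^{k}(\|W\|_\infty+1)^{k}(R\vee1)\eps$ (which already accounts for the first $k$ layers), and only the remaining $\sim L-k$ layers' worth of $A$- and $\diag(\gelu')$-factors are bounded crudely; summing over $k$ then yields $16^L(\|W\|_\infty+1)^{2L-1}(B\vee1)^{2L-1}(R\vee1)\eps$ as claimed. With that adjustment (which is harmless for the downstream covering-number use in any case, since only logarithms of these constants enter), your argument matches the paper's.
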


We postpone the proof of Lemma \ref{lem:gelu_func_div_prox} to Appendix \ref{sec:lem_gelu_func_div_prox_proof}.
Define the class of neural network weights as
\begin{align*}
    &\cW(L, W, S, B) \\
    &= \left\{ \{(A_l, b_l)\}_{l = 1}^L : A_l \in \R^{W_l \times W_{l - 1}}, b_l \in \R^{W_l}, \max_{1 \leq l \leq L}(\|A_l\|_\infty \vee \|b\|_\infty) \leq B, \sum_{l = 1}^L(\|A_l\|_0 + \|b_l\|_0) \leq S \right\} .
\end{align*}
We also define $\cW_\eps$ as a minimal $\eps$-net of $\cW(L, W, S, B)$ with respect to $\|\cdot\|_\infty$-norm given by
\begin{align*}
    \|\{A_l, b_l\}_{l = 1}^L\|_\infty = \max_{1 \leq l \leq L}(\|A_l\|_\infty \vee \|b_l\|_\infty),
    \quad \{A_l, b_l\}_{l = 1}^L \in \cW(L, W, S, B) .
\end{align*}
Here, $0 < \eps \leq 2B$ will be determined later in the proof.
Let $\cA_\eps$ be a minimal $\eps$-net of $[1, \sigma_{\min}^{-2}]$ with respect to $\|\cdot\|_\infty$-norm.
We also define
\begin{align}
\label{eq:s_eps_net_def}
    \cS_\eps = \left\{s(x) = -ax + af(x) : a \in \cA_\eps, \, \text{$f$ of the form \eqref{eq:feed_forward_nn_def} with weights $\{(A_l, b_l)\}_{l = 1}^L \in \cW_\eps$} \right\} . 
\end{align}
By lemma \ref{lem:gelu_func_div_prox} and \eqref{eq:l_s1_l_s2_prox}, we have that for any $s \in \cS(L, W, S, B)$ there exists $s_\eps \in \cS_\eps$ such that
\begin{align*}
    \|\ell(s, \cdot) - \ell(s_\eps, \cdot)\|_{L^{\infty}([-R, R]^D)}
    \lesssim D^2 \sigma_{\min}^{-4}(R^2 \vee C_0^2 \vee C_1 \vee 1) \cdot 16^L(\|W_\infty\| + 1)^{2L}(B \vee 1)^{2L} (R \vee 1) \eps .
\end{align*}
Hence, choosing
\begin{align*}
    \log(1 / \eps)
    \asymp \log(1 / \tau) + L \log(D\sigma_{\min}^{-2}(R \vee C_0 \vee C_1 \vee 1)(B \vee 1)(\|W\|_\infty + 1)) 
\end{align*}
ensures that $\{\ell(s, \cdot) : s \in \cS_\eps\}$ is a $\tau$-net of $\cL$ with respect to $\|\cdot\|_{L^\infty([-R, R]^D)}$-norm.
In addition, we obtain that
\begin{align}
\label{eq:ism_log_diam_bound}
    \log \cD(\cL, \|\cdot\|_{L^{\infty}([-R, R]^D)})
    \lesssim L \log(D\sigma_{\min}^{-2}(R \vee C_0 \vee C_1 \vee 1)(B \vee 1)(\|W\|_\infty + 1)) .
\end{align}
Now it remains to evaluate $\log |\cS_\eps|$.
From \eqref{eq:s_eps_net_def} we find that
\begin{align}
    \label{eq:s_eps_net_prod}
    \log |\cS_\eps| \leq \log |\cA_\eps| + \log|\cW_\eps|. 
\end{align}
As for $|\cA_\eps|$, it is evident that
\begin{align}
    \label{eq:A_eps_net_bound}
    \log |\cA_\eps|
    \leq \log(\sigma_{\min}^{-2} / \eps)
    \lesssim \log(1 / \tau) + L \log(D\sigma_{\min}^{-2}(R \vee C_0 \vee C_1 \vee 1)(B \vee 1)(\|W\|_\infty + 1)) .
\end{align}
As for $|\cW_\eps|$, we first select the location of $S$ non-zero parameters given at most $L(\|W\|_\infty^2 + \|W\|_\infty)$ and, second, build an $\eps$-net of a cube $[-B, B]^S$.
Therefore, it holds that
\begin{align*}
    \log|\cW_\eps|
    &\leq \log\left( \binom{L(\|W\|_\infty^2 + \|W\|_\infty)}{S} \right) + S\log((B \vee 1) / \eps) \\
    &\lesssim SL \log \left(\tau^{-1} D \sigma_{\min}^{-2}(R \vee C_0 \vee C_1 \vee 1) L (B \vee 1)(\|W\|_\infty + 1) \right).
\end{align*}
Combining this, \eqref{eq:ism_log_diam_bound}, \eqref{eq:s_eps_net_prod},  and \eqref{eq:A_eps_net_bound} finishes the proof.

\myendproof

\subsection{Proof of Lemma \ref{lem:exp_max_log_data}}
\label{sec:lem_exp_max_log_data_proof}
    From \cite[Exercise 2.5.10]{vershynin2018high} we find that
    \begin{align}
        \label{eq:exp_max_log_aux}
        \E\max_{1 \leq i \leq n}\log^3(\|Y_i\| \vee 1)
        \lesssim \big\|\log^3(\|Y_1\| \vee 1)\big\|_{\psi_2} \sqrt{\log(2n)} .
    \end{align}
    Now \cite[Lemma 2.7.6]{vershynin2018high} suggests that
    \begin{align*}
        \big\| \|Y_1\| \vee 1 \big\|_{\psi_2}^2 = \big\|\|Y_1\|^2 \vee 1\big\|_{\psi_1}
        \lesssim 1 + \sigma^2\big\| \|Z\|^2 \big\|_{\psi_1},
    \end{align*}
    where $Z \sim \cN(0, I_D)$.
    Since $\|Z\|^2 \sim \chi^2(D)$, then it follows that
    \begin{align*}
       \big \| \|Y_1\| \vee 1 \big\|^2_{\psi_2} \lesssim 1 + \sigma^2 D .
    \end{align*}
    Therefore, using the definition of $\psi_1$-norm (see \eqref{eq:psi_1_norm_def}), we obtain that for any $\delta \in (0, 1)$, with probability at least $1 - \delta$, we have  $\|Y_1\|^2 \vee 1 \lesssim (1 + \sigma^2 D)\log(1 / \delta)$.
    Hence, on the same event, it holds that
    \begin{align*}
        \log^6(\|Y_1\|\vee 1) \lesssim \log^6(1 + \sigma^2 D) + \log^6\log(1 / \delta)
        \lesssim \log^6(1 + \sigma^2 D)(1 + \log(1 / \delta)) ,
    \end{align*}
    which implies that
    \begin{align*}
        \left\|\log^3(\|Y_1\| \vee 1)\right\|_{\psi_2}
        = \|\log^6(\|Y_1\| \vee 1)\|_{\psi_1}^{1/2}
        \lesssim \log^3(1 + \sigma^2 D).
    \end{align*}
    Therefore, substituting the derived bound into \eqref{eq:exp_max_log_aux} finishes the proof.

\myendproof

\subsection{Proof of Lemma \ref{lem:gelu_func_div_prox}}
\label{sec:lem_gelu_func_div_prox_proof}
To enhance clarity, we divide the proof into several steps.

\noindent
\textbf{Step 1: proving statement $(i)$.}\quad
First, for any $f \in \NN(L, W, S, B)$ of the form given in \eqref{eq:feed_forward_nn_def} and $l \in \N$ with $0 \leq l \leq L$, define
\begin{align*}
    \cF_l[f](x) = -b_l + A_l \circ \gelu_{b_{l - 1}} \circ A_{l - 1} \circ \gelu_{b_{l - 2}} \circ \dots \circ A_2 \circ \gelu_{b_1} \circ A_1 \circ x, \quad 1 \leq l \leq L,
\end{align*}
with $\cF_0[f](x) = x$.
Next, fix an arbitrary $x \in [-R, R]^D$, $1 \leq l \leq L$ and note that
$$\cF_l[f](x) = -b_l + A_l \circ \gelu \circ \cF_{l - 1}(f)(x).$$
Therefore, we obtain
\begin{align*}
    \left\|\cF_l[f](x)\right\|_\infty
    &\leq B + \|W\|_\infty B \cdot \left\|\gelu \circ \cF_{l - 1}(f)(x)\right\|_\infty .
\end{align*}
Next note from \eqref{eq:gelu_def} that $|\gelu(y)| \leq |y|$ for all $y \in \R$ and, thus,
\begin{align*}
    \|\cF_l[f](x)\|_\infty \leq B(\|W\|_\infty + 1)(1 \vee \| \cF_{l - 1}(f)(x) \|_\infty) .
\end{align*}
By unrolling the recursion and taking into account that $\|\cF_0(f)(x)\|_\infty \leq R$, we deduce that
\begin{align}
    \label{eq:gelu_bound_compact}
    \sup_{x \in [-R, R]^D}\|\cF_l[f](x)\|_\infty \leq R(B \vee 1)^l (\|W\|_\infty + 1)^l,
    \quad \text{for all $0 \leq l \leq L$} .
\end{align}
Now we are ready to elaborate on proximity of $f^{(1)}$ and $f^{(2)}$.
Formally, for an arbitrary $x \in [-R, R]^D$ and $1 \leq l \leq L$, we have that
\begin{align*}
    &\left\|\cF_l[f^{(1)}](x) - \cF_l[f^{(2)}](x)\right\|_\infty
    \\&
    \leq \eps + \|W\|_\infty B \left\|\cF_{l - 1}[f^{(1)}](x)\right\|_\infty \eps + \|W\|_\infty B \left\|\gelu \circ \cF_{l - 1}[f^{(1)}](x) - \gelu \circ \cF_{l - 1}[f^{(2)}](x)\right\|_\infty .
\end{align*}
Next, we establish that $\gelu$ and its derivative are $2$-Lipschitz continuous.
\begin{Lem}
\label{lem:gelu_and_der_2_lip}    
    For any $x, y \in \R$, it holds that
    \begin{align*}
        |\gelu(x) - \gelu(y)| \vee |\nabla \gelu(x) - \nabla\gelu(y)| \leq 2|x - y| .
    \end{align*}
    
\end{Lem}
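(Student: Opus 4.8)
The plan is to reduce both Lipschitz bounds to uniform bounds on the first two derivatives of $\gelu$ via the mean value theorem: since $\gelu$ is smooth on $\R$, if $\sup_{x \in \R}|\gelu'(x)| \le 2$ then $|\gelu(x) - \gelu(y)| \le 2|x - y|$, and if $\sup_{x \in \R}|\gelu''(x)| \le 2$ then $|\gelu'(x) - \gelu'(y)| \le 2|x - y|$; as $\nabla\gelu = \gelu'$ in one variable, this is precisely the claim. So it suffices to bound the two suprema.

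First I would compute the derivatives explicitly. Writing $\phi_1(x) = e^{-x^2/2}/\sqrt{2\pi}$ for the standard Gaussian density, so that $\Phi'(x) = \phi_1(x)$ and $\phi_1'(x) = -x\phi_1(x)$, from $\gelu(x) = x\Phi(x)$ we obtain
\[
    \gelu'(x) = \Phi(x) + x\phi_1(x), \qquad \gelu''(x) = 2\phi_1(x) + x\phi_1'(x) = (2 - x^2)\phi_1(x).
\]
For the first derivative I would use $0 \le \Phi(x) \le 1$ together with the elementary observation that $t \mapsto |t|e^{-t^2/2}$ is maximized at $|t| = 1$, so $|x\phi_1(x)| \le 1/\sqrt{2\pi e}$; hence $|\gelu'(x)| \le 1 + 1/\sqrt{2\pi e} < 2$ for all $x$. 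For the second derivative I would split on $|x|$: when $x^2 \le 2$ we have $|(2 - x^2)\phi_1(x)| \le 2\,\phi_1(0) = \sqrt{2/\pi} < 2$; when $x^2 > 2$, setting $u = x^2$ and studying $g(u) = (u - 2)e^{-u/2}$ on $[2, \infty)$, for which $g'(u) = \tfrac12(4 - u)e^{-u/2}$ vanishes only at $u = 4$ (a maximum), gives $|(2 - x^2)\phi_1(x)| = g(x^2)/\sqrt{2\pi} \le 2e^{-2}/\sqrt{2\pi} < 2$. Combining the two cases, $\sup_{x \in \R}|\gelu''(x)| < 2$, and the lemma follows from the mean value theorem argument above.

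No real obstacle is expected: the computation is elementary one-variable calculus, and the only mildly delicate point is the optimization of $(u - 2)e^{-u/2}$ over $u \ge 2$ needed for the large-$|x|$ regime of $\gelu''$. In fact both suprema are strictly below $2$ with considerable room to spare, so there is no boundary case requiring care.
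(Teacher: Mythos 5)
Your proposal is correct and follows essentially the same route as the paper: reduce to the mean value theorem and show $\sup|\gelu'| \le 2$ and $\sup|\gelu''| \le 2$ by explicit calculus. The only differences are cosmetic — you bound $\gelu'$ directly via $|\Phi| \le 1$ and $|x|e^{-x^2/2} \le e^{-1/2}$ (arguably a bit cleaner than the paper's evaluation of $\gelu'$ at the critical points $\pm\sqrt{2}$), and you bound $\gelu''$ by a case split instead of the paper's $t e^{-t} \le e^{-1}$ trick; both computations check out.
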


The proof of Lemma \ref{lem:gelu_and_der_2_lip} is moved to Appendix \ref{sec:lem_gelu_and_der_2_lip_proof}.
Using \eqref{eq:gelu_bound_compact} and Lemma \ref{lem:gelu_and_der_2_lip}, we arrive at
\begin{align*}
    &\left\|\cF_l[f^{(1)}](x) - \cF_l[f^{(2)}](x)\right\|_\infty
    \\&
    \leq 2R(B \vee 1)^l(\|W\|_\infty + 1)^l \eps + 2\|W\|_\infty B \left\|\cF_{l - 1}[f^{(1)}](x) - \cF_{l - 1}[f^{(2)}](x)\right\|_\infty .
\end{align*}
Thus, by unrolling the recursion and using the fact that $\left\|\cF_0(f^{(1)})(x) - \cF_0(f^{(2)})(x)\right\|_\infty = 0$, we conclude that
\begin{align}
\label{eq:nn_l_lip_bound}
    \sup_{x \in [-R, R]^D} \left\|\cF_l[f^{(1)}](x) - \cF_l[f^{(2)}](x)\right\|_\infty
    \leq 4^l(B \vee 1)^l(\|W\|_\infty + 1)^l (R \vee 1) \eps,
    \quad \text{for all $0 \leq l \leq L$}.
\end{align}
Finally, noticing that $\|\cdot\| \leq \sqrt{D}\|\cdot\|_\infty$ and setting $l = L$ establishes claim $(i)$.

\noindent
\textbf{Step 2: proving statement $(ii)$.}\quad
As in the previous step, for any $1 \leq l \leq L$ and $f \in \NN(L, W, S, B)$ of the form given in \eqref{eq:feed_forward_nn_def}, we begin by evaluating
\begin{align*}
    \|\nabla \cF_l[f](x)\|_\infty
    = \|A_l \nabla\gelu(\cF_{l - 1}(f)(x))\nabla \cF_{l - 1}(f)(x) \|_\infty, \quad x \in \R^D .
\end{align*}
Since $\nabla\gelu(\cF_{l - 1}(f)(x))$ is a diagonal matrix with values from $[-2, 2]$, we deduce that
\begin{align*}
    \|\nabla \cF_l[f](x)\|_\infty
    \leq 2\|W\|_\infty B \|\nabla \cF_{l - 1}(f)(x)\|_\infty
\end{align*}
and, therefore, unrolling the recursion and noticing that $\|\nabla \cF_0(f)(x)\|_\infty = 1$ for all $x \in \R^D$ leads to
\begin{align}
\label{eq:jac_nn_l_bound}
    \sup_{x \in \R^D}\|\nabla \cF_l[f](x)\|_\infty \leq 2^l\|W\|_\infty^l B^l,
    \quad \text{for all $0 \leq l \leq L$}.
\end{align}
Next, for any $x \in [-R, R]^D$, we bound
\begin{align*}
    &\left\|\nabla \cF_l[f^{(1)}](x) - \nabla\cF_l[f^{(2)}](x)\right\|_\infty
    \\&
    = \left\|A_l^{(1)} \nabla \gelu(\cF_{l - 1}[f^{(1)}](x)) \nabla \cF_{l - 1}[f^{(1)}](x) - A_l^{(2)} \nabla \gelu(\cF_{l - 1}[f^{(2)}](x)) \nabla \cF_{l - 1}[f^{(2)}](x)\right\|_\infty .
\end{align*}
The triangle inequality implies that
\begin{align*}
    &\left\|\nabla \cF_l[f^{(1)}](x) - \nabla\cF_l[f^{(2)}](x)\right\|_\infty
    \\&
    \leq \left\|(A_l^{(1)} - A_l^{(2)}) \nabla \gelu(\cF_{l - 1}[f^{(1)}](x))\nabla \cF_{l - 1}[f^{(1)}](x)\right\|_\infty \\
    &\qquad + \left\| A_l^{(2)} ( \nabla\gelu(\cF_{l - 1}[f^{(1)}](x)) - \nabla\gelu( \cF_{l - 1}[f^{(2)}](x) ) ) \nabla\cF_{l - 1}[f^{(1)}](x) \right\|_\infty \\
    &\qquad + \left\| A_l^{(2)} \nabla\gelu(\cF_{l - 1}[f^{(2)}](x)) (\nabla\cF_{l - 1}[f^{(1)}](x) - \nabla\cF_{l - 1}[f^{(2)}](x) ) \right\|_\infty .
\end{align*}
Using the fact that, for each $j \in \{1, 2\}$, $\nabla\gelu(\cF_{l - 1}(f^{(j)})(x))$ is a diagonal matrix with entries from $[-2, 2]$, and the derivative of $\gelu$ is $2$-Lipschitz continuous due to Lemma \ref{lem:gelu_and_der_2_lip}, we deduce that
\begin{align*}
    \left\|\nabla \cF[f^{(1)}](x) - \nabla\cF[f^{(2)}](x)\right\|_\infty
    &\leq 2\|W\|_\infty B \left\|\nabla\cF_{l - 1}[f^{(1)}](x)\right\|_\infty \eps \\
    &\quad + 2\|W\|_\infty B \left\|\nabla \cF_{l - 1}[f^{(1)}](x)\right\|_\infty \cdot \left\|\cF_{l - 1}[f^{(1)}](x) - \cF_{l - 1}[f^{(2)}](x)\right\|_\infty \\
    &\quad + 2 \|W\|_\infty B \left\| \nabla\cF_{l - 1}[f^{(1)}](x) - \nabla\cF_{l - 1}[f^{(2)}](x) \right\|_\infty .
\end{align*}
In view of \eqref{eq:nn_l_lip_bound} and \eqref{eq:jac_nn_l_bound}, it follows that
\begin{align*}
    \left\|\nabla \cF_l[f^{(1)}](x) - \nabla\cF_l[f^{(2)}](x)\right\|_\infty
    &\leq (2\|W\|_\infty B)^l \eps
    + (2\|W\|_\infty B)^l \cdot 4^{l - 1}(B \vee 1)^{l - 1}(R \vee 1)\eps \\
    &\quad + 2B \|W\|_\infty \left\| \nabla\cF_{l - 1}[f^{(1)}](x) - \nabla \cF_{l - 1}[f^{(2)}](x) \right\|_\infty .
\end{align*}
By unrolling the recursion and noticing that $\left\|\nabla \cF_0(f^{(1)})(x) - \nabla\cF_0(f^{(2)})(x)\right\|_\infty = 0$, we conclude that, for all $0 \leq l \leq L$, it holds that
\begin{align*}
    \sup_{x \in [-R, R]^D} \left\|\nabla \cF_l[f^{(1)}](x) - \nabla\cF_l[f^{(2)}](x)\right\|_\infty
    \leq 16^l (\|W\|_\infty + 1)^{2l - 1} (B \vee 1)^{2l - 1}(R \vee 1)\eps .
\end{align*}
Therefore, statement $(ii)$ follows from the observation that
\begin{align*}
    \left|\rdiv\left[\cF_l[f^{(1)}] - \cF_l[f^{(2)}]\right](x) \right|
    &= \left|\tr(\nabla\cF_l[f^{(1)}](x) - \nabla\cF_l[f^{(2)}](x))\right|
    \\&
    \leq D \left\|\nabla\cF_l[f^{(1)}](x) - \nabla\cF_l[f^{(2)}](x)\right\|_\infty .
\end{align*}

\noindent
\textbf{Step 3: proving statement $(iii)$.}\quad
Note that there are at most
\begin{align*}
    \binom{L(\|W\|_\infty^2 + \|W\|_\infty)}{S} \leq 
    L^S (\|W_\infty\|^2 + \|W\|_\infty)^S
\end{align*}
ways to locate non-zero weights.
Therefore, statement $(i)$ of the current lemma together with the observation that $0 < \eps \leq 2B$ implies that
\begin{align*}
    &\log \cN\left(\tau, \NN(L, W, S, B), \|\cdot\|_{L^\infty([-R, R]^D)}\right)
    \\&
    \lesssim S \log(L (\|W\|_\infty + 1)) + SL \log(\tau^{-1}(\|W\|_\infty + 1)(B \vee 1)(R \vee 1))
    \\&
    \lesssim SL \log(\tau^{-1}L(B \vee 1)(\|W\|_\infty + 1)(R \vee 1)) ,
\end{align*}
for every $0 < \tau \leq \cD\left(\NN(L, W, S, B), \|\cdot\|_{L^\infty([-R, R]^D)}\right)$.
The proof is finished.

\myendproof

\subsection{Proof of Lemma \ref{lem:gelu_and_der_2_lip}}
\label{sec:lem_gelu_and_der_2_lip_proof}
Note that due to the mean value theorem it suffices to show that the absolute value of the first and the second derivative of $\gelu$ are uniformly bounded by $2$.
Using \eqref{eq:gelu_def}, we deduce that
\begin{align*}
    \partial^1 \gelu(x) = \frac{x}{\sqrt{2\pi}} e^{-x^2 / 2} + \Phi(x),
    \quad \partial^2\gelu(x) = \frac{1}{\sqrt{2\pi}}e^{-x^2 / 2}(2 - x^2), \quad x \in \R.
\end{align*}
Next, taking into account the fact that $t \, e^{-t}$ for any $t \geq 0$, we conclude that
\begin{align*}
    |\partial^2 \gelu(x)|
    \leq \sqrt{\frac{2}{\pi}}\left(1 + \sup_{t \geq 0}(t \, e^{-t})\right)
    \leq 2 , \quad \text{for all $x \in \R$} .
\end{align*}
As for the first derivative, it suffices to evaluate it at the points, where the second derivative is zero, that is $x = \pm \sqrt{2}$.
Therefore, we obtain
\begin{align*}
    |\partial^1 \gelu(x)| \leq \frac{e^{-1}}{\sqrt{\pi}} + 1 \leq 2,
    \quad \text{for all $x \in \R$.}
\end{align*}
The proof is complete.

\myendproof

\section{Proof of Theorem \ref{th:estimation_dsm}}
\label{sec:th_estimation_dsm_proof}

The proof structure shares similarities with the one of Theorem \ref{th:estimation}.
We are going to apply the tail inequality for unbounded empirical process outlined in Theorem \ref{th:tail_ineq_unb_new} and Remark \ref{rem:erm_localization_bound}.
Now ensure that the conditions of the above results are met.

\noindent
\textbf{Step 1: bounding the $\psi_1$-diameter of the excess loss class.}
\quad
First, define the excess loss class
\begin{align*}
    \cF = \big\{ \ell_t(s, \cdot) - \ell_t(s_t^*, \cdot) \, : \, s \in \cS_{DSM}(L, W, S, B) \big\} .
\end{align*}
The following lemma provides a bound on the $\psi_1$ diameter of $\cF$.
\begin{Lem}
\label{lem:psi1_norm_bound_dsm}
    Under Assumption \ref{asn:relax_man}, the following holds:
    \begin{align*}
        \left\|\sup_{s \in \cS_{DSM}(L, W, S, B)} |\ell_t(s, X_0) - \ell_t(s_t^*, X_0)| \right\|_{\psi_1}
        \lesssim \frac{D}{\sigma_t^2} + \frac{m_t^2 D(C_0^2 \vee 1)}{\sigma_t^4} .
    \end{align*}
\end{Lem}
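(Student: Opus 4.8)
The plan is to mimic the proof of Lemma~\ref{lem:psi1_norm_bound}: I will bound $\sup_{s \in \cS_{DSM}(L, W, S, B)} |\ell_t(s, X_0)|$ and $|\ell_t(s_t^*, X_0)|$ separately by deterministic functions of $\|X_0\|$, and then pass to the $\psi_1$-norm using Assumption~\ref{asn:relax_man}. The starting point is the conditional Gaussian representation $X_t = m_t X_0 + \sigma_t W$ with $W \sim \cN(0, I_D)$ independent of $X_0$ (see \eqref{eq:cond_law_ou}), which gives
\[
    \ell_t(s, X_0) = \E_W \left\| s(m_t X_0 + \sigma_t W) + \frac{W}{\sigma_t} \right\|^2 \leq 2\, \E_W \big\| s(m_t X_0 + \sigma_t W) \big\|^2 + \frac{2D}{\sigma_t^2}.
\]
In contrast with the implicit score matching loss, there is no divergence term here, so this part of the analysis is actually shorter.

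Next I would control the inner expectation in the two cases. For $s(x) = (-x + m_t f(x))/(m_t^2 \tilde\sigma^2 + \sigma_t^2) \in \cS_{DSM}(L, W, S, B)$, the denominator is at least $\sigma_t^2$ and $\|f(x)\| \leq \sqrt{D}\,C_0$ by the $W^{0,\infty}$-bound in Definition~\ref{def:dsm_score_class}, hence $\|s(x)\| \leq (\|x\| + m_t \sqrt{D}\,C_0)/\sigma_t^2$. Substituting $x = m_t X_0 + \sigma_t W$, using $\|m_t X_0 + \sigma_t W\|^2 \leq 2 m_t^2 \|X_0\|^2 + 2\sigma_t^2 \|W\|^2$ and $\E\|W\|^2 = D$, yields
\[
    \sup_{s \in \cS_{DSM}(L, W, S, B)} |\ell_t(s, X_0)| \lesssim \frac{m_t^2 \|X_0\|^2 + m_t^2 D C_0^2}{\sigma_t^4} + \frac{D}{\sigma_t^2}.
\]
For the true score $s_t^*(x) = (-x + m_t f^*(x, t))/(m_t^2 \sigma^2 + \sigma_t^2)$ from \eqref{eq:true_score_def}, the function $f^*(\cdot, t)$ is a convex combination of the vectors $g^*(u)$, so $\|f^*(x, t)\| \leq \|g^*\|_{L^\infty([0, 1]^d)} \leq 1$, and the denominator is again at least $\sigma_t^2$; the same computation gives $|\ell_t(s_t^*, X_0)| \lesssim m_t^2 \|X_0\|^2/\sigma_t^4 + m_t^2/\sigma_t^4 + D/\sigma_t^2$.

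Combining the two bounds with the triangle inequality, $\sup_s |\ell_t(s, X_0) - \ell_t(s_t^*, X_0)| \lesssim m_t^2 \|X_0\|^2/\sigma_t^4 + m_t^2 D(C_0^2 \vee 1)/\sigma_t^4 + D/\sigma_t^2$. It then remains to take the $\psi_1$-norm: by Assumption~\ref{asn:relax_man}, $\|X_0\|^2 \leq 2\|g^*(U)\|^2 + 2\sigma^2 \|\xi\|^2 \leq 2 + 2\sigma^2 \|\xi\|^2$ almost surely, and since $\|\xi\|^2 \sim \chi^2(D)$ has $\psi_1$-norm $\lesssim D$, we obtain $\big\| \|X_0\|^2 \big\|_{\psi_1} \lesssim 1 + \sigma^2 D \lesssim D$ using $\sigma \in [0, 1)$. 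Because the $\psi_1$-norm of a constant is comparable to its absolute value and the $\psi_1$-norm is subadditive, collecting the terms and absorbing $1 + \sigma^2 D \lesssim D \leq D(C_0^2 \vee 1)$ produces the claimed bound. The whole argument is a routine computation once the conditional representation is in place; the only point deserving mild care is to use the sharp lower bounds $m_t^2 \tilde\sigma^2 + \sigma_t^2 \geq \sigma_t^2$ and $m_t^2 \sigma^2 + \sigma_t^2 \geq \sigma_t^2$ (rather than cruder ones) so that no superfluous powers of $m_t$ or $\sigma_t^{-1}$ appear in the final estimate.
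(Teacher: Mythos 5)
Your proposal is correct and follows essentially the same route as the paper's proof: bound $\ell_t(s,X_0)$ and $\ell_t(s_t^*,X_0)$ separately using the conditional Gaussian law \eqref{eq:cond_law_ou}, the $W^{0,\infty}$-bound on $f$, the bound $\|f^*(\cdot,t)\|\leq 1$, and the lower bound $m_t^2\tilde\sigma^2+\sigma_t^2\geq\sigma_t^2$, then pass to the $\psi_1$-norm via $\big\|\|X_0\|^2\big\|_{\psi_1}\lesssim 1+\sigma^2 D$. The only cosmetic difference is that you write the conditional expectation explicitly through $X_t=m_tX_0+\sigma_t W$, whereas the paper keeps it as $\E[\,\cdot\mid X_0]$; the resulting estimates are identical.
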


The proof of Lemma \ref{lem:psi1_norm_bound_dsm} is deferred to Appendix \ref{sec:lem_psi1_norm_bound_dsm_proof}.

\noindent
\textbf{Step 2: verifying the Bernstein condition.}
\quad
The next lemma demonstrates that the Bernstein condition holds, a crucial property for achieving fast convergence rates.
Furthermore, it establishes that the score Jacobian matrix approximation error is controlled by the score matching error.

\begin{Lem}
\label{lem:bernstein_cond_dsm}
    For any $s \in \cS_{DSM}(L, W, S, B)$, $\alpha \in \N$ such that $\alpha \geq 2$, it holds that
    \begin{align*}
        (i) &\quad \Var[\ell_t(s, X_0) - \ell_t(s_t^*, X_0)]
        \lesssim \left\{ \frac{m_t^2 D (C_0^2 + \alpha)}{\sigma_t^4} + \frac{D \alpha}{\sigma_t^2} \right\}^{1 + 1 / \alpha} \|s - s_t^*\|_{L^2(\sfp_t^*)}^{2 - 2 / \alpha}, \\
        (ii) &\quad \big\| \|\nabla(s - s_t^*)\|_F \big\|_{L^2(\sfp_t^*)}^2 \lesssim \frac{\alpha^2 D(D + \alpha)(C_0^{2 / \alpha} \vee C_\alpha^{2 / \alpha} \vee 1)}{\sigma_t^{4 + 4 / \alpha} (m_t^2\sigma^2 + \sigma_t^2)} \|s - s_t^*\|_{L^2(\sfp_t^*)}^{2 - 2 / \alpha} .
    \end{align*}
    We emphasize that the hidden constant does not depend on $s$.
\end{Lem}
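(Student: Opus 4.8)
For part $(i)$ the first move is to rewrite the excess loss in a form that is manifestly quadratic in $\Delta := s - s_t^*$. Put $v := -(X_t - m_t X_0)/\sigma_t^2$, so that $\ell_t(s, X_0) = \E[\|s(X_t) - v\|^2 \mid X_0]$, and recall Tweedie's formula $\E[v \mid X_t] = s_t^*(X_t)$ for the Ornstein--Uhlenbeck transition. Expanding $\|s(X_t) - v\|^2 - \|s_t^*(X_t) - v\|^2 = \|\Delta(X_t)\|^2 - 2\langle\Delta(X_t), w\rangle$ with $w := v - s_t^*(X_t) = m_t\big(X_0 - \E[X_0 \mid X_t]\big)/\sigma_t^2$, and noting $\E[w \mid X_t] = 0$, one gets $\ell_t(s, X_0) - \ell_t(s_t^*, X_0) = \E\big[\|\Delta(X_t)\|^2 - 2\langle\Delta(X_t), w\rangle \,\big|\, X_0\big]$ (whose expectation is $\|\Delta\|_{L^2(\sfp_t^*)}^2$, consistent with \eqref{eq:dsm_exp_loss_diff_fisher}). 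Conditional Jensen then gives
\begin{align*}
    \Var\big[\ell_t(s, X_0) - \ell_t(s_t^*, X_0)\big]
    \leq \E\big[\big(\|\Delta(X_t)\|^2 - 2\langle\Delta(X_t), w\rangle\big)^2\big]
    \lesssim \E\|\Delta\|^4 + \E\big[\|\Delta\|^2\|w\|^2\big].
\end{align*}

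To both terms I would apply Hölder's inequality with exponents $\alpha/(\alpha-1)$ and $\alpha$, exactly as in Step~3 of the proof of Lemma~\ref{lem:bernstein_cond}, extracting $(\E\|\Delta\|^2)^{1 - 1/\alpha} = \|s - s_t^*\|_{L^2(\sfp_t^*)}^{2 - 2/\alpha}$ and reducing everything to $\E\|\Delta\|^{2\alpha+2}$ and $\E[\|\Delta\|^2\|w\|^{2\alpha}]$ (the latter handled via Cauchy--Schwarz, $\E[\|\Delta\|^2\|w\|^{2\alpha}] \le (\E\|\Delta\|^4)^{1/2}(\E\|w\|^{4\alpha})^{1/2}$). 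These are estimated from three ingredients: the pointwise growth bound $\|\Delta(x)\| \lesssim \sigma_t^{-2}\big(\|x\| + m_t\sqrt{D}(C_0 \vee 1)\big)$, which follows from $a := m_t^2\tilde\sigma^2 + \sigma_t^2 \geq \sigma_t^2$, $a^* := m_t^2\sigma^2 + \sigma_t^2 \geq \sigma_t^2$, $\max_l|f_l|_{W^{0,\infty}(\R^D)} \leq C_0$ and $\|f^*(\cdot,t)\|_{L^\infty(\R^D)} \leq \sqrt D$; the Gaussian-mixture moment bound $\E\|X_t\|^{2k} \lesssim (m_t^2 D)^k + \big(Ck(m_t^2\sigma^2 + \sigma_t^2)D\big)^k$, via the sub-exponential moments of $\chi^2(D)$; and, using the Tweedie expression for $w$ together with $\E\|X_0 - \E[X_0 \mid X_t]\|^{2k} \leq 2^{2k}\E\|X_0\|^{2k} \lesssim (CkD)^k$ (Assumption~\ref{asn:relax_man} with $\sigma < 1$), the bound $\E\|w\|^{2k} \lesssim \big(Ck\,m_t^2 D/\sigma_t^4\big)^k$. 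Plugging these in, invoking Stirling, and repeatedly using the identity $m_t^2 + \sigma_t^2 = 1$ (which turns $m_t^2\sigma^2\sigma_t^{-4} + \sigma_t^{-2}$ into $\sigma_t^{-4}$ and thereby prevents spurious powers $\sigma_t^{-k}$, $k>4$, from dividing $\|X_t\|$-type quantities by $\sigma_t^2$) delivers claim~$(i)$ with the announced $B_\alpha = \{m_t^2 D(C_0^2 + \alpha)/\sigma_t^4 + D\alpha/\sigma_t^2\}^{1 + 1/\alpha}$.

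Part $(ii)$ is a corollary of Lemma~\ref{lem:bernstein_cond_div}$(ii)$. Although that lemma is stated under Assumption~\ref{asn:relax_man} for $\sfp^*$, its proof only uses that $\sfp^*$ is an isotropic-Gaussian convolution, and $\sfp_t^*$ is the convolution of $m_t g^*(U)$ (which still satisfies the normalization since $m_t \leq 1$) with $\cN(0, (m_t^2\sigma^2 + \sigma_t^2)I_D)$, $m_t^2\sigma^2 + \sigma_t^2 \in (0,1)$; hence the inequality applies verbatim with $\sigma$ replaced by $b := \sqrt{m_t^2\sigma^2 + \sigma_t^2}$. Applying it to $h = s - s_t^*$ and substituting $\|s - s_t^*\|_{L^2(\sfp_t^*)}^2 \lesssim \sigma_t^{-4}D(C_0^2\vee1)$ (from the growth bound above and $\E\|X_t\|^2 \lesssim D$) together with $|s - s_t^*|_{W^{\alpha,2}(\sfp_t^*)}^2 \lesssim \sigma_t^{-4\alpha-4}\big(m_t^2 DC_\alpha^2 + 4^\alpha(D+\alpha)^\alpha\big)$ — the first summand since $\partial^\bk s = (m_t/a)\partial^\bk f$ for $|\bk| = \alpha \geq 2$ and $|f_l|_{W^{\alpha,2}(\sfp_t^*)} \leq C_\alpha\sigma_t^{-2\alpha}$, the second being the bound on the higher-order derivatives of the true score $s_t^*$, which holds exactly as for $s^*$ (cf.\ the use of Lemma~\ref{lem:score_derivatives} in the proof of Lemma~\ref{lem:bernstein_cond}) since $s_t^*$ is also the score of a Gaussian convolution — and then simplifying with Stirling and $\sigma_t^2 \leq b^2 \leq 1$ yields claim~$(ii)$.

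The delicate part is the moment bookkeeping in $(i)$: since $\Delta = s - s_t^*$ is unbounded (it contains a term linear in $x$) one may only use its growth rate, and the combination of the $m_t$- and $\sigma_t$-powers must be organised carefully — it is Tweedie's formula that exposes the factor $m_t$ in $w$ (without which the $m_t^2$ in the claimed $B_\alpha$ would be lost), and it is the identity $m_t^2 + \sigma_t^2 = 1$ that keeps all noise powers at $\sigma_t^{-4}$. Once the correct scales for $\E\|\Delta\|^{2\alpha+2}$ and $\E\|w\|^{2k}$ are in place, both parts are routine; part~$(ii)$, in particular, is essentially immediate once the applicability of Lemma~\ref{lem:bernstein_cond_div} to $\sfp_t^*$ is noted.
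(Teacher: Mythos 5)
Your proposal is correct and follows essentially the same route as the paper: conditional Jensen plus H\"older with exponents $\alpha/(\alpha-1)$ and $\alpha$, growth and moment bounds drawn from Definition \ref{def:dsm_score_class}, Assumption \ref{asn:relax_man} and the conditional law \eqref{eq:cond_law_ou}, and, for part $(ii)$, Lemma \ref{lem:bernstein_cond_div} applied to $\sfp_t^*$ viewed as satisfying Assumption \ref{asn:relax_man} with generator $m_t g^*$ and noise variance $m_t^2\sigma^2+\sigma_t^2$, combined with Lemma \ref{lem:score_derivatives} and the class constraints. The only real difference is cosmetic: you expose the factor $m_t$ via Tweedie's formula for $w = v - s_t^*(X_t)$, whereas the paper bounds $\big\|s(X_t)+s_t^*(X_t)-2\nabla_{X_t}\log\sfp_t^*(X_t\,|\,X_0)\big\|$ directly using $X_t - m_t X_0 \sim \cN(0,\sigma_t^2 I_D)$; both give the same $m_t^2/\sigma_t^4$ and $1/\sigma_t^2$ scales and hence the same $B_\alpha$.
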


The proof of Lemma \ref{lem:bernstein_cond_dsm} is moved to Appendix \ref{sec:lem_bernstein_cond_dsm_proof}.

\noindent
\textbf{Step 3: covering number evaluation.}
\quad
Now we evaluate the covering number of the excess loss class $\cF$ with respect to the empirical $L^2$-norm.
This is shown in the following lemma.

\begin{Lem}
\label{lem:loss_cov_num_bound_dsm}
Define the loss function class as
\begin{align*}
        \cL = \left\{\ell_t(s, \cdot) \, : \, s\in\cS_{DSM}(L, W, S, B)\right\}
    \end{align*}
    Then, for every $R > 0$ and $0 < \tau \leq \cD(\cL, \|\cdot\|_{L^\infty([-R, R]^D)})$, it holds that
    \begin{align*}
        \log\cN(\tau, \cL, \|\cdot\|_{L^\infty([-R, R]^D)})
        \lesssim SL\log(\tau^{-1}L(B \vee 1)(\|W\|_\infty + 1)\sigma_t^{-2}D(C_0 \vee R \vee 1) ) .
    \end{align*}
\end{Lem}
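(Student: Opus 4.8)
\quad The plan is to follow the proof of Lemma \ref{lem:loss_cov_num_bound_true} almost verbatim, the only genuine novelty being that $\ell_t(s, x)$ is an expectation over the conditional law of $X_t$, which is Gaussian and hence supported on all of $\R^D$, so the compact-domain approximation bound of Lemma \ref{lem:gelu_func_div_prox} cannot be invoked directly. First I would write $X_t = m_t X_0 + \sigma_t Z$ with $Z \sim \cN(0, I_D)$, so that for $s(x) = -x/(m_t^2\tilde\sigma^2 + \sigma_t^2) + m_t f(x)/(m_t^2\tilde\sigma^2 + \sigma_t^2)$ we have $\ell_t(s, x) = \E_Z \big[ \|s(m_t x + \sigma_t Z) + Z/\sigma_t\|^2 \big]$. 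Then for $s^{(1)}, s^{(2)} \in \cS_{DSM}(L, W, S, B)$ with parameters $(\tilde\sigma_j, f^{(j)})$, using $\|a\|^2 - \|b\|^2 = \langle a - b, a + b\rangle$,
\[
    \big| \ell_t(s^{(1)}, x) - \ell_t(s^{(2)}, x) \big|
    \leq \E_Z \Big[ \big\| s^{(1)}(y) - s^{(2)}(y) \big\| \cdot \big\| s^{(1)}(y) + s^{(2)}(y) + 2Z/\sigma_t \big\| \Big], \quad y = m_t x + \sigma_t Z.
\]

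Next I would estimate the two factors. Using $m_t^2\tilde\sigma^2 + \sigma_t^2 \geq \sigma_t^2$, $m_t \leq 1$, $\|f^{(j)}\|_{L^\infty(\R^D)} \leq \sqrt D C_0$ and $\tilde\sigma_j \in [0, 1)$, one gets $\|s^{(j)}(y)\| \lesssim (\|y\| + \sqrt D C_0)/\sigma_t^2$, hence $\|s^{(1)}(y) + s^{(2)}(y) + 2Z/\sigma_t\| \lesssim (\|y\| + \sqrt D C_0 + \sigma_t\|Z\|)/\sigma_t^2$. For the difference, splitting off the linear parts and adding and subtracting $m_t f^{(1)}(y)/(m_t^2\tilde\sigma_2^2 + \sigma_t^2)$, together with $|1/(m_t^2\tilde\sigma_1^2 + \sigma_t^2) - 1/(m_t^2\tilde\sigma_2^2 + \sigma_t^2)| \leq 2|\tilde\sigma_1 - \tilde\sigma_2|/\sigma_t^4$, yields
\[
    \big\| s^{(1)}(y) - s^{(2)}(y) \big\|
    \lesssim \frac{(\|y\| + \sqrt D C_0)\,|\tilde\sigma_1 - \tilde\sigma_2|}{\sigma_t^4} + \frac{\|f^{(1)}(y) - f^{(2)}(y)\|}{\sigma_t^2}.
\]
This is the exact DSM analogue of \eqref{eq:l_s1_l_s2_prox}, with $|a_1 - a_2|$ replaced by $|\tilde\sigma_1 - \tilde\sigma_2|$ and no divergence term (the DSM loss has none).

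The key step is the truncation. I would fix a radius $R_0 > 0$ and split $\E_Z = \E_Z[\1(\|Z\|_\infty \leq R_0)\,\cdot\,] + \E_Z[\1(\|Z\|_\infty > R_0)\,\cdot\,]$. On the first event $y \in [-(R + R_0), R + R_0]^D$, so Lemma \ref{lem:gelu_func_div_prox}$(i)$ applied with radius $R + R_0$ bounds $\|f^{(1)}(y) - f^{(2)}(y)\|$ whenever the weights of $f^{(1)}, f^{(2)}$ are $\eps$-close, and the contribution is $\lesssim \sigma_t^{-6} D (R + R_0 \vee C_0)^2 \big( |\tilde\sigma_1 - \tilde\sigma_2| \vee 4^L(B\vee 1)^L(\|W\|_\infty + 1)^L (R + R_0 \vee 1)\eps \big)$. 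On the tail event I would bound crudely by $\|s^{(1)}(y) - s^{(2)}(y)\| \lesssim (\|y\| + \sqrt D C_0)/\sigma_t^2$, so that the tail contribution is $\lesssim \sigma_t^{-4}\,\E_Z\big[ \1(\|Z\|_\infty > R_0)(\|y\| + \sqrt D C_0)^2 \big] \lesssim \sigma_t^{-4}\,\E_Z\big[ \1(\|Z\|_\infty > R_0)(D R^2 + D C_0^2 + \|Z\|^2) \big]$; by $\p(\|Z\|_\infty > R_0) \leq 2D e^{-R_0^2/2}$ and Cauchy--Schwarz for the $\|Z\|^2$ term, this is $\lesssim \sigma_t^{-4} D^2 (R^2 \vee C_0^2 \vee 1) e^{-R_0^2/4}$. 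Choosing $R_0 \asymp \sqrt{\log\big(\tau^{-1} D (R \vee C_0 \vee 1)\sigma_t^{-1}\big)}$ makes the tail $\lesssim \tau$. I expect verifying these truncated Gaussian moments and keeping $R_0$'s dependence only logarithmic to be the main technical obstacle — everything else is bookkeeping.

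Finally, to assemble the bound: let $\cW_\eps$ be a minimal $\eps$-net of the weight class $\cW(L, W, S, B)$ in $\|\cdot\|_\infty$ and $\cA_\eps$ an $\eps'$-net of $[0, 1)$ for $\tilde\sigma$, with $\eps'$ chosen so that $\sigma_t^{-6} D(R + R_0 \vee C_0)^2 \eps' \lesssim \tau$ and $\eps$ so that $\sigma_t^{-6}D(R+R_0\vee C_0)^2\cdot 4^L(B\vee 1)^L(\|W\|_\infty+1)^L(R+R_0\vee 1)\eps \lesssim \tau$; this forces $\log(1/\eps) \asymp \log(1/\tau) + L\log\!\big( L(B\vee 1)(\|W\|_\infty+1)\sigma_t^{-2}D(C_0\vee R\vee 1)\big)$. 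The induced net of $\cS_{DSM}$ is a $\tau$-net of $\cL$, and by counting $S$ non-zero weight locations among $L(\|W\|_\infty^2 + \|W\|_\infty)$ positions plus an $\eps$-net of $[-B, B]^S$ (and $|\cA_\eps| \leq 1/\eps'$), one obtains $\log|\cS_\eps| \lesssim SL\log\!\big(\tau^{-1} L(B\vee 1)(\|W\|_\infty+1)\sigma_t^{-2}D(C_0\vee R\vee 1)\big)$, where the $\log\log$ factors coming from $R_0$ are absorbed into the displayed bound. This is exactly the claimed estimate.
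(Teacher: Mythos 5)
Your proposal is correct and follows essentially the same route as the paper's proof: bound $|\ell_t(s^{(1)},x)-\ell_t(s^{(2)},x)|$ by a product of a "difference" and a "sum" factor, truncate the Gaussian conditional law so that Lemma \ref{lem:gelu_func_div_prox} can be applied on a compact cube whose radius grows only like $\sqrt{\log(1/\tau)}$, and then count a product net over $\tilde\sigma$ and the network weights. The only (immaterial) differences are that you truncate $Z$ rather than $X_t$ and control the tail by a crude second-moment bound on the scores instead of using only the uniform bound $\|f^{(j)}\|_\infty\leq C_0$ as the paper does; both yield the same logarithmic choice of radius and the claimed entropy bound.
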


We move the proof of Lemma \ref{lem:loss_cov_num_bound_dsm} to Appendix \ref{sec:lem_loss_cov_num_bound_dsm_proof}.
Note that for any $ 0 < \tau \leq \cD(\cF, L^2(\sfP_n))$, it holds that
\begin{align*}
    \log \cN(\tau, \cF, L^2(\sfP_n))
    \leq \log \cN(\tau, \cL, L^2(\sfP_n))
    \leq \log \cN(\tau, \cL, \|\cdot\|_{L^\infty([-R_n, R_n]^D)}),
\end{align*}
where $R_n = \max_{1 \leq i \leq n}\|Y_i\|$.
Therefore, by Lemma \ref{lem:loss_cov_num_bound_dsm}, we have that
\begin{align*}
    \log(\tau, \cF, L^2(\sfP_n))
    \lesssim SL \log(\tau^{-1}D_n),
\end{align*}
where we defined
\begin{align}
\label{eq:dsm_dn_def}
    D_n = L(B \vee 1) (\|W\|_\infty + 1)\sigma_t^{-2}D(C_0 \vee R_n \vee 1) .
\end{align}
Moreover, it holds that
\begin{align*}
    (\E \log^3 D_n)^{1/3}
    \lesssim \log(L(B \vee 1) (\|W\|_\infty + 1)\sigma_t^{-2}D(C_0 \vee 1))
    + \left(\E \max_{1 \leq i \leq n}\log^3(\|Y_i\| \vee 1) \right)^{1/3}
\end{align*}
Applying Lemma \ref{lem:exp_max_log_data}, we conclude that
\begin{align}
\label{eq:dsm_dn_bound}
    (\E \log^3 D_n)^{1/3}
    \lesssim \log(L(B \vee 1) (\|W\|_\infty + 1)\sigma_t^{-2}D(C_0 \vee 1)) \log(2n)
\end{align}

\noindent
\textbf{Step 4: applying the tail inequality for unbounded empirical processes.}
\quad
We now apply the tail inequality, as detailed in Theorem \ref{th:tail_ineq_unb_new} and Remark \ref{rem:erm_localization_bound}.
In fact, the conditions of Theorem \ref{th:tail_ineq_unb_new} are satisfied with $\varkappa = 1 - 1 / \alpha$, $A = \cO(1)$, $\zeta = \cO(SL)$, and $D_n$ given by \eqref{eq:dsm_dn_def}.
Therefore, applying Remark \ref{rem:erm_localization_bound} with $\eps = 1/2$, we conclude that
\begin{align*}
    \E[\ell_t(\widehat{s}, X_0) - \ell_t(s_t^*, X_0)]
    &
    \lesssim \inf_{s \in \cS(L, W, S, B)}\E[\ell_t(s, X_0) - \ell_t(s_t^*, X_0)]
    + (B_\alpha \Upsilon(n, \delta))^{\alpha / (\alpha + 1)}
    \\&\quad
    + (\Psi \vee 1)\Upsilon(n, \delta) \log n ,
\end{align*}
with probability at least $1 - \delta$.
Here,
\begin{align*}
    \Upsilon(n, \delta)
    &= \frac{1}{n}\left(\log A + \zeta \log n + \zeta(\E \log^3 D_n)^{1/3} + \log(e / \delta) \right)
\end{align*}
and
\begin{align*}
    \Psi = \left\|\sup_{s \in \cS_{DSM}(L, W, S, B)}|\ell_t(s, X_0) - \ell_t(s_t^*, X_0)| \right\|_{\psi_1} .
\end{align*}
In addition, $B_\alpha$ is given by Lemma \ref{lem:bernstein_cond_dsm}:
\begin{align}
\label{eq:B_a_dsm_def}
    B_\alpha = \left\{ \frac{m_t^2 D (C_0^2 + \alpha)}{\sigma_t^4} + \frac{D \alpha}{\sigma_t^2} \right\}^{1 + 1 / \alpha} .
\end{align}
From \eqref{eq:dsm_exp_loss_diff_fisher} we deduce that
\begin{align*}
    \E[\ell_t(s, X_0) - \ell_t(s_t^*, X_0)] = \|s - s_t^*\|^2_{L^2(\sfp_t^*)},
    \quad
    \text{for all } s \in \cS_{DSM}(L, W, S, B) .
\end{align*}
Therefore, from \eqref{eq:dsm_dn_bound} and Lemma \ref{lem:psi1_norm_bound_dsm} we conclude that
\begin{align}
\label{eq:dsm_gen_bound_almost_subst}
    \notag
    \|\widehat{s} - s_t^*\|^2_{L^2(\sfp_t^*)}
    &
    \lesssim \inf_{s \in \cS_{DSM}(L, W, S, B)}\|s - s_t^*\|^2_{L^2(\sfp_t^*)} + \left\{\frac{D\alpha}{\sigma_t^2} + \frac{m_t^2D(C_0^2 + \alpha)}{\sigma_t^4}\right\} 
    \\&\quad
    \cdot \frac{SL \log(L(B \vee 1) (\|W\|_\infty + 1)\sigma_t^{-2}D(C_0 \vee 1) ) \log^2(2n) \log(e / \delta)}{n^{\alpha / (\alpha + 1)}} .
\end{align}
with probability at least $1 - \delta$.

\noindent
\textbf{Step 5: deriving the final generalization bound.}\quad
Now, we will apply the approximation result outlined in Theorem \ref{thm:approx_main}.
Specifically, for the precision parameter $\eps \in (0, 1)$, which will be determined later in the proof, we configure the architecture $(L, W, S, B)$ as presented in Theorem \ref{thm:approx_main} for the value of $m = \alpha$.
First, we set in $\cS_{DSM}(L, W, S, B)$ (see Definition \ref{def:dsm_score_class})
\begin{align}
\label{eq:C_0_1_alpha_def_dsm}
    \quad C_j \asymp \exp\left\{\cO \left(j^2 \log(\alpha D) + j^2 \log\log\frac1{\eps} + j^2 \log \log \frac1{\sigma_t^{2}}  \right) \right\}, \quad j \in \{0, \alpha\} .
\end{align}
Since $\sfp_t^*$ satisfies Assumption \ref{asn:relax_man} with the generator $m_tg^*$ and the variance parameter $m_t^2\sigma^2 + \sigma_t^2$, we have that
\begin{align*}
    \inf_{s \in \cS_{DSM}(L, W, S, B)} \|s - s_t^*\|_{L^2(\sfp_t^*)}^2
    \lesssim \frac{D^2 \eps^{2\beta}}{ (m_t^2\sigma^2 + \sigma_t^2)^{4}} \log^2(1 / \eps) \log^2\left(\frac{\alpha D}{\sigma_t^{2}} \right) .
\end{align*}
In addition, Theorem \ref{thm:approx_main} implies that
\[
    SL\log(L(B\vee 1)(\|W\|_\infty + 1))
    \lesssim \frac{D^{24 + P(d, \beta)} \alpha^{217 + 17P(d, \beta)}}{ \eps^{d} \sigma_t^{48 + 4P(d, \beta)}} \left(\log(\alpha D \sigma_t^{-2})\log(1 / \eps)\right)^{65 + 4P(d, \beta)} .
\]
Therefore, we deduce from \eqref{eq:dsm_gen_bound_almost_subst} that if $\alpha \asymp \sqrt{\log n} + \sqrt{\log(\sigma_t^{-2})}$, then
\begin{align*}
    \|\hat{s} - s_t^*\|^2_{L^2(\sfp_t^*)}
    &
    \lesssim \frac{D^2\eps^{2\beta}\log^2(1 / \eps) \log^2(D \sigma_t^{-2} \log n)}{(m_t^2\sigma^2 + \sigma_t^2)^4}
    + \frac{D^{25 + P(d, \beta)}\alpha^{217 + 17P(d, \beta)}}{\sigma_t^{52 + 4P(d, \beta)} n \eps^{d}}
    \\&\quad
    \cdot \left( \log(D \sigma_t^{-2}n)\log(1 / \eps) \right)^{65 + 4P(d, \beta)} \log(e / \delta) 
    \exp\left\{\cO \left( \sqrt{\log n} + \sqrt{\log\frac1{\sigma_t^{2}}} \right) \right\} .
\end{align*}
with probability at least $1 - \delta$.
Thus, setting $\eps = (n\sigma_t^{52 + 4P(d, \beta)})^{-1 / (2\beta + d)} \in (0, 1)$ ensures that, with probability at least $1 - \delta$,
\begin{align}
\label{eq:dsm_gen_in_proof}
    \|\hat{s} - s_t^*\|^2_{L^2(\sfp_t^*)}
    \lesssim \frac{D^{25 + P(d, \beta)}}{(m_t^2\sigma^2 + \sigma_t^2)^{4}} \, \left( n\sigma_t^{52 + 4P(d, \beta)} \right)^{-\frac{2\beta}{2\beta + d}} \log(e / \delta) \sL(\sigma_t, D, n),
\end{align}
where we defined
\begin{align*}
    \sL(\sigma_t, D, n) = (\log(Dn\sigma_t^{-2}))^{239 + 17 P(d, \beta)} \exp\left\{\cO\left( \sqrt{\log n} + \sqrt{\log(\sigma_t^{-2})} \right) \right\} .
\end{align*}
Note that Theorem \ref{thm:approx_main} requires that the sample size be sufficiently large so that is satisfies
\begin{align*}
    n\sigma_{\min}^{52 + 4P(d, \beta)}
    \gtrsim 1 \vee \left\{ \frac{D \alpha^2 \log(n\alpha D \sigma_t^{-2})}{m_t^2 \sigma^2 + \sigma_t^2} \right\}^{\frac{2\beta + d}{\beta}}
    \vee \left\{ \frac{ D \alpha^2\log(n\alpha D\sigma_t^{-2}) }{m_t^2 \sigma^2 + \sigma_t^2}\right\}^{2\beta + d} .
\end{align*}
Taking into account the choice of $\alpha$, the bound simplifies to
\begin{align*}
    n\sigma_{\min}^{52 + 4P(d, \beta)}
    \gtrsim \left\{ D(m_t^2\sigma^2 + \sigma_t^2)^{-1} \log^2(nD\sigma_t^{-2}) \right\}^{\frac{2\beta + d}{\beta \wedge 1}} .
\end{align*}
Finally, using Theorem \ref{thm:approx_main}, we specify the parameters of the class $\cS_{DSM}(L, W, S, B)$ given in Definition \ref{def:dsm_score_class}:
\[
    L \lesssim \log(nD\sigma_t^{-2}),
    \quad \log B \lesssim D^8 \left( \log\frac{Dn}{\sigma_t^{2}} \right)^{90},
\]
and
\[
    \|W\|_\infty \vee S \lesssim \frac{D^{16 + P(d, \beta)}}{\sigma_t^{48 + 4P(d, \beta)}} \left( n\sigma_t^{52 + 4P(d, \beta)} \right)^{\frac{d}{2\beta + d}} \left( \log\frac{nD}{\sigma_t^{2}} \right)^{142 + 17 P(d, \beta)} .
\]
We also obtain from \eqref{eq:C_0_1_alpha_def_dsm} that
\begin{align}
\label{eq:c_0_c_alpha_dsm}
    C_0 \asymp 1,
    \quad C_\alpha \asymp \left(D\log(n) \log\frac1{\sigma_t^{2}} \right)^{\cO \left (\log(n\sigma_t^{-2}) \right)} .
\end{align}

\noindent
\textbf{Step 6: Jacobian matrix estimation.}\quad
From \eqref{eq:c_0_c_alpha_dsm} and Lemma \ref{lem:bernstein_cond_dsm} we deduce that
\begin{align*}
    \big\| \|\nabla(\hat{s} - s_t^*)\|_F \big\|^2_{L^2(\sfp_t^*)}
    \lesssim \frac{(D\log n \log(\sigma_t^{-2}))^{\cO(\sqrt{\log n} + \sqrt{\log(\sigma_t^{-2})})}}{\sigma_t^4(m_t^2\sigma^2 + \sigma_t^2)} \|\hat{s} - s_t^*\|^{2 - 2 / \alpha}_{L^2(\sfp_t^*)} 
\end{align*}
with unit probability.
Therefore, using \eqref{eq:dsm_gen_in_proof}, we conclude that
\begin{align*}
    \big\| \|\nabla(\hat{s} - s_t^*)\|_F \big\|^2_{L^2(\sfp_t^*)}
    \lesssim \frac{(D\log n \log(\sigma_t)^{-2})^{\cO(\sqrt{\log n} + \sqrt{\log(\sigma_t^{-2})})} \log(e / \delta)}{\sigma_t^4(m_t^2\sigma^2 + \sigma_t^2)^5} \left( n\sigma_t^{52 + 4P(d, \beta)} \right)^{-\frac{2\beta}{2\beta + d}}
\end{align*}
with probability at least $1 - \delta$.
The proof is finished.

\myendproof

\subsection{Proof of Lemma \ref{lem:bernstein_cond_dsm}}
\label{sec:lem_bernstein_cond_dsm_proof}

\noindent
\textbf{Proving statement $(i)$.}\quad
First, we note that
\begin{align*}
    &
    \Var[\ell_t(s, X_0) - \ell_t(s_t^*, X_0)]
    \\&
    \leq \E[(\ell_t(s, X_0) - \ell_t(s_t^*, X_0))^2] \\&
    = \E\left[\left(\E[(s(X_t) - s_t^*(X_t))^\top(s(X_t) + s_t^*(X_t) - 2\nabla_{X_t}\log\sfp_t^*(X_t | X_0)) \, \vert \, X_0]\right)^2\right] .
\end{align*}
Using Jensen's inequality in conjunction with Cauchy-Schwarz inequality, we have
\begin{align*}
    \Var[\ell_t(s, X_0) - \ell_t(s_t^*, X_0)]
    \leq \E \left[ \|s(X_t) - s_t^*(X_t)\|^2 \; \big\|s(X_t) + s_t^*(X_t) - 2\nabla_{X_t}\log\sfp_t^*(X_t | X_0)) \big\|^2 \right] .
\end{align*}
The Hölder inequality, when applied with parameters $p = \alpha / (\alpha - 1)$ and $q = \alpha$, yields
\begin{align}
\label{eq:var_lt_aux}
    \notag
    &\Var[\ell_t(s, X_0) - \ell_t(s_t^*, X_0)] \\
    &
    \leq \|s - s_t^*\|_{L^2(\sfp_t^*)}^{2 - 2 / \alpha} \left\{ \E[\|s(X_t) - s_t^*(X_t)\|^2 \|s(X_t) + s_t^*(X_t) - 2\nabla_{X_t}\log\sfp_t^*(X_t | X_0)\|^{2\alpha}] \right\}^{1 / \alpha} .
\end{align}
According to Definition \ref{def:dsm_score_class}, the element $s \in \cS_{DSM}(L, W, S, B)$ has the following form:
\begin{align*}
    s(x) = -\frac{x}{m_t^2\tilde{\sigma}^2 + \sigma_t^2} + \frac{m_t f(x)}{m_t^2\tilde{\sigma}^2 + \sigma_t^2},
    \quad x \in \R^D .
\end{align*}
Hence, it holds that
\begin{align}
\label{eq:bern_cond_dsm_s_diff}
    \notag
    \|s(X_t) - s_t^*(X_t)\|^2
    &\leq \frac{2\|X_t\|^2}{\sigma_t^4} + \frac{4m_t^2\|f(X_t)\|^2}{(m_t^2\tilde{\sigma} + \sigma_t^2)^2} + \frac{4m_t^2\|f^*(X_t)\|^2}{(m_t^2\tilde{\sigma} + \sigma_t^2)^2} \\
    &\leq \frac{4m_t^2\|X_0\|^2}{\sigma_t^4} + \frac{4\|X_t - m_t X_0\|^2}{\sigma_t^4} + \frac{4m_t^2 (DC_0^2 + 1)}{\sigma_t^4},
\end{align}
where the last inequality uses the fact that $|f_l(X_t)| \leq C_0$ for any $1 \leq l \leq D$ with unit probability.
Similarly, we obtain
\begin{align*}
    \left\|s(X_t) + s_t^*(X_t) - 2\nabla_{X_t}\log\sfp_t^*(X_t | X_0)\right\|^2
    &\leq \frac{8\|X_t\|^2}{\sigma_t^4} + \frac{4 m_t^2 \|f(X_t)\|^2}{\sigma_t^4} + \frac{4m_t^2\|f^*(X_t)\|^2}{\sigma_t^4} \\
    &\leq \frac{16m_t^2\|X_0\|^2}{\sigma_t^4} + \frac{16\|X_t - m_t X_0\|^2}{\sigma_t^4} + \frac{4m_t^2 (DC_0^2 + 1)}{\sigma_t^4} .
\end{align*}
Substituting the derived bounds into \eqref{eq:var_lt_aux}, we deduce that
\begin{align*}
    &\Var[\ell_t(s, X_0) - \ell_t(s_t^*, X_0)] \\
    &
    \leq 256 
    \left\{ \E\left[ \left( \frac{m_t^2\|X_0\|^2}{\sigma_t^4} + \frac{\|X_t - m_t X_0\|^2}{\sigma_t^4} + \frac{m_t^2 (DC_0^2 + 1)}{\sigma_t^4} \right)^{\alpha + 1}\right] \right\}^{1 / \alpha}
    \|s - s_t^*\|_{L^2(\sfp_t^*)}^{2 - 2 / \alpha} .
\end{align*}
From Minkowski's inequality we find that
\begin{align*}
    &\Var[\ell_t(s, X_0) - \ell_t(s_t^*, X_0)]
    \leq
    256 \, \sigma_t^{-4 - 4 / \alpha} \|s - s_t^*\|_{L^2(\sfp_t^*)}^{2 - 2 / \alpha}
    \\&
    \cdot \left\{ m_t^2 \left( \E \|X_0\|^{2\alpha + 2} \right)^{1 / (\alpha + 1)} + \left( \E \|X_t - m_t X_0\|^{2\alpha + 2} \right)^{1 / (\alpha + 1)} + m_t^2 (DC_0^2 + 1) \right\}^{1 + 1 / \alpha} .
\end{align*}
Next, recall from Assumption \ref{asn:relax_man} that $\|X_0\|^2 \leq 2 + 2\sigma^2\|Z\|^2$, where $Z \sim \cN(0, I_D)$.
Moreover, we have that $X_t - m_t x \sim \cN(0, \sigma_t^2 I_D)$ for any $x \in \R^D$.
Thus, using \cite[Proposition 2.7.1]{vershynin2018high} and taking into account that $\|\|Z\|^2\|_{\psi_1} \lesssim D$, we conclude that
\begin{align*}
    \Var[\ell_t(s, X_0) - \ell_t(s_t^*, X_0)]
    \lesssim \left\{ \frac{m_t^2 D (C_0^2 + \alpha)}{\sigma_t^4} + \frac{D \alpha}{\sigma_t^2} \right\}^{1 + 1 / \alpha} \|s - s_t^*\|_{L^2(\sfp_t^*)}^{2 - 2 / \alpha} .
\end{align*}
Furthermore, the hidden constant is absolute and, thus, it does not depend on $s$.

\noindent
\textbf{Proving statement $(ii)$.}\quad
Note that $\sfp_t^*$ satisfies Assumption \ref{asn:relax_man} with the generator function $m_t g^*$ and the variance parameter $m_t^2\sigma^2 + \sigma_t^2$ due to \eqref{eq:cond_law_ou}.
Therefore, by Lemma \ref{lem:bernstein_cond_div}, we have that
\begin{align}
\label{eq:jac_dsm_aux}
    \big\| \|\nabla(s - s_t^*)\|_F \big\|_{L^2(\sfp_t^*)}^2
    & \notag
    \leq \frac{\alpha D^{1 - 1 / \alpha}}{m_t^2\sigma^2 + \sigma_t^2} \; \|s - s_t^*\|_{L^2(\sfp_t^*)}^{2 - 2 / \alpha} 
    \\&\quad
    \cdot \left\{ D\|s - s_t^*\|^2_{L^2(\sfp_t^*)} + (m_t^2 \sigma^2 + \sigma_t^2)^\alpha |s - s_t^*|^2_{W^{\alpha, 2}(\sfp_t^*)} \right\}^{1 / \alpha} .
\end{align}
From \eqref{eq:bern_cond_dsm_s_diff} we deduce that
\begin{align}
\label{eq:s_s_star_l2_dsm_eval}
    \notag
    \E \|s(X_t) - s_t^*(X_t)\|^2
    &\leq \frac{4m_t^2 \E \|X_0\|^2}{\sigma_t^4} + \frac{4 \E \|X_t - m_t X_0\|^2}{\sigma_t^4} + \frac{4m_t^2(DC_0^2 + 1)}{\sigma_t^4} \\
    &\leq \frac{4m_t^2(2 + 2\sigma^2 D)}{\sigma_t^4} + \frac{4D}{\sigma_t^2} + \frac{4m_t^2(DC_0^2 + 1)}{\sigma_t^4} ,
\end{align}
where the last inequality stems from Assumption \ref{asn:relax_man} and the conditional law given in \eqref{eq:cond_law_ou}.
Now we evaluate the weighted Sobolev norm
\begin{align*}
    \left|s - s_t^*\right|_{W^{\alpha, 2}(\sfp_t^*)}^2
    = \sum_{\substack{\bk \in \Z_+^D \\ |\bk| = \alpha}} \left\| \frac{m_t \partial^\bk f}{m_t^2\tilde{\sigma}^2 + \sigma_t^2} - \partial^\bk s_t^* \right\|^2_{L^2(\sfp_t^*)}
    \lesssim \sum_{\substack{\bk \in \Z_+^D \\ |\bk| = \alpha}} \left( \frac{m_t^2}{\sigma_t^4}\|\partial^\bk f\|^2_{L^2(\sfp_t^*)} + \|\partial^\bk s_t^*\|^2_{L^2(\sfp_t^*)} \right) .
\end{align*}
Now applying Lemma \ref{lem:score_derivatives} for $\sfp_t^*$, which satisfies Assumption \ref{asn:relax_man} with the generator $m_t g^*$ and the noise variance $m_t\sigma^2 + \sigma_t^2$, and using the fact that $|f_l|_{W^{\alpha, 2}(\sfp_t^*)} \leq C_\alpha \sigma_t^{-2\alpha}$ (see Definition \ref{def:dsm_score_class}), we arrive at
\begin{align*}
    |s - s_t^*|^2_{W^{\alpha, 2}(\sfp_t^*)}
    &\lesssim \sum_{\substack{\bk \in \Z_+^D \\ |\bk| = \alpha}}\left( \frac{m_t^2}{\sigma_t^4}DC_\alpha^2\sigma_t^{-4\alpha} + 4^\alpha \alpha!\sigma_t^{-4\alpha - 4} m_t^2 \right) \\
    &\lesssim (D + \alpha)^\alpha(C_\alpha^2 \vee 1)\sigma_t^{-4\alpha - 4}m_t^2(D + 4^\alpha \alpha!) .
\end{align*}
Putting the derived bound, \eqref{eq:jac_dsm_aux}, and \eqref{eq:s_s_star_l2_dsm_eval} together and using Stirling's approximation leads to
\begin{align*}
    &
    \big\| \|\nabla(s - s_t^*)\|_F \big\|^2_{L^2(\sfp_t^*)} \\
    &
    \lesssim \frac{\alpha D(D + \alpha)}{m_t^2\sigma^2 + \sigma_t^2} \left\{ \left(\frac{1}{\sigma_t^2} + \frac{m_t^2 (C_0^2 \vee 1)}{\sigma_t^4}\right)^{1 / \alpha} + \frac{C_\alpha^{2 / \alpha} \vee 1}{\sigma_t^{4 + 4 / \alpha}}m_t^{2 / \alpha} \alpha \right\} \|s - s_t^*\|_{L^2(\sfp_t^*)}^{2 - 2 / \alpha} \\
    &
    \lesssim \frac{\alpha^2 D(D + \alpha)}{(m_t^2\sigma^2 + \sigma_t^2) \sigma_t^{4 + 4 / \alpha}} \; \left( C_0^{2 / \alpha} \vee C_\alpha^{2 / \alpha} \vee 1 \right) \, \|s - s_t^*\|_{L^2(\sfp_t^*)}^{2 - 2 / \alpha} .
\end{align*}
Thus, claim $(ii)$ holds true, and the proof is finished.

\myendproof

\subsection{Proof of Lemma \ref{lem:psi1_norm_bound_dsm}}
\label{sec:lem_psi1_norm_bound_dsm_proof}
For any $s \in \cS_{DSM}(L, W, S, B)$ of the form
\[
    s(y) = -\frac{y}{m_t^2\tilde{\sigma}^2 + \sigma_t^2} + \frac{m_t f(y)}{m_t^2\tilde{\sigma}^2 + \sigma_t^2},
    \quad y \in \R^D,
\]
it holds that
\begin{align*}
    \ell_t(s, X_0)
    &
    = \E \left[ \left\|s(X_t) - \nabla_{X_t}\log\sfp_t^*(X_t | X_0)\right\|^2 \, \big\vert \, X_0 \right]
    \\&
    \leq 2 \E \left[ \left\| -\frac{X_t}{m_t^2\tilde{\sigma}^2 + \sigma_t^2} + \frac{m_t f(X_t)}{m_t^2\tilde{\sigma}^2 + \sigma_t^2} \right\|^2 \, \bigg\vert \, X_0\right]
    + \frac{2}{\sigma_t^4} \, \E\left[ \|X_t - m_t X_0\|^2 \, \big\vert \, X_0 \right]
    \\&
    \leq \frac{4}{\sigma_t^4} \E\left[ \|X_t\|^2 \, \big\vert \, X_0 \right] + \frac{2m_t^2 C_0^2 D}{\sigma_t^4} + \frac{2}{\sigma_t^4} \, \E\left[ \|X_t - m_t X_0\|^2 \, \big\vert \, X_0 \right],
\end{align*}
where the last inequality uses the fact $|f_l(X_t)| \leq C_0$ for each $1 \leq l \leq D$ with unit probability due to Definition \ref{def:dsm_score_class}.
Using the conditional distribution given in \eqref{eq:cond_law_ou}, we arrive at
\begin{align*}
    \ell_t(s, X_0) \leq \frac{8m_t^2 \|X_0\|^2}{\sigma_t^4} + \frac{10 D}{\sigma_t^2} + \frac{2m_t^2 D C_0^2}{\sigma_t^4} .
\end{align*}
Similarly, for $s_t^*$ given in \eqref{eq:true_score_def} we have that
\begin{align*}
    \ell_t(s_t^*, X_0) \leq \frac{8m_t^2 \|X_0\|^2}{\sigma_t^4} + \frac{10 D}{\sigma_t^2} + \frac{2m_t^2}{\sigma_t^4} .
\end{align*}
Therefore, it follows that
\begin{align*}
    \left\|\sup_{s \in \cS_{DSM}(L, W, S, B)} |\ell_t(s, X_0) - \ell_t(s_t^*, X_0)| \right\|_{\psi_1}
    \lesssim \frac{m_t^2}{\sigma_t^4} \left\| \|X_0\|^2 \right\|_{\psi_1} + \frac{D}{\sigma_t^2} + \frac{m_t^2(D C_0^2 + 1)}{\sigma_t^4} .
\end{align*}
Using Assumption \ref{asn:relax_man} and \cite[Proposition 2.7.1]{vershynin2018high}, we conclude that
\begin{align*}
    \left\|\sup_{s \in \cS_{DSM}(L, W, S, B)} |\ell_t(s, X_0) - \ell_t(s_t^*, X_0)| \right\|_{\psi_1}
    \lesssim \frac{D}{\sigma_t^2} + \frac{m_t^2 D(C_0^2 \vee 1)}{\sigma_t^4} .
\end{align*}
The proof is complete.

\myendproof

\subsection{Proof of Lemma \ref{lem:loss_cov_num_bound_dsm}}
\label{sec:lem_loss_cov_num_bound_dsm_proof}

\noindent
\textbf{Step 1: evaluating the proximity of the loss functions.}\quad
Consider $s^{(1)}, s^{(2)} \in \cS_{DSM}(L, W, S, B)$ (see Definition \ref{def:dsm_score_class}) of the form
\begin{align*}
    s^{(j)}(x) = -\frac{x}{m_t^2 \tilde{\sigma}_j^2 + \sigma_t^2} + \frac{m_t}{m_t^2\tilde{\sigma}_j^2 + \sigma_t^2} f^{(j)}(x), \quad x \in \R^D, \; j \in \{1, 2\} .
\end{align*}
Also fix $\tilde{R} > 0$ that will be determined later in the proof.
Hence, for any arbitrary $x \in [-\tilde{R}, \tilde{R}]^D$, it holds that
\begin{align}
\label{eq:dsm_loss_diff_aux}
    &\notag
    \left|\ell_t(s^{(1)}, x) - \ell_t(s^{(2)}, x)\right|
    \\&\notag
    = \E \left[ \left| \|s^{(1)}(X_t) - \nabla_{X_t}\log \sfp_t^*(X_t | X_0)\|^2 - \|s^{(2)}(X_t) - \nabla_{X_t}\log \sfp_t^*(X_t | X_0)\|^2 \right| \, \Bigg\vert \, X_0 = x \right] \\&
    \leq \left( \E \left[ \|s^{(1)}(X_t) - s^{(2)}(X_t)\|^2 \, \Big\vert \, X_0 = x \right] \right)^{1 / 2}
    \\&\quad \notag
    \cdot \left( \E \left[ \| s^{(1)}(X_t) + s^{(2)}(X_t) - 2\nabla_{X_t}\log\sfp_t^*(X_t | X_0) \|^2 \, \Big\vert \, X_0 = x \right] \right)^{1 / 2} ,
\end{align}
where the last inequality uses the Cauchy-Schwarz inequality.
Now let us evaluate
\begin{align}
\label{eq:score_diff_dsm_cond_exp}
    &\notag
    \E[\|s^{(1)}(X_t) - s^{(2)}(X_t)\|^2 \, \vert \, X_0 = x]
    \\
    \notag
    &
    \leq 2 \left| \frac{1}{m_t^2\tilde{\sigma}_1^2 + \sigma_t^2} - \frac{1}{m_t^2\tilde{\sigma}_2^2 + \sigma_t^2} \right|^2 \E[\|X_t\|^2 \, \vert \, X_0 = x]
    \\&\quad
    + 2\E\left[\left\| \frac{m_t f^{(1)}(X_t)}{m_t^2\tilde\sigma_1^2 + \sigma_t^2} - \frac{m_t f^{(2)}(X_t)}{m_t^2\tilde\sigma_2^2 + \sigma_t^2} \right\|^2 \, \Bigg\vert \, X_0 = x\right]
    \\
    \notag
    &
    \leq \frac{2 m_t^2 |\tilde{\sigma}_1^2 - \tilde\sigma_2^2|^2}{\sigma_t^8} \; \E \left[ \|X_t\|^2 \, \big\vert \, X_0 = x \right]
    + 4C_0^2D \, \frac{m_t^2|\tilde{\sigma}_1^2 - \tilde\sigma_2^2|^2}{\sigma_t^8}
    \\&\quad\notag
    + \frac{4 m_t^2}{\sigma_t^4} \, \E \left[ \left\| f^{(1)}(X_t) - f^{(2)}(X_t) \right\|^2 \, \Big\vert \, X_0 = x \right] ,
\end{align}
where the last inequality uses Definition \ref{def:dsm_score_class}.
Next, using the union bound, we deduce that
\begin{align*}
    &\E \left[ \left\|f^{(1)}(X_t) - f^{(2)}(X_t) \right\|^2 \, \Big\vert \, X_0 = x \right] \\
    &
    \leq D\max_{1 \leq l \leq D} \left\| f^{(1)}_l - f^{(2)}_l \right\|^2_{W^{0, \infty}([-\tilde{R}, \tilde{R}]^D)} + 4C_0^2D \, \p\left( \|X_t\|_\infty \geq \tilde{R} \,\big|\, X_0 = x \right) \\
    &
    \leq D\max_{1 \leq l \leq D} \left\|f^{(1)}_l - f^{(2)}_l \right\|^2_{W^{0, \infty}([-\tilde{R}, \tilde{R}]^D)} + 4C_0^2 D^2 \, \p\left(|Z| \geq \frac{\tilde{R} - m_t\|x\|_\infty}{\sigma_t}\right),
\end{align*}
where $Z \sim \cN(0, 1)$.
Hence, we conclude that
\begin{align*}
    &
    \E \left[ \left\|f^{(1)}(X_t) - f^{(2)}(X_t) \right\|^2 \, \Big\vert \, X_0 = x \right] \\
    &
    \leq D \max_{1 \leq l \leq D} \left\|f^{(1)}_l - f^{(2)}_l \right\|^2_{W^{0, \infty}([-\tilde{R}, \tilde{R}]^D)} + 8C_0^2 D^2 \exp\left\{-\frac{\tilde{R}^2(1 - m_t)^2}{2\sigma_t^2}\right\} .
\end{align*}
Substituting this bound into \eqref{eq:score_diff_dsm_cond_exp} we find that
\begin{align}
\label{eq:cov_dsm_score_diff_cond}
    \E \left[ \left\|s^{(1)}(X_t) - s^{(2)}(X_t) \right\|^2 \, \Big\vert \, X_0 = x \right]
    & \notag
    \leq \frac{2m_t^2 D|\tilde{\sigma}_1^2 - \tilde{\sigma}_2^2| ( \sigma_t^2 + m_t^2 \tilde{R}^2 + 2C_0^2 )}{\sigma_t^8}
    \\
    &\quad
    + \frac{32 m_t^2D^2(C_0^2 \vee 1)}{\sigma_t^4} \max_{1 \leq l \leq D} \left\|f^{(1)}_l - f^{(2)}_l \right\|^2_{W^{0, \infty}([-\tilde{R}, \tilde{R}]^D)}
    \\&\quad \notag
    + \frac{32 m_t^2D^2(C_0^2 \vee 1)}{\sigma_t^4} \exp\left\{-\frac{\tilde{R}^2(1 - m_t)^2}{2\sigma_t^2}\right\} .
\end{align}
Similarly, using Definition \ref{def:dsm_score_class}  we obtain
\begin{align}
\label{eq:cov_dsm_mult_aux}
    &\notag
    \E \left[ \left\| s^{(1)}(X_t) + s^{(2)}(X_t) - 2\nabla_{X_t}\log\sfp_t^*(X_t | X_0) \right\|^2 \, \Big\vert \, X_0 = x \right] \\
    \notag
    &
    \leq 3 \E \left[ \left\|s^{(1)}(X_t) \right\|^2 \, \Big\vert \, X_0 = x \right]
    + 3 \E \left[ \left\|s^{(2)}(X_t) \right\|^2 \, \Big\vert \, X_0 = x \right]
    \\&\quad
    + 12 \E \left[ \left\|\nabla_{X_t} \log\sfp_t^*(X_t | X_0) \right\|^2 \, \Big\vert \, X_0 = x \right] 
    \\
    \notag
    &
    \leq \frac{12 \E \left[\|X_t\|^2 \, \big\vert \, X_0 = x \right] }{\sigma_t^4} + \frac{12 m_t^2 D C_0^2}{\sigma_t^4}
    + \frac{12 \E \left[ \|X_t - m_t x\|^2 \, \big\vert \, X_0 = x \right]}{\sigma_t^4} .
\end{align}
Thus, \eqref{eq:cond_law_ou} yields
\begin{align*}
    \E \left[ \left\| s^{(1)}(X_t) + s^{(2)}(X_t) - 2\nabla_{X_t}\log\sfp_t^*(X_t | X_0) \right\|^2 \, \Big\vert \, X_0 = x \right]
    \leq \frac{12 m_t^2 D(\tilde{R}^2 + C_0^2)}{\sigma_t^4} + \frac{24 D}{\sigma_t^2} .
\end{align*}
Putting together \eqref{eq:dsm_loss_diff_aux}, \eqref{eq:cov_dsm_score_diff_cond}, and \eqref{eq:cov_dsm_mult_aux} yields that for any $[-\tilde{R}, \tilde{R}]^D$, we have
\begin{align}
\label{eq:loss_prox_dsm_aux}
    &
    \left| \ell_t\left(s^{(1)}, x \right) - \ell_t\left(s^{(2)}, x \right) \right|
    \leq \left( \frac{768 m_t^2 D^3(C_0^2 \vee 1) (\tilde{R}^2 + C_0^2 + 1)}{\sigma_t^{12}}\right)^{1/2}
    \\
    &\notag
    \cdot \left( |\tilde{\sigma}_1^2 - \tilde{\sigma}_2^2|(1 + \tilde{R}^2 + 2C_0^2) +  \max_{1 \leq l \leq D} \left\|f^{(1)}_l - f^{(2)}_l \right\|^2_{W^{0, \infty}([-\tilde{R}, \tilde{R}]^D)} + \exp\left\{-\frac{\tilde{R}^2(1 - m_t)^2}{2\sigma_t^2}\right\} \right)^{1/2} .
\end{align}

\noindent
\textbf{Step 2: covering number evaluation.}\quad
We first evaluate the covering number of the GELU neural networks class.
Using claim $(iii)$ of Lemma \ref{lem:gelu_func_div_prox} we deduce that
\begin{align}
\label{eq:nn_cov_num_comp}
    \log \cN\left(\eps, \NN(L, W, S, B), \|\cdot\|_{L^\infty([-\tilde{R}, \tilde{R}]^D)}\right)
    \lesssim SL\log\left(\eps^{-1} L (B \vee 1)(\tilde{R} \vee 1)(\|W\|_\infty + 1)\right) ,
\end{align}
for any $0 < \eps \leq \cD\left(\NN(L, W, S, B), \|\cdot\|_{L^\infty([-\tilde{R}, \tilde{R}]^D)}\right)$.
Now let $\cF_\eps$ be a minimal $\eps$-net of $\NN(L, W, S, B)$ with respect to $\|\cdot\|_{L^\infty([-\tilde{R}, \tilde{R}]^D)}$-norm.
Let also $\cG_\eps$ be a minimal $\eps$-net of $[0, 1]$ with respect to $\|\cdot\|_\infty$-norm.
The value of $\eps$ will be determined later in the proof.
Define
\begin{align*}
    \cS_\eps = \left\{s(y) = -\frac{y}{m_t^2\tilde{\sigma}^2 + \sigma_t^2} + \frac{m_t}{m_t^2\tilde{\sigma}^2 + \sigma_t^2}f(y) \, : \, \tilde\sigma \in \cG_\eps, \, f \in \cF_\eps, \, y \in \R^D \right\} .
\end{align*}
Hence, we have that
\begin{align}
\label{eq:dsm_cov_product_basic}
    \log|\cS_\eps|
    = \log|\cF_\eps| + \log|\cG_\eps|
    \leq \log |\cF_\eps| + \log(1 / \eps)  .
\end{align}
Furthermore, \eqref{eq:loss_prox_dsm_aux} suggests that taking
\begin{align*}
    \tilde{R}
    = \frac{\sigma_t^2 \sqrt{2\log(1 / \eps)}}{(1 - m_t)^2} \vee R
    = \frac{(1 + e^{-t})^2 \sqrt{2\log(1 / \eps)}}{\sigma_t^2} \vee R
\end{align*}
and
\begin{align*}
    \log(1 / \eps) \asymp \log(1 / \tau) + \log(\sigma_t^{-2}D(C_0 \vee 1)) + \log(\tilde{R} \vee 1)
\end{align*}
ensures that
\begin{align*}
    \left\| \ell(s^{(1)}, \cdot) - \ell(s^{(2)}, \cdot) \right\|_{L^\infty([-R, R]^D)} \leq \tau .
\end{align*}
Therefore, $\{\ell(s, \cdot) : s \in \cS_\eps\}$ is the desired $\tau$-net of $\cL$ with respect to $\|\cdot\|_{L^{\infty}([-R, R]^D)}$-norm.
We also find that
\begin{align*}
    \log(1 / \eps) \lesssim \log(1 / \tau) + \log(\sigma_t^{-2}D(C_0 \vee R \vee 1)) .
\end{align*}
Hence, combining results from \eqref{eq:nn_cov_num_comp} and \eqref{eq:dsm_cov_product_basic}, we conclude that
\begin{align*}
    \log|\cS_\eps|
    \lesssim SL\log(\tau^{-1}L(B \vee 1)(\|W\|_\infty + 1)\sigma_t^{-2}D(C_0 \vee R \vee 1) ) ,
\end{align*}
thereby finishing the proof.

\myendproof

\section{Elements of learning theory}
\label{sec:stat_learn_th}

Let $\xi, \xi_1, \dots, \xi_n$ be i.i.d. random elements in $\R^D$ drawn from a distribution $\sfP$ and let $\cF = \left\{ f : \R^D \rightarrow \R \right\}$ be a class of Borel functions. Following the standard terminology of the empirical processes theory, we denote
\[
    \sfP f = \E f(\xi)
    \quad \text{and} \quad
    \sfP_n f = \frac1n \sum\limits_{i = 1}^n f(\xi_i)
    \quad \text{for any $f \in \cF$.}
\]
Given some $\eps > 0$, we are interested in high-probability upper bounds on the suprema of the empirical processes
\begin{equation}
    \label{eq:offset_processes}
    \sfP f - (1 + \eps) \sfP_n f
    \quad \text{and} \quad
    \sfP_n f - (1 + \eps) \sfP f,
    \quad \text{where $f \in \cF$.}
\end{equation}
When $\cF$ is bounded with respect to the $L^\infty$-norm and satisfies the Bernstein condition, sharp high-probability bounds on the suprema of the processes \eqref{eq:offset_processes} can be obtained via local \citep{bartlett2005local} or offset (see, for instance, \citep{liang15, puchkin2023exploring} Rademacher complexities. Let us recall that a sample Rademacher complexity is defined as
\[
    \cR_n(\cF) = \E_\sigma \sup\limits_{f \in \cF} \left| \frac1n \sum\limits_{i = 1}^n \sigma_i f(\xi_i) \right|,
\]
where $\E_\sigma$ stands for the  expectation with respect to $\sigma_1, \dots, \sigma_n$ (conditionally on $\xi_1, \dots, \xi_n$). Unfortunately, this does not suit the score estimation setup where we have to deal with unbounded empirical processes. For this reason, we have to slightly extend the localization technique. Throughout this section, we assume that $\cF$ has a finite $\psi_1$-diameter and its covering number $\cN\big(\eps, \cF, L^2(\sfP_n) \big)$ grows polynomially with respect to $(1 / \eps)$\footnote{Our technique naturally extends to Donsker and even nonparametric classes $\cF$, where the metric entropy $\log\cN\big(\eps, \cF, L^2(\sfP_n) \big)$ grows at a polynomial rate. However, for our purposes it will be enough to consider classes satisfying Assumption \ref{as:covering_number}.}. 

\begin{As}
    \label{as:covering_number}
    There exist $A \geq 1$, $\zeta \geq 0$, and a random variable $D_n$ such that the covering number of $\cF$ with respect to the empirical $L^2$-norm satisfies the inequality
    \[
        \cN\big(\eps, \cF, L^2(\sfP_n) \big) \leq A \left( \frac{D_n}{\eps} \right)^\zeta
        \quad \text{for all $0 < \eps \leq \cD\big(\cF, L_2(\sfP_n) \big)$ almost surely,}
    \]
    where $\cD\big(\cF, L^2(\sfP_n) \big)$ denotes the empirical $L^2$-diameter of $\cF$:
    \[
        \cD^2\big(\cF, L^2(\sfP_n) \big) = \sup\limits_{f, g \in \cF} \left\{ \frac1n \sum\limits_{i = 1}^n \big( f(\xi_i) - g(\xi_i) \big)^2 \right\}.
    \]
\end{As}
When dealing with unbounded empirical processes, the main obstacle is to relate $L^2(\sfP)$-radius $\rho = \sup_{f \in \cF} \sqrt{\sfP f^2}$ of $\cF$ with its empirical counterpart $\rho_n = \sup_{f \in \cF} \sqrt{\sfP_n f^2}$. In the bounded case, due to the contraction principle, one simply has
\[
    \E \rho_n^2 \leq \rho^2 + 2 \, \E \cR(\cF) \cdot \sup\limits_{f \in \cF} \|f\|_{L^\infty}.
\]
In the unbounded case, we slightly modify this approach and take into account sub-exponential tails of $f(\xi_1), \dots, f(\xi_n)$. As a result, we obtain the following bound on Rademacher complexity.

\begin{Lem}
    \label{lem:exp_local_rad}
    Grant Assumption \ref{as:covering_number}. Let us fix an arbitrary $q \in (1, 2)$ and suppose that
    \[
        \E (\log D_n)^{q / (2 - q)} < +\infty.
    \]
    Then it holds that
    \[
        \left( \E \cR_n^q(\cF) \right)^{1 / q}
        \leq \frac{2 \chi(q, \rho) \, \rho}{\sqrt n} + \frac{16 \chi^2(q, \rho) (2q - 1)(1 + \log_2 n)}{(q - 1) n} \left\| \sup\limits_{f \in \cF} |f(\xi)| \right\|_{\psi_1}.
    \]
    where $\rho = \sup_{f \in \cF} \sqrt{\sfP f^2}$ and
    \[
        \chi(q, \rho)
        = 12\sqrt{\frac{\pi \zeta}2} + 12\sqrt{2 \log A} + 12\sqrt{2 \zeta} \left( \E \left( \log \frac{D_n}\rho \right)^{q / (2 - q)} \right)^{(2 - q) / (2q)}.
    \]
\end{Lem}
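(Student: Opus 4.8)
The plan is to bound the $q$-th moment of the sample Rademacher complexity by first conditioning on the data, controlling $\cR_n(\cF)$ via Dudley's entropy integral in terms of the empirical radius $\rho_n = \sup_{f\in\cF}\sqrt{\sfP_n f^2}$, and then relating $\rho_n$ to the population radius $\rho$ using the sub-exponential tails of the envelope $F(\xi) = \sup_{f\in\cF}|f(\xi)|$. First I would apply the chaining bound with the $L^2(\sfP_n)$-covering numbers from Assumption~\ref{as:covering_number}: since $\log\cN(\eps,\cF,L^2(\sfP_n))\le \log A + \zeta\log(D_n/\eps)$, Dudley's integral up to the empirical diameter (which is at most $2\rho_n$) yields, conditionally on $\xi_1,\dots,\xi_n$,
\[
    \cR_n(\cF) \lesssim \frac{\rho_n}{\sqrt n}\left(\sqrt{\zeta} + \sqrt{\log A} + \sqrt{\zeta}\,\sqrt{\log(D_n/\rho_n)}\,\right),
\]
with explicit absolute constants (the $12$'s and $\sqrt{\pi/2}$ in $\chi$ should fall out of a careful Dudley computation). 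The logarithm $\log(D_n/\rho_n)$ is the annoying term because $\rho_n$ is random, but since $\log(D_n/\rho_n) \le \log(D_n/\rho) + \log(\rho/\rho_n)$ and the chaining bound is monotone, one can afford to replace $\rho_n$ by $\rho$ inside the logarithm at the cost of a term that is absorbed when $\rho_n \ge \rho$ does not hold — more simply, use that the map $r\mapsto r\sqrt{\log(D_n/r)}$ is increasing for $r$ up to $D_n/\sqrt e$, so it suffices to bound $\rho_n$ from above.

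Next I would pass from $\rho_n$ to $\rho$ in expectation. By the symmetrization and contraction principle for the squared class (each $f^2$ is $2\,\cD(\cF,L^\infty)$-Lipschitz is not available in the unbounded case, so instead) one uses the standard bound
\[
    \E\,\rho_n^2 \le \rho^2 + 2\,\E\Big[\cR_n(\cF)\cdot\big\|\sup_{f\in\cF}|f(\xi)|\big\|_{\psi_1}\Big]
\]
obtained by bounding $\sup_f|\tfrac1n\sum_i\sigma_i f(\xi_i)^2|$ via the contraction inequality applied on the event-wise Lipschitz constant $2F_n := 2\max_i F(\xi_i)$ of $t\mapsto t^2$ on the relevant range, then replacing $F_n$ by its $\psi_1$-norm up to the logarithmic factor $(1+\log_2 n)$ that appears in the statement (this is where the $(2q-1)/(q-1)$ and the $\log_2 n$ factor come from — it is the price of the maximal inequality $\E\max_{i\le n} F(\xi_i) \lesssim \log n\cdot\|F\|_{\psi_1}$ together with a self-bounding/fixed-point argument on $\E\rho_n^2$). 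Solving the resulting quadratic-type inequality in $\rho_n$ gives $\E\rho_n^2 \lesssim \rho^2 + (\text{log factor}/n)^2\|F\|_{\psi_1}^2$, hence $\E\rho_n \lesssim \rho + (\text{log factor}/n)\|F\|_{\psi_1}$.

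Finally I would combine the two displays: raise the conditional chaining bound to the $q$-th power, take expectations, use Jensen/Hölder to separate $\E[\rho_n^q(\log(D_n/\rho))^{q/2}]$ into $(\E\rho_n^2)^{q/2}$ times $(\E(\log(D_n/\rho))^{q/(2-q)})^{(2-q)/2}$ — this is exactly the conjugate-exponent split that produces the exponent $q/(2-q)$ in the hypothesis and the exponent $(2-q)/(2q)$ in $\chi(q,\rho)$ — and then substitute the bound on $\E\rho_n$. The two resulting terms are precisely $2\chi(q,\rho)\rho/\sqrt n$ and the $\|F\|_{\psi_1}$ term with the stated constants. \textbf{The main obstacle} is the unbounded contraction/self-bounding step: getting from $\rho_n$ back to $\rho$ requires a maximal inequality for $\max_i F(\xi_i)$ in $\psi_1$-norm and then resolving the implicit inequality $\E\rho_n^2 \le \rho^2 + C\,\E[\cR_n(\cF)]\|F\|_{\psi_1}$ where $\E[\cR_n(\cF)]$ itself depends on $\E\rho_n$; tracking the constants through this fixed-point argument so that they match $\chi(q,\rho)$ and the factor $16\chi^2(q,\rho)(2q-1)(1+\log_2 n)/((q-1)n)$ is the delicate part. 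Everything else is a fairly routine Dudley-plus-Hölder computation.
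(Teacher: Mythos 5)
Your plan follows essentially the same route as the paper's proof: Dudley chaining with the empirical covering numbers, the Talagrand contraction principle with the random Lipschitz constant $2\max_i\sup_f|f(\xi_i)|$ plus a $\psi_1$ maximal inequality (giving the $(2q-1)/(q-1)$ and $(1+\log_2 n)$ factors), a H\"older split with conjugate exponents $2/q$ and $2/(2-q)$ producing the $q/(2-q)$ moment of $\log D_n$, and a final self-bounding resolution of the implicit inequality. The only cosmetic difference is that the paper dispenses with your monotonicity trick for $\log(D_n/\rho_n)$ by simply extending the entropy integral up to $\rho_n\vee\rho$, so the logarithm is bounded by $\log(D_n/\rho)$ directly, and it solves the fixed point in $(\E\cR_n^q)^{1/q}$ rather than in $\E\rho_n^2$ — both immaterial to correctness.
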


We postpone the proof of Lemma \ref{lem:exp_local_rad} to Appendix \ref{sec:lem_exp_local_rad_proof} and proceed with a high-probability upper bound on the suprema of the offset processes \eqref{eq:offset_processes}.

\begin{Th}
\label{th:tail_ineq_unb_new}
    Let $\xi, \xi_1, \dots, \xi_n$ be i.i.d. random elements in $\R^D$. Let $\cF$ be a class of measurable functions $f : \R^D \to \R$ with a finite $\psi_1$-diameter and denote
    \[
        \Psi = \left\| \sup\limits_{f \in \cF} |f(\xi)| \right\|_{\psi_1}.
    \]
    Grant Assumption \ref{as:covering_number} and suppose that there exist $\varkappa \in (0, 1]$ and $B \geq 1$ such that $\sfP f^2 \leq B (\sfP f)^\varkappa$ for all $f \in \cF$.
    Then, for any $\delta \in (0, 1)$ and $\eps > 0$, with probability at least $1 - \delta$, simultaneously for all $f \in \cF$, we have
    \begin{align*}
        &
        \max\big\{ \sfP_n f - (1 + \eps) \sfP f, \sfP f - (1 + \eps) \sfP_n f \big\}
        \\&
        \lesssim \left( \frac{(1 + \eps)^2 B}{\eps^\varkappa} \Upsilon(n, \delta) \right)^{1 / (2 - \varkappa)} + (1 + \eps) (\Psi \vee 1) \Upsilon(n, \delta) \log n, 
    \end{align*}
    where
    \begin{equation}
        \label{eq:ups}
        \Upsilon(n, \delta) = \frac1n \left( \log A + \zeta \log n + \zeta \left( \E \log^3 D_n \right)^{1/3} + \log(e / \delta) \right),
    \end{equation}
    and $\lesssim$ stands for the inequality up to an absolute constant.
\end{Th}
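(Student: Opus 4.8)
The plan is to run the standard peeling/localization argument for empirical processes, but with the Rademacher complexity bound replaced by the sub-exponential version from Lemma \ref{lem:exp_local_rad}. First I would reduce both offset processes to a single one: it suffices to bound $\sup_{f\in\cF}\big(\sfP f-(1+\eps)\sfP_n f\big)$ and $\sup_{f\in\cF}\big(\sfP_n f-(1+\eps)\sfP f\big)$ separately, and by symmetrizing (replacing $f$ with $-f$, which keeps $\cF$ inside a class with the same $\psi_1$-diameter and covering numbers up to constants) the two are handled identically; alternatively one can just treat $\sup_f |\sfP_n f - \sfP f|$ localized at a level $r$. The core step is a \emph{peeling} decomposition over the ``radius'' of $f$ measured by $\sfP f$ (or, equivalently, by $\sqrt{\sfP f^2}$, which by the Bernstein condition $\sfP f^2 \le B(\sfP f)^\varkappa$ is comparable): partition $\cF$ into shells $\cF_j = \{f\in\cF: 2^{j-1} r_0 < \sfP f \le 2^j r_0\}$ for a base level $r_0$ to be chosen, plus the innermost ball $\{\sfP f \le r_0\}$.

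On each shell $\cF_j$ I would apply a one-sided Talagrand/Bousquet concentration inequality for the supremum of the empirical process $\sup_{f\in\cF_j}(\sfP f - \sfP_n f)$. Since $\cF$ is unbounded, I would use the version for classes with sub-exponential envelopes — controlling the expectation by symmetrization (so by $2\,\E\cR_n(\cF_j)$) and the fluctuations by the $\psi_1$-norm of $\sup_{f\in\cF_j}|f(\xi)|$, which is at most $\Psi$; this yields, with probability $1-\delta_j$,
\begin{align*}
    \sup_{f\in\cF_j}\big(\sfP f - \sfP_n f\big)
    \lesssim \E\cR_n(\cF_j) + \sqrt{\frac{\rho_j^2 \log(1/\delta_j)}{n}} + \frac{\Psi\,(\log n)\,\log(1/\delta_j)}{n},
\end{align*}
where $\rho_j^2 = \sup_{f\in\cF_j}\sfP f^2 \le B (2^j r_0)^\varkappa$. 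For $\E\cR_n(\cF_j)$ I plug in Lemma \ref{lem:exp_local_rad} with, say, $q=3/2$: this gives $\E\cR_n(\cF_j) \lesssim \chi(q,\rho_j)\rho_j/\sqrt n + \chi^2(q,\rho_j)(\log n)\Psi/n$, and $\chi(q,\rho_j)^2 \lesssim \zeta + \log A + \zeta(\E\log^3 D_n)^{1/3}$ up to logarithmic-in-$n$ and $\log(1/\rho_j)$ factors, which are absorbed into $\log n$ when $r_0$ is chosen polynomially small in $n$. Collecting constants, on each shell the fluctuation is of order $\sqrt{\rho_j^2\,\Upsilon(n,\delta_j)} + \Psi(\log n)\Upsilon(n,\delta_j)$ with $\Upsilon$ as in \eqref{eq:ups}.

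Finally I would choose $\delta_j = \delta\, 2^{-j}/c$ so that $\sum_j \delta_j \le \delta$ (the $\log(1/\delta_j)$ factors only cost an extra $\log j \lesssim \log\log n$, harmless), take a union bound over $j$, and compare the shell-$j$ fluctuation $\sqrt{B(2^j r_0)^\varkappa\,\Upsilon}$ against the ``budget'' $\eps\, 2^{j-1} r_0$ allotted by the offset term $(1+\eps)\sfP_n f \ge (1+\eps)\sfP f - (1+\eps)\cdot(\text{fluctuation})$: on shell $j$ one has $\sfP f - (1+\eps)\sfP_n f \le (1+\eps)\cdot(\text{fluctuation}) - \eps\,\sfP f$, and the fluctuation $\sqrt{B(2^j r_0)^\varkappa\Upsilon}$ is dominated by $\eps\,2^{j-1}r_0$ precisely once $2^j r_0 \gtrsim (B\Upsilon/\eps^2)^{1/(2-\varkappa)}$, i.e. for all but the first few shells; the finitely many small shells are all contained in $\{\sfP f \le r_0^\star\}$ with $r_0^\star \asymp ((1+\eps)^2 B\,\Upsilon/\eps^\varkappa)^{1/(2-\varkappa)}$, and there the offset term itself makes $\sfP f - (1+\eps)\sfP_n f \le \sfP f \le r_0^\star$. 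Adding the additive $\Psi$-type term $(1+\eps)(\Psi\vee1)\Upsilon(n,\delta)\log n$ coming from the heavy-tail part of the concentration inequality on the relevant shells gives exactly the claimed bound; the other direction ($\sfP_n f - (1+\eps)\sfP f$) is symmetric.

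The main obstacle I anticipate is the bookkeeping in the peeling argument when the envelope is only sub-exponential: one must make sure that the $\psi_1$-tail contributions, summed over the (logarithmically many relevant) shells and over the union bound, do not accumulate more than a single $\log n$ factor, and that the $\log D_n$ terms inside $\chi(q,\rho_j)$ — which depend on the shell through $\rho_j$ — are uniformly controlled; this is where the choice of a polynomially small base level $r_0$ and the moment assumption $\E\log^3 D_n < \infty$ are used, and where the bound $(\E\log^3 D_n)^{1/3}$ in \eqref{eq:ups} (rather than the $(\E(\log D_n)^{q/(2-q)})^{(2-q)/q}$ appearing in Lemma \ref{lem:exp_local_rad}, which for $q=3/2$ is exactly $(\E\log^3 D_n)^{1/3}$) comes from. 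The rest is a routine, if lengthy, matching of fluctuation terms against the offset budget.
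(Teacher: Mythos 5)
Your plan is essentially the paper's argument: peeling on the level of $\sfP f$, the sub-exponential Rademacher bound of Lemma \ref{lem:exp_local_rad} with $q=3/2$, an Adamczak-type concentration inequality for suprema of unbounded processes, the Bernstein condition to convert localized variances into powers of the radius, and a fixed-point choice of the base level, which reproduces the exponent $1/(2-\varkappa)$ and the $\Psi$-term. The organizational difference is that you apply the concentration inequality on each shell separately and pay a union bound with $\delta_j=\delta 2^{-j}/c$, whereas the paper applies it once to the reweighted class $\{4^{-\sfk(f)}f : f\in\cF\}$, whose expected supremum is bounded by the peeling sum of local Rademacher complexities and whose variances are uniformly at most $Br^\varkappa$; this removes all shell-dependent bookkeeping. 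Your version also works, but note that $\log(1/\delta_j)$ grows linearly in $j$, not like $\log j\lesssim\log\log n$ as you claim; it is still absorbed only because the budget $\eps\,2^{j-1}r_0$ grows geometrically, so say that explicitly.

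One step fails as written: on the small shells you argue $\sfP f-(1+\eps)\sfP_n f\le \sfP f\le r_0^\star$, which silently uses $\sfP_n f\ge 0$. The functions in $\cF$ (in the application, excess losses) are not nonnegative; the Bernstein condition only forces $\sfP f\ge 0$, and $\sfP_n f$ may be negative on the event you work on. The repair is already in your toolkit: on the innermost ball and the first few shells use the same concentration bound, giving
\begin{align*}
    \sfP f-(1+\eps)\sfP_n f
    \le (1+\eps)\sup_{g}\left|\sfP g-\sfP_n g\right|
    \lesssim (1+\eps)\sqrt{B\,(r_0^\star)^\varkappa\,\Upsilon(n,\delta)}+(1+\eps)(\Psi\vee 1)\,\Upsilon(n,\delta)\log n ,
\end{align*}
and by the defining property of $r_0^\star$ the first term is of order $\eps\,r_0^\star\asymp\big((1+\eps)^2B\,\Upsilon(n,\delta)/\eps^\varkappa\big)^{1/(2-\varkappa)}$, i.e.\ exactly the claimed bound; this is how the paper handles its $\sfk(f)=0$ case. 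Relatedly, the aside about replacing $f$ by $-f$ should be dropped (the Bernstein condition is not stable under this map since $\sfP(-f)\le 0$); both one-sided offset bounds should be derived, as you also suggest, from the two-sided fluctuation together with $\sfP f\ge 0$. With these corrections your proof goes through and matches the paper's.
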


The proof of Theorem \ref{th:tail_ineq_unb_new} mostly repeats the standard localization argument, except for the fact that we use Lemma \ref{lem:exp_local_rad} to bound local Rademacher complexities. We provide rigorous derivations in Appendix \ref{sec:th_tail_ineq_unb_new_proof}. Theorem \ref{th:tail_ineq_unb_new} yields the following upper bound on the generalization error of an empirical risk minimizer.

\begin{Rem}
    \label{rem:erm_localization_bound}
    Theorem \ref{th:tail_ineq_unb_new} yields an upper bound on the performance of an empirical risk minimizer. Indeed, assume the conditions of Theorem \ref{th:tail_ineq_unb_new} and let
    \[
        \hat f \in \argmin\limits_{f \in \cF} \sfP_n f,
        \quad
        f^\circ \in \argmin\limits_{f \in \cF} \sfP f.
    \]
    Then, for any $\eps > 0$ and $\delta \in (0, 1)$, with probability at least $(1 - \delta)$, it holds that
    \begin{align*}
        \sfP \hat f - (1 + \eps)^2 \sfP f^\circ
        &
        \leq \left( \sfP \hat f - (1 + \eps) \sfP_n \hat f \right) + (1 + \eps) \big( \sfP_n f^\circ - (1 + \eps) \sfP f^\circ \big)
        \\&
        \lesssim \left( \frac{(1 + \eps)^2 B}{\eps^\varkappa} \, \Upsilon(n, \delta) \right)^{1 / (2 - \varkappa)} + (1 + \eps) (\Psi \vee 1) \Upsilon(n, \delta) \log n,
    \end{align*}
    where $\Upsilon(n, \delta)$ is defined in \eqref{eq:ups} and $\lesssim$ stands for the inequality up to an absolute constant.
\end{Rem}

\subsection{Proof of Lemma \ref{lem:exp_local_rad}}
\label{sec:lem_exp_local_rad_proof}

Let us introduce an empirical counterpart of $\rho$:
\[
    \rho_n = \sup\limits_{f \in \cF} \sqrt{\sfP_n f^2}.
\]
Using the standard chaining technique \cite[Lemma A.3]{srebro2010smoothness}, we obtain that
\[
    \cR_n(\cF)
    \leq \frac{12}{\sqrt n} \int\limits_0^{\rho_n} \sqrt{\log \cN\big( \eps, \cF, L^2(\sfP_n) \big)} \, \dd \eps
    \leq \frac{12}{\sqrt n} \int\limits_0^{\rho_n \vee \rho} \sqrt{\log A + \zeta \log \frac{D_n}\eps } \, \dd \eps.
\]
Making a substitution $\eps = (\rho_n \vee \rho) e^{-u}$, $u \in (0, +\infty)$, we deduce that
\begin{align*}
    \cR_n(\cF)
    &
    \leq \frac{12 (\rho_n \vee \rho)}{\sqrt n} \int\limits_0^{+\infty} \sqrt{\log A + \zeta \log \frac{B_n}{\rho_n \vee \rho} + \zeta u} \; e^{-u} \dd u
    \\&
    \leq \frac{12 (\rho_n \vee \rho)}{\sqrt n} \int\limits_0^{+\infty} \left( \sqrt{\log A + \zeta \log \frac{B_n}{\rho_n \vee \rho}} + \sqrt{\zeta u} \right) \, e^{-u} \dd u
    \\&
    \leq \frac{12 (\rho_n \vee \rho)}{\sqrt n} \left( \sqrt{\log A} + \sqrt{\zeta \log \frac{B_n}{\rho}} +  \frac{\sqrt{\pi \zeta}}2 \right).
\end{align*}
In the last line, we used the fact that
\[
    \int\limits_0^{+\infty} \sqrt{u} \, e^{-u} \dd u
    = \Gamma\left( \frac32 \right)
    = \frac{\sqrt{\pi}}2.
\]
Due to the triangle inequality and the symmetrization trick, it holds that
\[
    \E \rho_n^2
    \leq \rho^2 + \E \sup\limits_{f \in \cF} \left( \sfP_n f^2 -\sfP f^2 \right)
    \leq \rho^2 + 2 \E_\xi \E_\sigma \sup\limits_{f \in \cF}\left| \frac1n \sum\limits_{i = 1}^n \sigma_i f^2(\xi_i) \right|,
\]
where $\sigma_1, \dots, \sigma_n$ are i.i.d. Rademacher random variables, which are independent of $\xi_1, \dots, \xi_n$.
Let us note that for each $R > 0$ the map $x \mapsto x^2$ is $2R$-Lipschitz on $[-R, R]$. In view of the Talagrand contraction principle \cite[Theorem 4.12]{ledoux2013probability}, we have
\[
    \E_\sigma \sup\limits_{f \in \cF} \left|\frac1n \sum\limits_{i = 1}^n \sigma_i f^2(\xi_i)\right|
    \leq 4 \max\limits_{1 \leq i \leq n} \sup\limits_{f \in \cF} |f(\xi_i)| \, \cR_n(\cF).
\]
Thus, applying H\"older's inequality, we obtain that
\[
    \E \rho_n^2
    \leq \rho^2 + 8 \left( \E \max\limits_{1 \leq i \leq n} \sup\limits_{f \in \cF} |f(\xi_i)|^{q / (q - 1)} \right)^{(q - 1) / q} \left( \E \cR_n^q(\cF) \right)^{1 / q}.
\]
Let us note that, due to Lemma \ref{lem:max_psi_norm} and \cite[Lemma F.7]{puchkin25}, it holds that
\begin{align*}
    \left( \E \max\limits_{1 \leq i \leq n} \sup\limits_{f \in \cF} |f(\xi_i)|^{q / (q - 1)} \right)^{(q - 1) / q}
    &
    \leq \frac{2^{(q - 1) / q}  (2q - 1)}{q - 1}  \left\| \max\limits_{1 \leq i \leq n} \sup\limits_{f \in \cF} |f(\xi_i)| \right\|_{\psi_1}
    \\&
    \leq \frac{2 \Psi (2q - 1)(1 + \log_2 n)}{q - 1},
\end{align*}
where we introduced
\[
    \Psi = \left\| \sup\limits_{f \in \cF} |f(\xi)| \right\|_{\psi_1}.
\]
Substituting this bound into the previous inequality, we obtain that
\[
    \E \rho_n^2
    \leq \rho^2 + \frac{16 \Psi (2q - 1)(1 + \log_2 n)}{q - 1} \left( \E \cR_n^q(\cF) \right)^{1 / q},
\]
On the other hand, according to H\"older's inequality, we have
\begin{align*}
    \left( \E \cR_n^q(\cF) \right)^{1 / q}
    &
    \leq \frac{12}{\sqrt n} \left( \E (\rho_n^q \vee \rho^q) \left( \frac{\sqrt{\pi \zeta}}2 + \sqrt{\log A + \zeta \log \frac{D_n}\rho} \right)^{q} \right)^{1 / q}
    \\&
    \leq \frac{12}{\sqrt n} \sqrt{ \E (\rho_n^2 \vee \rho^2) } \left( \E \left( \frac{\sqrt{\pi \zeta}}2 + \sqrt{\log A + \zeta \log \frac{D_n}\rho} \right)^{2q / (2 - q)} \right)^{(2 - q) / (2q)}.
\end{align*}
This yields that
\begin{align*}
    \left( \E \cR_n^q(\cF) \right)^{1 / q}
    &
    \leq \frac{12 \sqrt{2}}{\sqrt n} \left( \rho^2 + \frac{16 \Psi (2q - 1)(1 + \log_2 n)}{q - 1} \left( \E \cR_n^q(\cF) \right)^{1 / q} \right)^{1 / 2}
    \\&\quad
    \cdot \left[ \frac{\sqrt{\pi \zeta}}2 + \sqrt{\log A} + \sqrt{\zeta} \left( \E \left( \log \frac{D_n}\rho \right)^{q / (2 - q)} \right)^{(2 - q) / (2q)} \right]
    \\&
    \leq \frac{\chi(q, \rho)}{\sqrt n} \left( \rho + \sqrt{\frac{16 \Psi (2q - 1)(1 + \log_2 n)}{q - 1} \left( \E \cR_n^q(\cF) \right)^{1 / q}} \right),
\end{align*}
where
\[
    \chi(q, \rho)
    = 12\sqrt{\frac{\pi \zeta}2} + 12\sqrt{2 \log A} + 12\sqrt{2 \zeta} \left( \E \left( \log \frac{D_n}\rho \right)^{q / (2 - q)} \right)^{(2 - q) / (2q)}.
\]
Hence, we showed that
\begin{align*}
    \left( \E \cR_n^q(\cF) \right)^{1 / q}
    \leq \frac{\chi(q, \rho) \, \rho}{\sqrt n} + 4 \chi(q, \rho) \sqrt{\frac{\Psi (2q - 1)(1 + \log_2 n)}{(q - 1) n} \left( \E \cR_n^q(\cF) \right)^{1 / q}}.
\end{align*}
It only remains to note that the inequality $x \leq a + b\sqrt{x}$ yields $x \leq b^2 + 2a$. This allows us to conclude that
\[
    \left( \E \cR_n^q(\cF) \right)^{1 / q}
    \leq \frac{2 \chi(q, \rho) \, \rho}{\sqrt n} + \frac{16 \chi^2(q, \rho) (2q - 1)(1 + \log_2 n) \, \Psi}{(q - 1) n}.
\]

\subsection{Proof of Theorem \ref{th:tail_ineq_unb_new}}
\label{sec:th_tail_ineq_unb_new_proof}

Similarly to the standard localization argument (see, for instance, \citep{bartlett2005local}), the proof of Theorem \ref{th:tail_ineq_unb_new} uses reweighting and peeling techniques. For any $f \in \cF$ and $r > 0$ we define
\[
    \sfk(f) = \min\left\{ k \in \Z_+ : \sfP f \leq 4^k r \right\}
    \quad \text{and} \quad
    \cF_r = \left\{ f \in \cF : \sfP f \leq r \right\} .
\]
Then the triangle inequality yields that
\begin{align*}
    \E \sup\limits_{f \in \cF} \left\{ 4^{-\sfk(f)} \left| \sfP f - \sfP_n f \right| \right\}
    &
    \leq \E \sup\limits_{f \in \cF_r} \left| \sfP f - \sfP_n f \right|
    + \sum\limits_{k = 1}^\infty 4^{-k} \; \E \sup\limits_{f \in \cF_{4^k r} \backslash \cF_{4^{k - 1} r}} \left| \sfP f - \sfP_n f \right|
    \\&
    \leq \sum\limits_{k = 0}^\infty 4^{-k} \; \E \sup\limits_{f \in \cF_{4^k r}} \left| \sfP f - \sfP_n f \right|.
\end{align*}
According to the standard symmetrization argument the summands in the right-hand side are bounded by the corresponding double Rademacher complexities:
\begin{align}
\label{eq:exp_scaled_sup_emp_proc}
    \E \sup\limits_{f \in \cF} \left\{ 4^{-\sfk(f)} \left| \sfP f - \sfP_n f \right| \right\}
    \leq \sum\limits_{k = 0}^\infty 4^{-k} \; \E \sup\limits_{f \in \cF_{4^k r}} \left| \sfP f - \sfP_n f \right|
    \leq 2 \sum\limits_{k = 0}^\infty 4^{-k} \; \E \cR_n \left( \cF_{4^k r} \right).
\end{align}
Applying Lemma \ref{lem:exp_local_rad} with $q = 3/2$, we deduce that
\begin{align}
\label{eq:exp_Rn_local}
    \notag
    \E \cR_n \left( \cF_{4^k r} \right)
    \leq \left( \E \cR_n^{3/2} \left( \cF_{4^k r} \right) \right)^{2/3}
    &
    \lesssim \frac{\rho_k}{\sqrt n} \left( \sqrt{\log A} + \sqrt{\zeta} + \sqrt{\zeta \log(1 / \rho_k)} + \left( \E \log^3 D_n \right)^{1/6} \right)
    \\&\quad
    +\frac{\Psi \log n}n \left( \log A + \zeta + \zeta \log(1 / \rho_k) + \left( \E \log^3 D_n \right)^{1/3} \right),
\end{align}
where $\rho_k = \sup_{f \in \cF_{4^k r}} \sqrt{\sfP f^2}$.
Next, using the Bernstein condition, that is, $\sfP f^2 \leq B(\sfP f)^\varkappa$ for every $f \in \cF$, we obtain that
\begin{align}
\label{eq:rho_k_bound}
    \rho_k \leq \sup\limits_{f \in \cF_{4^k r}} \sqrt{B} \left( \sfP f \right)^{\varkappa / 2}
    \leq 2^{\varkappa k} r^{\varkappa / 2} \sqrt{B}.
\end{align}
Introducing
\[
    \Phi_n(r, \delta) = \frac1n \left(\log A + \zeta + \zeta \log(1 / r) + \zeta \left( \E \log^3 D_n \right)^{1/3} + \log(1 / \delta) \right)
\]
and summing up the inequalities\eqref{eq:exp_scaled_sup_emp_proc}, \eqref{eq:exp_Rn_local}, and \eqref{eq:rho_k_bound}, we conclude that
\begin{align*}
    \E \sup\limits_{f \in \cF} \left\{ 4^{-\sfk(f)} \left| \sfP f - \sfP_n f \right| \right\}
    &
    \lesssim \sum\limits_{k = 0}^\infty \left( \frac{2^{\varkappa k}}{4^k} \sqrt{B r^\varkappa \Phi_n(r, 1)} + 4^{-k} \Psi \Phi_n(r, 1) \log n \right)
    \\&
    \lesssim \sqrt{B r^\varkappa \Phi_n(r, 1)} + \Psi \Phi_n(r, 1) \log n
\end{align*}
Note that for all $f \in \cF$ it holds that $\Var[4^{-\sfk(f)}f] \leq Br^\varkappa$.
Indeed, due to the conditions of the theorem, we have
\[
    \Var\left[4^{-\sfk(f)} f \right] \leq 8^{-\sfk(f)}\Var[f] \leq 8^{-\sfk(f)}B (\sfP f)^\varkappa
    \leq 8^{-\sfk(f)} B(4^{\sfk(f)} r)^\varkappa \leq B r^\varkappa.
\]
According to the concentration inequality for suprema of unbounded empirical processes \citep{adamczak2008tail}, there exists a universal constant $C > 0$ and an event $\cE$ of probability measure at least $(1 - \delta)$ such that
\[
    \sup\limits_{f \in \cF} \left\{ 4^{\sfk(f)} \left| \sfP f - \sfP_n f \right| \right\}
    \leq C \sqrt{B r^\varkappa \Phi_n(r, \delta)} + C \Psi \Phi_n(r, \delta) \log n
    \quad \text{on $\cE$.}
\]
From now on, we restrict our attention on the event $\cE$. Let us fix an arbitrary $f \in \cF$. There are two scenarios: either $\sfk(f) = 0$ or $\sfk(f) > 0$.
If $\sfk(f) = 0$, then
\begin{align}
    \label{eq:k_f_zero_conc}
    \left| \sfP f - \sfP_n f \right| \leq C \sqrt{B r^\varkappa \Phi_n(r, \delta)} + C \Psi \Phi_n(r, \delta) \log n \quad \text{on $\cE$.}
\end{align}
Otherwise, it holds that $4^{-\sfk(f)} \sfP f \geq r / 4$ and, therefore, the following two inequalities hold on $\cE$:
\begin{align}
\label{eq:pf_1peps_pn}
    \notag
    \sfP f - (1 + \eps) \sfP_n f
    &
    \leq -\eps \, \sfP f + 4^{\sfk(f)} (1 + \eps) C \left( \sqrt{B r^\varkappa \Phi_n(r, \delta)} + \Psi \Phi_n(r, \delta) \log n \right)
    \\&
    \leq 4^{\sfk(f)}  \left( -\frac{\eps r}4 + (1 + \eps) C \sqrt{B r^\varkappa \Phi_n(r, \delta)} + (1 + \eps) C \Psi \Phi_n(r, \delta) \log n \right)
\end{align}
and
\begin{align}
\label{eq:pn_1peps_p}
    \notag
    \sfP_n f - (1 + \eps) \sfP f
    &
    \leq -\eps \, \sfP f + 4^{\sfk(f)} C \left( \sqrt{B r^\varkappa \Phi_n(r, \delta)} + \Psi \Phi_n(r, \delta) \log n \right)
    \\&
    \leq 4^{\sfk(f)}  \left( -\frac{\eps r}4 + C \sqrt{B r^\varkappa \Phi_n(r, \delta)} + C \Psi \Phi_n(r, \delta) \log n \right).
\end{align}
Let us choose the smallest $r > 0$ satisfying the condition
\[
    \frac{\eps r}4 \geq (1 + \eps) C \sqrt{B r^\varkappa \Phi_n(r, \delta)} + (1 + \eps) C \Psi \Phi_n(r, \delta) \log n.
\]
In view of \citep[Lemma E.9]{puchkin25}, such $r$ fulfills
\[
    r \lesssim \left( \frac{(1 + \eps)^2 B C^2}{\eps^2} \Upsilon(n, \delta) \right)^{1 / (2 - \varkappa)} \vee \frac{(1 + \eps) C (\Psi \vee 1) \Upsilon(n, \delta) \log n}\eps,
\]
where we introduced
\[
    \Upsilon(n, \delta) = \frac1n \left( \log A + \zeta \log n + \zeta \left( \E \log^3 D_n \right)^{1/3} + \log(1 / \delta) \right).
\]
Hence, due to the inequalities \eqref{eq:k_f_zero_conc}, \eqref{eq:pf_1peps_pn}, and \eqref{eq:pn_1peps_p}, on the event $\cE$ any function $f \in \cF$ satisfies either
\[
    \left| \sfP f - \sfP_n f \right|
    \leq C \sqrt{B r^\varkappa \Phi_n(r, \delta)} + C \Psi \Phi_n(r, \delta) \log n
    \leq \frac{\eps r}{4 (1 + \eps)}
\]
or
\[
    \max\left\{ \sfP_n f - (1 + \eps) \sfP f, \sfP f - (1 + \eps) \sfP_n f \right\} \leq 0.
\]
This immediately implies that with probability at least $(1 - \delta)$ simultaneously for all $f \in \cF$, it holds that
\begin{align*}
    &
    \max\left\{ \sfP_n f - (1 + \eps) \sfP f, \sfP f - (1 + \eps) \sfP_n f \right\}
    \lesssim \eps r
    \\&
    \lesssim \left( \frac{(1 + \eps)^2 B}{\eps^\varkappa} \, \Upsilon(n, \delta) \right)^{1 / (2 - \varkappa)} + (1 + \eps) (\Psi \vee 1) \Upsilon(n, \delta) \log n.
\end{align*}
\myendproof

\section{Auxiliary results}

\begin{Th}[approximation of the true score function (\cite{yakovlev2025simultaneous}, Theorem 3.2)]
\label{thm:approx_main}
    Grant Assumption \ref{asn:relax_man}.
    Also assume that $\eps \in (0, 1)$ is sufficiently small in the sense that it satisfies
    \begin{align*}
        \eps^\beta
        \leq \frac{\floor{\beta}!}{H d^\floor{\beta} \sqrt{D}}\left(1 \wedge \frac{C_1 \sigma^2}{\sqrt{D}m^2(\log(1 / \eps) + \log(mD\sigma^{-2}))} \right)
    \end{align*}
    and
    \begin{align*}
        (H \vee 1)^2 P(d, \beta)^2 D m^2(\log(1 / \eps) + \log(mD\sigma^{-2}))\eps \leq C_2 \sigma^2 ,
    \end{align*}
    where $C_1$ and $C_2$ are absolute positive constants.
    Then for any $m \in \N$ there exists a score function approximation $\bar{s} \in \cS(L, W, S, B)$ of the form
    \begin{align*}
        \bar{s}(y) = -\frac{y}{\sigma^2} + \frac{\bar{f}(y)}{\sigma^2}, \quad y \in \R^D,
    \end{align*}
    which satisfies
    \begin{align*}
        (i) &\quad
        \max_{1 \leq l \leq D}\max_{\bk \in \Z_+^D, \, |\bk| \leq m} \|\partial^\bk[\bar{s}_l - s^*_l] \|^2_{L^2(\sfp^*)}
        \lesssim \sigma^{-4|\bk| - 8} e^{\cO(|\bk|\log |\bk|)} D^2\eps^{2\beta} \log^2(1 / \eps) \log^2(mD\sigma^{-2}), \\
        (ii) &\quad \max_{1 \leq l \leq D} |\bar{f}_l|_{W^{k, \infty}(\R^D)} \leq \sigma^{-2k}\exp\{\cO( k^2\log(mD\log(1 / \eps)\log(\sigma^{-2})) )\}, \quad \text{for all } 0 \leq k \leq m .
    \end{align*}
    Moreover, $\bar{f}$ has the following configuration:
    \begin{align*}
        &L \lesssim \log(mD\sigma^{-2}\log(1 / \eps)),
        \quad \log B \lesssim m^{85}D^8 \log^{26}(mD\sigma^{-2})\log^{21}(1 / \eps), \\
        &\|W\|_\infty \vee S
        \lesssim \eps^{-d} D^{16 + P(d, \beta)} m^{132 + 17 P(d, \beta)}\sigma^{-48 - 4P(d, \beta)} \left( \log(mD\sigma^{-2}) \log(1 / \eps) \right)^{38 + 4 P(d, \beta)},
    \end{align*}
    where $P(d, \beta) = \binom{d + \floor{\beta}}{d}$.
\end{Th}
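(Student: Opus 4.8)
This statement is quoted from \cite[Theorem~3.2]{yakovlev2025simultaneous}, so in the present paper it carries no new proof; what follows only outlines the strategy behind it. The plan is to build the approximant $\bar s$ by mimicking, block by block, the closed form \eqref{eq:s_star_f_star_def} of $s^*$, and then to propagate the error through each block using that $\gelu$ is $C^\infty$ with all derivatives of at most polynomial growth, together with the simultaneous-derivative approximation toolkit for $\gelu$-networks. The linear term $-y/\sigma^2$ of $s^*$ is represented exactly by one affine layer, so everything reduces to approximating $f^*(y)=\big(\int_{[0,1]^d}g^*(u)K_\sigma(y,u)\,\dd u\big)/\big(\int_{[0,1]^d}K_\sigma(y,u)\,\dd u\big)$, where $K_\sigma(y,u)=\exp\{-\|y-g^*(u)\|^2/(2\sigma^2)\}$.

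First I would replace $g^*$ by a $\gelu$-network $\tilde g$ obtained from a smooth partition of unity on $[0,1]^d$ multiplied by local Taylor polynomials of degree $\floor{\beta}$; since $g^*\in\cH^\beta([0,1]^d,\R^D,H)$, this attains $\|\tilde g-g^*\|_{C^0}\lesssim\eps^\beta$ with $\cO(\eps^{-d})$ parameters, and the number of monomials per patch is exactly $P(d,\beta)=\binom{d+\floor{\beta}}{d}$. Next I would discretise the two integrals over $[0,1]^d$ by a product quadrature on a grid of mesh $\asymp\eps$, introducing $\cO(\eps^{-d})$ nodes $u_j$ and turning $f^*$ into a ratio of finite sums $\sum_jg^*(u_j)K_\sigma(y,u_j)$ over $\sum_jK_\sigma(y,u_j)$. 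Finally I would assemble subnetworks for the quadratic forms $\|y-\tilde g(u_j)\|^2$ (handled by a polynomial/product gadget), for $t\mapsto e^{-t}$ on the relevant bounded range, for the sums, and for the reciprocal $x\mapsto 1/x$ on an interval bounded away from zero, compose them, and read off $L,W,S,B$ by adding the subnetwork budgets.

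The heart of the argument — and the step I expect to be the main obstacle — is controlling $\max_{|\bk|\le m}\|\partial^{\bk}(\bar s_l-s^*_l)\|_{L^2(\sfp^*)}$ \emph{simultaneously} over all $|\bk|\le m$. Because $\gelu$ is smooth, a $\gelu$-network approximating a smooth target in $C^0$ can be arranged to approximate it in $C^m$ as well, but differentiating a depth-$L$ composition $m$ times via the Fa\`{a} di Bruno formula produces $e^{\cO(m\log m)}$ terms and multiplies the per-layer errors; this is the source of the factors $e^{\cO(|\bk|\log|\bk|)}$ and of the logarithmic losses in the bound. The $\sigma^{-2|\bk|}$-type growth comes from differentiating the Gaussian kernel $K_\sigma$, and further powers of $\sigma^{-1}$ accrue from the reciprocal block: the $k$-th derivative of $1/x$ behaves like $x^{-k-1}$, so one must first establish a uniform lower bound $\int_{[0,1]^d}K_\sigma(y,u)\,\dd u\gtrsim(\text{a power of }\sigma)$ on the region carrying essentially all the mass of $\sfp^*$ — using $\|g^*\|_{L^\infty([0,1]^d)}\le1$, $\sigma\in[0,1)$ and Gaussian tail estimates — and this is precisely what forces the two smallness conditions on $\eps$ in the hypotheses (they guarantee that the error of $\tilde g$ does not drop the approximate denominator below that threshold). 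Once the per-block $C^m$ errors are in hand, telescoping over the $\cO(\eps^{-d})$ nodes and the $L$ layers via the triangle inequality, followed by integration against $\sfp^*$ with its polynomial-in-$D$ constants tracked explicitly (in contrast to \cite{oko2023diffusion,tang2024adaptivity}), yields $(i)$; claim $(ii)$ — a uniform bound on the derivatives of the finite network $\bar f$ in terms of its width, depth and weight magnitude — and the stated sizes of $L,W,S,B$ are routine by-products of the same construction.
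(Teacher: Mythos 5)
You are right that this theorem is not proved in the present paper: it is imported verbatim from \cite[Theorem 3.2]{yakovlev2025simultaneous} and used as a black box in the proofs of Theorems \ref{th:estimation} and \ref{th:estimation_dsm}, so correctly identifying it as a cited external result is all that is required here. Your sketched construction (local Taylor polynomials with $P(d,\beta)$ monomials per patch, quadrature over $[0,1]^d$, composition of exponential, sum and reciprocal gadgets, and simultaneous $C^m$ error propagation) is a plausible account of how such a result is obtained, but there is nothing in this paper's text to compare it against.
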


\begin{Lem}[\cite{yakovlev2025generalization}, Lemma 4.1]
\label{lem:score_derivatives}
    Grant Assumption \ref{asn:relax_man}.
    Then, for all $k \in \N$, it holds that
    \begin{align*}
        \left\|\nabla^k\left( \log \sfp^*(y) - \frac{\|y\|^2}{2\sigma^2} \right)\right\| \leq \frac{2^{k - 1}(k - 1)!}{\sigma^{2k}} \max_{u \in [0, 1]^d}\|g^*(u)\|^k ,
    \end{align*}
    where $\|\cdot\|$ denotes the operator norm of a tensor $\cT$ of order $k$, defined as
    \begin{align*}
        \|\cT\| = \sup_{\|u_1\| = \|u_2\| = \dots = \|u_k\| = 1}\left\{\sum_{i_1, \dots, i_k} \cT_{i_1, \dots, i_k}u_{1, i_1}u_{2, i_2} \cdot\cdot\cdot u_{k, i_k}  \right\} .
    \end{align*}
    For a sufficiently smooth function $f : \R^D \to \R$, we define its $k$-th order derivative tensor $\nabla^k f$ componentwise as
    \begin{align*}
        (\nabla^k f(y))_{i_1, \dots, i_k} = \partial^{(i_1, \dots, i_k)}f(y),
        \quad
        (i_1, \dots, i_k) \in \{1, \dots, D\}^k .
    \end{align*}
\end{Lem}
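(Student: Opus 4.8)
The plan is to exploit the exponential‑family structure of $\sfp^*$ that already underlies Section~\ref{sec:estimation_proof}. Under Assumption~\ref{asn:relax_man},
\[
    \sfp^*(y) = (2\pi\sigma^2)^{-D/2}\,e^{-\|y\|^2/(2\sigma^2)}\,M(y),
    \qquad
    M(y) = \integral{[0,1]^d} \exp\!\left\{ \frac{\langle y, g^*(u)\rangle}{\sigma^2} - \frac{\|g^*(u)\|^2}{2\sigma^2} \right\} \dd u,
\]
so that $\psi(y):=\log M(y)$ is real‑analytic (the logarithm of a positive real‑analytic function, so differentiation under the integral sign is legitimate), and $\log\sfp^*(y)$ equals $\mathrm{const}-\|y\|^2/(2\sigma^2)+\psi(y)$. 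Since any quadratic is annihilated by $\nabla^k$ for $k\ge 3$, one has $\nabla^k\big(\log\sfp^*(y)-\|y\|^2/(2\sigma^2)\big)=\nabla^k\psi(y)$ for all $k\ge 3$ — the regime used below, as the applications invoke the lemma with $k=\alpha+1\ge 3$ — and the cases $k\in\{1,2\}$ follow in the same way from the first two cumulant tensors of $g^*(U)$. Writing $\nu$ for the probability measure on $[0,1]^d$ with $\dd\nu(u)\propto e^{-\|g^*(u)\|^2/(2\sigma^2)}\,\dd u$, we have $\psi(y)=\Lambda(y/\sigma^2)$ up to an additive constant, where $\Lambda(t)=\log\integral{[0,1]^d} e^{\langle t, g^*(u)\rangle}\,\dd\nu(u)$ is the cumulant generating function of $g^*(U)$, $U\sim\nu$; hence $\nabla^k\psi(y)=\sigma^{-2k}(\nabla^k\Lambda)(y/\sigma^2)$, and it suffices to show $\|\nabla^k\Lambda(t)\|\le 2^{k-1}(k-1)!\,G^k$ for all $t\in\R^D$, where $G:=\max_{u\in[0,1]^d}\|g^*(u)\|$.

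The next step is to collapse the tensor norm to a scalar cumulant bound. The tensor $\cT=\nabla^k\Lambda(t)$ is symmetric, so by the classical identity that on a Hilbert space a symmetric $k$‑linear form and its associated homogeneous polynomial share the same operator norm, $\|\cT\|=\sup_{\|v\|=1}\big|\cT[v,\dots,v]\big|$; moreover $\cT[v,\dots,v]$ is exactly the $k$‑th derivative at $s=0$ of $s\mapsto\Lambda(t+sv)$. Since $\Lambda(t+sv)=\mathrm{const}_t+\log\E_{\nu_t}\exp\{s\langle v, g^*(U)\rangle\}$ with $\dd\nu_t(u)\propto e^{\langle t, g^*(u)\rangle}\,\dd\nu(u)$ the exponentially tilted law, this derivative equals the $k$‑th cumulant $\kappa_k\big(\langle v, g^*(U)\rangle\big)$ of the scalar random variable $\langle v, g^*(U)\rangle$ under $\nu_t$. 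For a unit vector $v$ this variable takes values in $[-G,G]$ almost surely, so the whole problem reduces to a one‑dimensional cumulant estimate.

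Finally I would prove the univariate bound: if $|Z|\le R$ almost surely then $|\kappa_k(Z)|\le 2^{k-1}(k-1)!\,R^k$ for every $k\ge 1$. This follows from the moment–cumulant recursion $\kappa_k=\mu_k-\sum_{i=1}^{k-1}\binom{k-1}{i-1}\kappa_i\,\mu_{k-i}$ together with $|\mu_j|=|\E Z^j|\le R^j$: for $k=1$, $|\kappa_1|=|\E Z|\le R$; for $k=2$, $|\kappa_2|=\Var(Z)\le\E Z^2\le R^2$; and for $k\ge 3$ the recursion and the inductive hypothesis yield $|\kappa_k|\le R^k\big(1+(k-1)!\sum_{\ell=1}^{k-1}2^{\,k-1-\ell}/\ell!\big)\le R^k\big(1+(e^{1/2}-1)\,2^{k-1}(k-1)!\big)\le 2^{k-1}(k-1)!\,R^k$, the last inequality using $2^{k-1}(k-1)!\ge 8$ and $(e^{1/2}-1)+\tfrac18<1$. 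Applying this with $Z=\langle v, g^*(U)\rangle$ and $R=G$ gives $\|\nabla^k\Lambda(t)\|\le 2^{k-1}(k-1)!\,G^k$, and therefore $\|\nabla^k(\log\sfp^*(y)-\|y\|^2/(2\sigma^2))\|=\|\nabla^k\psi(y)\|=\sigma^{-2k}\|\nabla^k\Lambda(y/\sigma^2)\|\le 2^{k-1}(k-1)!\,\sigma^{-2k}\max_{u\in[0,1]^d}\|g^*(u)\|^k$, which is the asserted estimate. I expect the genuine obstacle to be the tensor‑to‑scalar reduction: one must recognize that the directional $k$‑th derivative of the log‑partition function is a true cumulant of the tilted law rather than a moment, invoke the Banach identity so that probing a symmetric tensor along a single repeated unit direction already captures its operator norm, and then handle the combinatorics carefully enough to land the clean constant $2^{k-1}(k-1)!$ instead of a crude $(k!)^{\cO(1)}$.
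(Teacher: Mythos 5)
Your proposal is mathematically sound, but note that this paper does not prove Lemma \ref{lem:score_derivatives} at all: it is imported verbatim as Lemma 4.1 of the cited reference \citep{yakovlev2025generalization}, so there is no in-document proof to compare against. Judged on its own terms, your argument is a correct, self-contained derivation. The exponential-family rewriting is right: $\log\sfp^*(y)-\|y\|^2/(2\sigma^2)$ equals, up to an additive constant, $\Lambda(y/\sigma^2)$ with $\Lambda$ the cumulant generating function of $g^*(U)$ under the tilted (equivalently, posterior) law, so $\nabla^k$ of the left-hand side is $\sigma^{-2k}(\nabla^k\Lambda)(y/\sigma^2)$ for every $k\geq 1$ (your aside about quadratics and $k\geq 3$ is unnecessary, since the quadratic is subtracted exactly). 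The reduction of the tensor norm to the diagonal is the one genuinely nonstandard ingredient: you are invoking Banach's theorem that a continuous symmetric $k$-linear form on a Hilbert space has the same norm as its associated homogeneous polynomial, which is true but should be cited explicitly rather than called "classical" in passing. The identification of the $k$-th directional derivative of $\Lambda$ with the $k$-th cumulant of $\langle v,g^*(U)\rangle$ under the exponentially tilted measure is correct, and your induction via the moment--cumulant recursion checks out numerically: with $|\mu_j|\leq R^j$ one gets $|\kappa_k|\leq R^k\bigl(1+(e^{1/2}-1)2^{k-1}(k-1)!\bigr)$, and since $2^{k-1}(k-1)!\geq 8$ for $k\geq 3$ and $\tfrac18+e^{1/2}-1<1$, the constant $2^{k-1}(k-1)!$ closes the induction, with $k=1,2$ handled directly. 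The only polish I would ask for is a short justification of differentiation under the integral sign (immediate here, since $g^*$ is bounded and the domain is compact) and an explicit reference for the symmetric-multilinear-norm identity.
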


\begin{Lem}
\label{lem:max_psi_norm}
    Let $\xi_1, \dots, \xi_n$ be identically distributed random variables with $\|\xi_1\|_{\psi_1} < \infty$ and $n \in \N$.
    Then it holds that
    \begin{align*}
        \left\| \max_{1 \leq i \leq n} |\xi_i| \right\|_{\psi_1} \leq \|\xi_1\|_{\psi_1} \log_2(2n) .
    \end{align*}
    
\end{Lem}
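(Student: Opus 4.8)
The plan is to work directly from the definition of the $\psi_1$-norm via a comparison of moment generating functions, and the whole point will be to choose the normalizing constant cleverly. First I would fix any $t > \|\xi_1\|_{\psi_1}$, so that $\E\exp(|\xi_1|/t) \leq 2$; by monotone convergence the same will hold in the limit $t \downarrow \|\xi_1\|_{\psi_1}$, and since the $\xi_i$ are identically distributed we get $\E\exp(|\xi_i|/t) \leq 2$ for every $i$. Writing $M = \max_{1 \leq i \leq n}|\xi_i|$, the goal is then to exhibit an explicit $s > 0$ with $\E\exp(M/s) \leq 2$, which gives $\|M\|_{\psi_1} \leq s$.

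The key trick I would use is to raise to the $k$-th power before invoking the union bound and take the $k$-th root afterwards. Concretely, set $k = \log_2(2n)$ (note $k \geq 1$ since $n \geq 1$) and $s = kt$. Using monotonicity of $x \mapsto x^{1/k}$ together with nonnegativity of the summands,
\[
    \exp(M/s) = \max_{1 \leq i \leq n}\big(\exp(k|\xi_i|/s)\big)^{1/k} = \Big(\max_{1 \leq i \leq n}\exp(k|\xi_i|/s)\Big)^{1/k} \leq \Big(\sum_{i = 1}^n \exp(k|\xi_i|/s)\Big)^{1/k}.
\]
Then, since $k \geq 1$, the map $x \mapsto x^{1/k}$ is concave on $[0,\infty)$, so Jensen's inequality pushes the expectation inside:
\[
    \E\exp(M/s) \leq \Big(\sum_{i = 1}^n \E\exp(k|\xi_i|/s)\Big)^{1/k} = \big(n\,\E\exp(|\xi_1|/t)\big)^{1/k} \leq (2n)^{1/k} = 2,
\]
where the middle equality uses $k|\xi_i|/s = |\xi_i|/t$ and identical distribution, and the last step uses the choice $k = \log_2(2n)$. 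Hence $\|M\|_{\psi_1} \leq s = t\,\log_2(2n)$, and letting $t \downarrow \|\xi_1\|_{\psi_1}$ yields the claim.

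I do not expect a real obstacle here; the only thing that requires care is the bookkeeping of the exponent, namely choosing $k = \log_2(2n)$ so that the factor $n$ lost in the union bound (in fact a factor $2n$, after bounding $\E\exp(|\xi_1|/t) \leq 2$) is exactly cancelled by the $k$-th root. It is worth emphasizing in the write-up that $k$ need not be an integer for the argument to go through, and to record the trivial edge case $n = 1$ (where $k = 1$, $s = t$, and the inequality is immediate).
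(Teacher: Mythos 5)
Your proof is correct and follows essentially the same route as the paper: bound the maximum by the sum, exploit concavity of $x \mapsto x^{1/\log_2(2n)}$ (the paper phrases this as H\"older's inequality applied to $\exp\{M/\|\xi_1\|_{\psi_1}\}$), and use the calibration $(2n)^{1/\log_2(2n)} = 2$. The only difference is cosmetic — you take the $k$-th root pointwise before applying Jensen, and you handle the attainment of the infimum in the $\psi_1$-definition via the limit $t \downarrow \|\xi_1\|_{\psi_1}$, a minor point the paper passes over silently.
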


\begin{proof}
The H\"older inequality implies that
\begin{align*}
    \E \exp\left\{\frac{\max_{1 \leq i \leq n}|\xi_i|}{\|\xi_1\|_{\psi_1} \log_2(2n)}\right\}
    &
    \leq \left(\E \exp\left\{ \max\limits_{1 \leq i \leq n}|\xi_i| \big\slash \|\xi_1\|_{\psi_1} \right\} \right)^{1 / \log_2(2n)}
    \\&
    \leq \left( \sum\limits_{1 \leq i \leq n} \E \exp\left\{ |\xi_i| \big\slash \|\xi_1\|_{\psi_1} \right\} \right)^{1 / \log_2(2n)}
    \\&
    \leq \left( n \; \E \exp\left\{ |\xi_1| \big\slash \|\xi_1\|_{\psi_1} \right\} \right)^{1 / \log_2(2n)} .
\end{align*}
By definition of the $\psi_1$-norm, we have that
\begin{align*}
    \E \exp\left\{ \max\limits_{1 \leq i \leq n} |\xi_i| \big\slash \big( \|\xi_1\|_{\psi_1} \log_2(2n) \big) \right\}
    \leq (2n)^{1 / \log_2(2n)} = 2.
\end{align*}
Therefore, the claim follows.

\end{proof}

\newpage
\tableofcontents

\end{document}